\documentclass[12pt]{article}

\usepackage{amssymb}
\usepackage{graphics}

%\documentstyle[12pt]{article}

%'s definition

% Page format

\oddsidemargin 0mm
\evensidemargin 0mm
\topmargin 6mm
\textheight = 40\baselineskip
\textwidth 165mm

% Proofs
\newcommand{\qed}{$\;\;\;\Box$}
\newenvironment{proof}{\par\smallbreak{\sl Proof.~}}
{\unskip\nobreak\hfill \qed \par\medbreak}

\newcounter{claim}
\renewcommand{\theclaim}{\arabic{claim}}
\newenvironment{claim}{\refstepcounter{claim}%
\par\medskip\par\noindent{\bf Claim~\theclaim.}\rm}%
{\par\medskip\par}

\newcounter{cclaim}
\renewcommand{\thecclaim}{\arabic{cclaim}}
\newenvironment{cclaim}{\refstepcounter{cclaim}%
\par\medskip\par\noindent{\bf Claim~\thecclaim.}\rm}%
{\par\medskip\par}

\newenvironment{subproof}{\par\noindent{\it Proof of Claim.}}%
{$\Box$\par\smallbreak}
\newcommand{\hide}[1]{}

% Abbreviations

% classical (function) spaces

% number sets

\newcommand{\N}{{\mathbb N}}

\newcommand{\R}{{\mathbb R}}
\newcommand{\C}{{\mathbb C}}
\newcommand{\Z}{{\mathbb Z}}

%  Colombeau spaces

\newcommand{\B}{{\cal B}}
\newcommand{\CC}{{\cal C}}
\newcommand{\G}{{\cal G}}
\newcommand{\F}{{\cal F}}

\newcommand{\LL}{{\cal L}}

\newcommand{\beq}{\begin{equation}}
\newcommand{\ee}{\end{equation}}

% differential operators
\renewcommand{\d}{\partial}

% theorem environments  (theoremstyle if amsthm package used)
%theoremstyle{definition}
\newtheorem{thm}{Theorem}[section]
\newtheorem{lemma}[thm]{Lemma}

\newtheorem{defn}[thm]{Definition}

\newtheorem{rem}[thm]{Remark}

%\newcommand{\CC}{{\cal C}}

% ancient greeks
\newcommand{\al}{\alpha}
\newcommand{\be}{\beta}
\newcommand{\ga}{\gamma}
\newcommand{\de}{\delta}
\newcommand{\eps}{\varepsilon}
\newcommand{\vphi}{\varphi}
\newcommand{\la}{\lambda}
\newcommand{\om}{\omega}

\newcommand{\reff}[1]{(\ref{#1})}

\newcommand{\im}{\mathop{\rm im}}

\setcounter{page}{1}

\renewcommand{\Im}{\mathop{\mathrm{Im}}\nolimits}
\renewcommand{\Re}{\mathop{\mathrm{Re}}\nolimits}

\title{Periodic Solutions to Dissipative Hyperbolic Systems. II:
Hopf Bifurcation for Semilinear Problems
} 

\newcounter{thesame}
\setcounter{thesame}{1}
\author{
I.~Kmit
\ \ \ L.~Recke\\
{\small
Institute of Mathematics, Humboldt University of Berlin,}
\\
{\small Rudower Chaussee 25,
\small D-12489 Berlin, Germany and
}
\\
{\small
Institute for Applied Problems of Mechanics and Mathematics,
}
\\
{\small
Ukrainian Academy of Sciences,
Naukova St.\ 3b,
79060 Lviv, Ukraine
}
\\
{\small   E-mail:
{\tt kmit@informatik.hu-berlin.de}}\\[5mm]
{\small
Institute of Mathematics, Humboldt University of Berlin,}\\
{\small 
Rudower Chaussee 25,
D-12489 Berlin, Germany}\\
{\small   E-mail:
{\tt recke@mathematik.hu-berlin.de}}
}

\date{}

\begin{document}

\maketitle

\begin{abstract}
\noindent 
We consider boundary value problems for semilinear hyperbolic systems of the type
$$
\partial_tu_j  + a_j(x,\la)\partial_xu_j + b_j(x,\la,u)  = 0, \; x\in(0,1), \;j=1,\dots,n
$$
with smooth coefficient functions $a_j$ 
and $b_j$
such that 
$b_j(x,\la,0) = 0$ for all $x \in [0,1]$, $\la \in \R$, and $j=1,\ldots,n$.
We state conditions for Hopf bifurcation, i.e.,
for existence, local uniqueness (up to phase shifts), smoothness and smooth dependence
on $\la$
of time-periodic solutions bifurcating from the zero stationary solution. Furthermore,
we derive a formula which determines the bifurcation direction.

The proof is done by means of a Liapunov-Schmidt reduction procedure. 
For this purpose, Fredholm properties of the linearized 
system and implicit function
theorem techniques are used. 

There are at least two distinguishing features of Hopf bifurcation theorems for hyperbolic PDEs in comparison with those for parabolic PDEs or for ODEs:
First, the question if a non-degenerate time-periodic solution depends smoothly on the system parameters 
is much more delicate. And second, 
a sufficient amount of dissipativity is needed in the system, and  a priori
it is not clear how to verify this in terms of the data of the PDEs and of the boundary conditions. 
\end{abstract}

\emph{Key words:} first-order hyperbolic systems, time-periodic solutions, 
reflection boundary conditions, Liapunov-Schmidt procedure,
implicit function theorem, fiber contraction principle.

\emph{Mathematics Subject Classification:} 35B10, 35B32, 35L50, 35L60

\section{Introduction}\label{sec:intr}
\renewcommand{\theequation}{{\thesection}.\arabic{equation}}
\setcounter{equation}{0}

\subsection{Problem and Main Result}\label{sec:results}

This paper concerns  Hopf bifurcation for hyperbolic systems of semilinear first-order PDEs in one space dimension of the type 
\beq\label{eq:1.1}
\om\partial_tu_j  + a_j(x,\la)\partial_xu_j + b_j(x,\la,u)  = 0,\quad x\in(0,1), \;j=1,\dots,n
\ee
with reflection boundary conditions
\beq\label{eq:1.2}
\begin{array}{l}
\displaystyle
u_j(0,t) = \sum\limits_{k=m+1}^nr_{jk}u_k(0,t),\quad  j=1,\ldots,m,\\
\displaystyle
u_j(1,t) = \sum\limits_{k=1}^mr_{jk}u_k(1,t), \quad  j=m+1,\ldots,n
\end{array}
\ee
and time-periodicity conditions
\beq\label{eq:1.3}
u_j(x,t+2\pi) = u_j(x,t),\quad x\in[0,1],\;j=1,\dots,n.
\ee
Our main result (Theorem \ref{thm:hopf} below) is quite similar to Hopf bifurcation theorems 
for parabolic PDEs or for ODEs. But 
there are several distinguishing features of the proofs of Hopf bifurcation theorems 
for hyperbolic PDEs in comparison with those for parabolic PDEs or for ODEs.
First, the question of Fredholm solvability of the linearized problem
(in appropriate spaces of time-periodic functions) is essentially more difficult.
Second, the question if a non-degenerate time-periodic solution of the nonlinear problem 
depends smoothly on the system parameters 
is much more delicate. And third, 
a sufficient amount of dissipativity is needed in order to prevent  small
divisors from coming up, and we present an explicit sufficient condition \reff{Fred}
for that in terms of the data of the PDEs and of the boundary conditions. 

For problem  \reff{eq:1.1}--\reff{eq:1.3}, suppose that 
$m< n$ are positive integers
and $r_{jk} \in \R$. Moreover, 
\beq\label{smooth}
a_j : [0,1]\times\R\to\R \mbox{ and }  b_j : [0,1]\times\R\times\R^n\to\R \mbox{ are } C^\infty\mbox{-smooth}
 \mbox{ for all }   j=1,\dots,n,
\ee
\beq \label{hyp0}
a_j(x,0)\ne 0  \mbox{ for all }  x\in[0,1] \mbox{ and } j=1,\dots,n,
\ee 
\beq \label{hyp}
a_j(x,0)\ne a_k(x,0) \mbox{ for all } x\in[0,1]  \mbox{ and } 1\le j\ne k\le n.
\ee
The number $\om>0$ and the function $u=(u_1,\dots,u_n): [0,1]\times\R\to\R^n$ are the state parameters to be determined,
and  $\la\in\R$ is the bifurcation parameter. Speaking about solutions to  (\ref{eq:1.1})--(\ref{eq:1.3}), throughout 
the paper we mean classical solutions, i.e., $C^1$-smooth maps $u: [0,1]\times\R\to\R^n$ which satisfy  (\ref{eq:1.1})--(\ref{eq:1.3})
pointwise.

If, for given $\la$ and $\om$, $u$ is a solution to (\ref{eq:1.1})--(\ref{eq:1.3}) and  $U : [0,1]\times\R\to\R^n$ 
is defined as $U(x,t):= u(x,\om t)$, then
\beq\label{U}
\partial_tU_j  + a_j(x,\la)\partial_xU_j + b_j(x,\la,U)  = 0,\quad x\in(0,1),\;j=1,\dots,n,
\ee
and $U$ is $2\pi/\om$-periodic with respect to time; and vice versa. In other words: Solutions to (\ref{eq:1.1})--(\ref{eq:1.3})
correspond to  $2\pi/\om$-periodic solutions to (\ref{U}) (which satisfy the boundary conditions (\ref{eq:1.2})).

We suppose that for all  $\la$ and $\om$  the function $u=0$ is a solution 
(the so-called trivial solution)
to (\ref{eq:1.1})--(\ref{eq:1.3}), i.e., 
\beq\label{u=0}
b_j(x,\la,0)=0  \mbox{ for all } x\in[0,1], \la\in\R, \mbox{ and } j=1,\dots,n.
\ee
We are going to describe families of non-stationary solutions to (\ref{eq:1.1})--(\ref{eq:1.3}) bifurcating from
the family
of trivial solutions.
With this aim  we consider the following eigenvalue problem for the linearization of (\ref{eq:1.1})--(\ref{eq:1.3})
at the trivial solution
\beq\label{evp}
\begin{array}{rl}
\displaystyle
a_j(x,\la)\frac{d}{dx}v_j + \sum_{k=1}^n\d_{u_k}b_j(x,\la,0)v_k  = \mu v_j, &x\in(0,1),\;j=1,\dots,n\\
\displaystyle
v_j(0) = \sum\limits_{k=m+1}^nr_{jk}v_k(0),& j=1,\ldots,m,\\
\displaystyle
v_j(1) = \sum\limits_{k=1}^mr_{jk}v_k(1), & j=m+1,\ldots,n
\end{array}
\ee
and the corresponding adjoint eigenvalue problem 
\beq\label{adevp}
-\frac{d}{dx}\left(a_j(x,\la)w_j\right) + \sum_{k=1}^n\d_{u_j}b_k(x,\la,0)w_k  = \nu w_j,\quad x\in(0,1),\;j=1,\dots,n,\\
\ee
\beq
\label{adevpbc}
\begin{array}{lll}
\displaystyle
a_j(0,\la)w_j(0) = -\sum\limits_{k=1}^mr_{kj}a_k(0,\la)w_k(0),\; j=m+1,\ldots,n,\\
\displaystyle
\displaystyle
a_j(1,\la)w_j(1) = -\sum\limits_{k=m+1}^nr_{kj}a_k(1,\la)w_k(1), \;  j=1,\ldots,m.
\end{array}
\ee
Here $\mu,\nu\in\C$ are the eigenvalues and $v=(v_1,\dots,v_n), w=(w_1,\dots,w_n) : [0,1]\to\C^n$
are the  corresponding eigenfunctions.

Let us formulate our assumptions which are analogous to corresponding assumptions in Hopf bifurcation for ODEs
(see, e.g., \cite{Andronov,ChowHale,Hopf}) and for parabolic PDEs (see, e.g., \cite{CrandRab,Haragus,Henry,Ki}).

The first assumption states that there exists a pure imaginary pair of geometrically simple eigenvalues to 
(\ref{evp}) with $\la=0$:
\beq \label{geo}
\left.
\begin{array}{l}
\mbox{For } \la=0 \mbox{ and } \mu=i \mbox{ there exists exactly one (up
to linear dependence)}\\
\mbox{solution } v\ne 0 \mbox{ to } (\ref{evp}).
\end{array}
\right\}
\ee

The second assumption states that the eigenvalues $\mu=\pm i$ to (\ref{evp}) with
$\la=0$ are algebraically simple:
\beq \label{alg}
\left.
\begin{array}{l}
\mbox{For any solution } v\ne 0 \mbox{ to } (\ref{evp})  \mbox{ with } \la=0 \mbox{ and } \mu=i
\mbox{ and }\\ \mbox{for any solution  }
\displaystyle w\ne 0 \mbox{ to }  (\ref{adevp})-\reff{adevpbc} \mbox{ with }  \la=0 \mbox{ and }
\nu=-i \\\mbox{we have  }
\displaystyle\sum\limits_{j=1}^n\int_0^1v_j(x)\overline{w_j(x)}\,dx\ne 0.
\end{array}
\right\}
\ee
In what follows, we fix a solution $v=v^0$ to (\ref{evp}) with $\la=0$ and $\mu=i$ and a solution  
$w=w^0$ to (\ref{adevp})--\reff{adevpbc} with  $\la=0$ and $\mu=-i$ such that
\beq \label{normed}
\sum\limits_{j=1}^n\int_0^1v_j^0(x)\overline{w_j^0(x)}\,dx=2.
\ee

The third assumption is the so-called transversality condition. It states that the eigenvalue $\mu=\mu(\la) \approx i$
to \reff{evp} with $\la \approx 0$ crosses the imaginary axis transversally
if $\la$ crosses zero:
\beq \label{eq:IV}
\al:=\frac{1}{2}\Re\sum\limits_{j=1}^n\int_0^1\left(\d_\la a_j(x,0)\frac{d}{dx}v_j^0(x)
+\sum\limits_{k=1}^n\d_\la \d_{u_k}b_{j}(x,0,0)v_k^0(x)\right)
\overline{w_j^0(x)}\,dx\ne 0
\ee
(in fact it holds $\Re \mu'(0)=\al$, cf. \reff{trans}).

The fourth assumption is the so-called nonresonance condition:
\beq \label{nonres}
\mbox{If }  (\mu,v) \mbox{ is a solution to } (\ref{evp})  \mbox{ with } \la=0, \, \mu=ik, \, k \in \Z, 
\mbox{ and } v \not= 0, \mbox{ then } k=\pm 1.
\ee
In order to formulate our main result we need some more notation: 
\beq
\label{bb}
b_{jk}(x) := \partial_{u_k}b_j(x,0,0), \; b_{jkl}(x) := \partial^2_{u_ku_l}b_j(x,0,0), \;
b_{jklr}(x) := \partial^3_{u_k u_l u_r}b_j(x,0,0),
\ee
\begin{eqnarray}
\label{Rdef0}
&&R_0:=\max_{1\le j \le m} \;\max_{0 \le x \le 1} 
\sum_{k=m+1}^n|r_{jk}| \;\exp \left(-\int_0^x\frac{b_{jj}(\xi)}{a_j(\xi,0)} d\xi\right),\\
\label{Rdef1}
&&R_1:=\max_{m+1 \le j \le n} \;\max_{0 \le x \le 1} 
\sum_{k=1}^m|r_{jk}|\;\exp \int_x^1\frac{b_{jj}(\xi)}{a_j(\xi,0)} d\xi,
\end{eqnarray}
and 
\beq\label{coef}
\beta:=
-\mbox{Re} \int_0^1 \left(\frac{1}{8}\sum_{j,k,l,r=1}^n b_{jklr}v_k^0 v_l^0 \overline{v_r^0}\, \overline{w_j^0}
+\sum_{j,k,l=1}^n b_{jkl} \left(y_k\overline{v_l^0} + z_kv_l^0\right) \overline{w_j^0}\right)dx.
\ee
Here $y:[0,1]\rightarrow \mathbb{C}^n$ and  $z:[0,1]\rightarrow \mathbb{R}^n$ are the solutions to the boundary value problems
$$
\begin{array}{rcll}
\displaystyle
a_j(x,0)\frac{d}{dx} y_j - 2iy_j + \sum_{k=1}^n b_{jk}(x)y_k&=& 
\displaystyle-\frac{1}{4}\sum_{k,l=1}^n b_{jkl}(x)v_k^0(x)v_l^0(x),
&j=1,\dots,n \\
\displaystyle
 y_j(0) &=& 
\displaystyle\sum_{k=m+1}^n r_{jk} y_k(0), & j=1,\dots,m, \\
\displaystyle
y_j(1) &=& 
\displaystyle\sum_{k=1}^m r_{jk}y_k(1), & j=m+1,\dots,n
\end{array}
$$
and
$$
\begin{array}{rcll}
\displaystyle
a_j(x,0)\frac{d}{dx} z_j +\sum_{k=1}^n b_{jk}(x)z_k
&=& 
\displaystyle  -\frac{1}{2}\sum_{k,l=1}^n b_{jkl}(x)v_k^0(x) \overline{v_l^0(x)}, &j=1,\dots,n \\
\displaystyle
 z_j(0) &=& \displaystyle \sum_{k=m+1}^n r_{jk} z_k(0), & j=1,\dots,m, \\
\displaystyle
z_j(1) &=& \displaystyle\sum_{k=1}^m r_{jk} z_k(1), & j=m+1,\dots,n.
\end{array}
$$
Remark that the two boundary value problems above are uniquely solvable due to assumption~\reff{nonres}.

\begin{defn}\label{Cspaces}
We denote by $\CC_n$ the space of all continuous maps $u:[0,1] \times \R \to \R^n$, which satisfy the 
time-periodicity condition \reff{eq:1.3}, with the norm 
$$
\|u\|_\infty:=\max_{1 \le j \le n} \max_{0 \le x \le 1} \max_{t \in \R}|u_j(x,t)|.
$$
\end{defn}

Now we are prepared to formulate our main result:
\begin{thm}
\label{thm:hopf}
Suppose \reff{smooth}--\reff{hyp}, \reff{u=0},  \reff{geo}--\reff{nonres}, and
\beq
\label{Fred}
R_0 R_1 <1.
\ee 
Then there exist $\eps_0>0$ and $C^2$-maps
$\hat\la : [0,\eps_0]\to\R$, $\hat\om : [0,\eps_0]\to\R$, and
$\hat u : [0,\eps_0]\to \CC_n$  such that the following is true:

(i) {\rm Existence of nontrivial solutions:} For all $\eps\in (0,\eps_0]$ the function $\hat u(\eps)$
is a $C^\infty$-smooth nontrivial solution  to (\ref{eq:1.1})--(\ref{eq:1.3})
with  $\la=\hat \la(\eps)$ and $\om=\hat \om(\eps)$.

(ii)  {\rm Asymptotic expansion and bifurcation direction:} We have 
$$
\hat\la(0)= \hat\la'(0)=0,\;  \hat\la''(0)=\frac{\be}{\al},\;  \hat\om(0)=1,
$$
and
$
\hat u(0)(x,t)=0, \;  \hat u'(0)(x,t)=\Re v^0(x)\cos t - \Im v^0(x)\sin t \mbox{ for all } x \in [0,1] \mbox{ and } t \in \R.
$

(iii) {\rm Local uniqueness:} There exists $\de>0$ such that for all 
nontrivial solutions  to (\ref{eq:1.1})--(\ref{eq:1.3}) with 
$ |\la|+|\om-1| + \|u\|_{\infty}<\de$
there exist $\eps\in (0,\eps_0]$ and $\vphi\in\R$ such that 
$\la=\hat\la(\eps)$,  $\om=\hat\om(\eps)$, and $u(x,t)=\hat u(\eps)(x,t+\vphi)$ for all $x\in[0,1]$ and $t\in\R$.
\end{thm}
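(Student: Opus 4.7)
The plan is to recast \reff{eq:1.1}--\reff{eq:1.3} as a single abstract equation $F(\eps,\la,\om,U)=0$ in the space $\CC_n$ (or a suitable subspace of it) through the scaling ansatz $u(x,t)=\eps U(x,t)$. Dividing by $\eps$, the problem becomes
$$
\om\d_tU_j + a_j(x,\la)\d_x U_j + \sum_{k=1}^nb_{jk}(x)U_k + \eps\, G_j(\eps,\la,U)=0,
$$
with the boundary and $2\pi$-time-periodicity conditions, where the remainder $G_j$ is smooth in all arguments and accounts for the $\la$-dependence of the $a_j,b_{jk}$ and for the quadratic and higher nonlinear terms. At the bifurcation point $(\eps,\la,\om)=(0,0,1)$ this reduces to the linearized operator $L_0$, whose kernel by \reff{geo} and \reff{nonres} is spanned over $\R$ by $U^0(x,t):=\Re v^0(x)\cos t -\Im v^0(x)\sin t$ and its $\pi/2$-time-shift.

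First I would establish that $L_0:\CC_n^1\to \CC_n$ is a Fredholm operator of index zero. Here the strict hyperbolicity \reff{hyp}--\reff{hyp0} permits integration along characteristics; the reflection boundary conditions \reff{eq:1.2} then yield a fixed-point equation of the form $\xi=T\xi+\eta$ for the boundary traces, where $T$ is a composition of transport-then-reflect operators. The sub-multiplicative structure of $T$ shows that $\|T\|\le R_0R_1$ in the relevant sup-norm, so the assumption \reff{Fred} makes $I-T$ a contraction and hence invertible; this is precisely the quantitative dissipativity that lifts $L_0$ to a Fredholm operator. Its cokernel is paired with $\ker L_0$ via the adjoint problem \reff{adevp}--\reff{adevpbc}, and \reff{alg} (normalized by \reff{normed}) shows that this pairing is non-degenerate.

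Next I would perform a Liapunov--Schmidt reduction. Write $U=\eta_1 U^0+\eta_2 \tilde U^0 + U_1$ where $\tilde U^0$ is the $\pi/2$-shift of $U^0$ and $U_1$ is required to be orthogonal to $\ker L_0$ in the pairing with $w^0$. To kill the phase indeterminacy \emph{inside} the reduction, I would impose the phase condition $\eta_2=0$ and the amplitude normalization $\eta_1=1$, absorbing $\vphi$ into the parametrization of the one-parameter family. The range equation then becomes a fixed-point problem for $U_1$ in a closed subspace of $\CC_n$, and the bounded invertibility of the restriction of $L_0$ established above allows the implicit function theorem to produce a smooth $U_1=U_1(\eps,\la,\om)$ near $(0,0,1)$ with $U_1=O(\eps)$. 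Substituting back gives two real bifurcation equations $\Phi_j(\eps,\la,\om)=0$, $j=1,2$, obtained by pairing against $w^0$. Their Taylor expansion is organized so that the leading $\eps$-coefficient of the Jacobian with respect to $(\la,\om)$ equals $2\al$ up to a nontrivial factor, by \reff{eq:IV}; another application of the implicit function theorem then yields $\hat\la(\eps),\hat\om(\eps)$ with $\hat\la(0)=\hat\la'(0)=0$, $\hat\om(0)=1$. The second-order coefficient $\hat\la''(0)=\be/\al$ emerges by expanding $U_1$ to order $\eps^2$: the solvability at order $\eps^2$ forces the auxiliary profiles $y$ and $z$ to satisfy exactly the two boundary value problems displayed before \reff{coef}, and computing the $\eps^3$-coefficient of $\Phi$ and using \reff{normed} produces $\be$ as in \reff{coef}.

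The main obstacle is the regularity part of the theorem: claiming that each $\hat u(\eps)$ is $C^\infty$-smooth and that $\eps\mapsto \hat u(\eps)$ is $C^2$ into $\CC_n$ is genuinely delicate for hyperbolic systems, since the linearized operator loses no derivatives and a naive IFT in $C^k$-spaces is obstructed by compatibility conditions between the PDE, the reflection boundary data, and their derivatives at the corner points $(0,t)$ and $(1,t)$. I would handle this by the fiber contraction principle alluded to in the keywords: after writing the problem in characteristic coordinates and iterating the boundary operator $T$, the derivatives of $U_1$ with respect to $(\eps,\la,\om)$ satisfy an inhomogeneous linear fixed-point equation of the same $I-T$ type, to which the contraction from \reff{Fred} applies as a fibered map over the already-established continuous dependence; induction then gives the required $C^2$ regularity into $\CC_n$. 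The $C^\infty$ smoothness in $(x,t)$ of each $\hat u(\eps)$ itself is a standard bootstrap: the equation \reff{eq:1.1} trades $\d_x$ for $\d_t$ along characteristics, so once $\hat u(\eps)$ is $C^1$ it is automatically smooth. Finally, local uniqueness (iii) follows from the uniqueness statement in both applications of the implicit function theorem, once the phase ambiguity $\vphi$ is accounted for by the phase-fixing condition built into the reduction.
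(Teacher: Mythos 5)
Your overall skeleton (integration along characteristics to get a fixed-point formulation, Liapunov--Schmidt reduction, transversality \reff{eq:IV} for the two-dimensional reduced equation, evaluation of the cubic coefficient via the profiles $y,z$) matches the paper's strategy, but there is a decisive gap at the range equation. You assert that ``the bounded invertibility of the restriction of $L_0$ \dots allows the implicit function theorem to produce a smooth $U_1=U_1(\eps,\la,\om)$''. In the space $\CC_n$ this is precisely what fails: the operators $C(\la,\om)$ and $F(\la,\om,\cdot)$ act by composition with the characteristics $\tau_j(\xi,x,t,\la,\om)$, and for $u$ merely continuous the maps $(\la,\om)\mapsto C(\la,\om)u$ and $(\la,\om)\mapsto F(\la,\om,u)$ are not $C^1$; indeed $(\la,\om)\mapsto I-C(\la,\om)-\d_uF(\la,\om,0)$ is not even continuous in the operator norm of $\LL(\CC_n)$ (shift operators are not norm-continuous in the shift). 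So the classical implicit function theorem gives you nothing beyond fixed $(\la,\om)$, and the obstruction is not the corner-compatibility issue you name but this non-smooth parameter dependence. The paper solves \reff{infinite} by a uniform Banach fixed-point argument and only afterwards recovers $C^2$-dependence on $(\la,\om,v)$ through the fiber contraction principle (Lemma \ref{A2}, used in the long induction of Lemma \ref{laregular}); your later appeal to fiber contraction is in the right spirit, but it cannot repair the earlier IFT step on which your construction and expansion of the branch rest.

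The second gap is quantitative: the bound $\|T\|\le R_0R_1<1$ from \reff{Fred} only inverts $I-C(\la,\om)$. It does not by itself give Fredholmness of $L_0=I-C(0,1)-\d_uF(0,1,0)$, and, more importantly, it does not give the uniform lower bound $\|(I-P)(I-C(\la,\om)-\d_uF(\la,\om,u))w\|_\infty\ge c\,\|w\|_\infty$ on $\ker Q$ for all $(\la,\om,u)$ near $(0,1,0)$, which is exactly what both the fixed-point solution of \reff{infinite} and your fibered iteration for the derivatives require (the paper's Lemma \ref{Magnus}). That estimate is the technical heart of the proof: it uses that $\d_uF(\la,\om,0)$ has no diagonal part and, crucially, the strict hyperbolicity \reff{hyp} ($a_j\ne a_k$), which permits integration by parts along characteristics to show that $\d_uF(\la,\om,0)^2$ and $\d_uF(\la,\om,0)C(\la,\om)$ map $\CC_n$ boundedly into $\CC_n^1$, whence the compactness and contradiction argument; Fredholmness itself also rests on the off-diagonal structure and the results of \cite{KR3}. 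Your sketch never invokes \reff{hyp} and conflates invertibility of $I-C$ with invertibility of the full linearization on the complement of the kernel, so the argument does not close. (Smaller points: ``$C^1$ implies $C^\infty$ by a standard bootstrap'' is not automatic here --- the paper first gets $t$-smoothness of $\hat w$ from the $S_\vphi$-equivariance together with $C^\infty$-smoothness in $v$, and only then $x$-smoothness from the integral equation; and the local uniqueness in (iii) needs the uniqueness of the fixed point in $\B_{\delta_3}\cap\ker Q$ together with the phase symmetry, not merely the two IFT applications.)
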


Our paper is organized as follows:

In Section  \ref{Applications} we comment about mathematical models which contain 
dissipative hyperbolic PDEs and which are used for describing destabilization of stationary states and/or for describing stable time-periodic processes.

In Section  \ref{related} we comment about some publications which are related to ours.

In Section \ref{Function Spaces and Operators} we derive a weak formulation \reff{rep1}, \reff{rep2}  for the PDE problem \reff{eq:1.1}--\reff{eq:1.3}
via integration along characteristics, and we introduce operators in order to 
write this weak formulation as the operator equation \reff{abstract}.

In Section \ref{Lyapunov-Schmidt Procedure} 
we do a Liapunov-Schmidt procedure (as it is known for Hopf bifurcation for parabolic PDEs or for ODEs)  in order to reduce 
(for $\la \approx 0, \om \approx 1$ and $u \approx 0$)
the problem  \reff{eq:1.1}--\reff{eq:1.3} with infinite-dimensional state parameter $(\om,u) \in \R \times \CC_n$ to a problem with two-dimensional state parameter.
Here the main technical results are  Lemmas \ref{Magnus} and \ref{infIFT} about local unique solvability of the infinite dimensional part 
of the Liapunov-Schmidt system and
Lemma \ref{laregular} about smooth dependence of the solution on parameters.
The proofs of these Lemmas are much more complicated than the corresponding proofs for parabolic PDEs or for ODEs.
The point is that  in the case of dissipative hyperbolic 
PDEs the question of Fredholm solvability of linear periodic problems 
as well as the question of
smooth dependence of solutions on parameters are  much more difficult.
The difficulty with the Fredholmness is solved in \cite{KR3}.

In Section \ref{bifeq} we put the solution of the  infinite dimensional part
of the Liapunov-Schmidt system into the  finite dimensional part and discuss the behavior of the resulting
equation. This is completely analogous to what is known from Hopf bifurcation for ODEs and parabolic PDEs.

In Section \ref{Examples} we   present an example of a problem of the type  (\ref{eq:1.1})--(\ref{eq:1.3}) such that all assumptions 
\reff{smooth}--\reff{hyp}, \reff{u=0},  \reff{geo}--\reff{nonres}, and \reff{Fred} of Theorem \ref{thm:hopf} are satisfied.

Finally, in the appendix  we present a simple linear version of the so-called fiber contraction principle, which is used in the proof of the key technical Lemma \ref{laregular}.

\subsection{Generalizations}

We do not know if generalizations of Theorem \ref{thm:hopf} to higher space dimensions and/or to
quasilinear systems exist and how they should look like. On the other hand,  Theorem \ref{thm:hopf} 
can be generalized to  equations and boundary conditions with nonlocal terms
(periodic boundary conditions, for example). Then the dissipativity condition \reff{Fred} should be changed accordingly,
which is a task of a different paper.

Also, it is plausible that  Theorem \ref{thm:hopf} generalizes to  second-order semilinear
wave equations with Dirichlet, Neumann, Robin, or periodic boundary conditions, and, again, a nontrivial
question  is how we have to modify the condition \reff{Fred}.
Remark that in \cite{Koch} a Hopf bifurcation theorem is stated without proof for
 second-order quasilinear  hyperbolic systems with arbitrary space dimension
subjected to homogeneous Dirichlet boundary conditions.

\subsection{Applications}
\label{Applications}

One field of appearance of models, which contain  dissipative hyperbolic PDEs and which are used 
for describing, optimizing and stabilizing time-periodic processes,
is modeling of semiconductor laser devices and  their applications in communication systems (see, e.g. \cite{LiRadRe,Peterhof,Rad,RadWu,Sieber}).
Remark that the 
mentioned semiconductor laser models have some specific features: There the hyperbolic PDEs (balance equations for the complex amplitudes of the light field) have complex coefficients,
and they are coupled with ODEs (balance equations for the electron densities). Moreover, the models possess a nonlinear-Schr\"odinger-equation-like $SO(2)$-equivariance,
therefore the Hopf bifurcations are bifurcations from relative equilibria (rotating waves) into relative periodic orbits (modulated waves). Anyway, for proving Hopf bifurcation there
one has to overcome the same problems as in the present paper.

In \cite{Barbera} (with applications to population dynamics), \cite{Hillen} (with applications to correlated random walks), \cite{Horst} (with applications to Brownian motion) and \cite{Martel} 
(with applications to Rayleigh-B\'{e}nard convection) the authors considered semilinear hyperbolic systems of the type (\ref{eq:1.1}) with boundary conditions of the type  (\ref{eq:1.2})
and with certain additional structures (determined by the applications) in the PDEs as well as in the boundary conditions. 
A linear stability analysis for the stationary solutions is done, and the bifurcation hypersurfaces in the space of control parameters are described, in particular
those where the conditions \reff{geo}, \reff{alg}, and \reff{eq:IV} are satisfied and, hence, where the 
authors expect Hopf bifurcation to appear.

\subsection{Some remarks on related work}
\label{related}
The main methods for proving Hopf bifurcation theorems are, roughly speaking, center manifold reduction 
and Liapunov-Schmidt reduction.
In order to apply them to abstract evolution equations one needs to have a smooth center manifold for 
the corresponding 
semiflow (for center manifold reduction) or a Fredholm property  of the linearized equation on 
spaces of periodic functions (for Liapunov-Schmidt reduction). 

In  \cite{CrandRab,Ki} Hopf bifurcation theorems for abstract evolution equations are proved by means of
 Liapunov-Schmidt reduction, and in \cite{Haragus,Ma,Vander} by means of center manifold reduction.
In  \cite{CrandRab,Ki} it is assumed that the operator of the linearized equation is sectorial
(see \cite[Hypothesis (HL)]{CrandRab} and \cite[Hypothesis I.8.8]{Ki}), hence this setting is not appropriate 
for hyperbolic PDEs. In  \cite{Haragus,Ma,Vander} the assumptions concerning the linearized operator are more 
general, including non-sectorial operators. However, it is unclear if our system 
\reff{eq:1.1}--\reff{eq:1.3} can be written as an abstract evolution equation satisfying those conditions.
On the other hand, it is interesting to see in \cite{Vander} that the 1D semilinear damped wave equation   
$\d_t^2u=\d_x^2u-\gamma \d_tu+f(u)$
with  $f(0)=0$, subjected to homogeneous Dirichlet boundary conditions, can be written as an abstract 
evolution equation satisfying the general assumptions of  \cite{Vander}. But for that it is essentially used the
exeptional property that the nonlinearity $f$ maps functions with zero boundary conditions into functions which have zero
boundary conditions again.

The celebrated counter example of M. Renardy \cite{Renardy}
shows that a reasonable linear hyperbolic differential operator in two space dimensions with  periodic boundary conditions
may not satisfy the spectral mapping property 
(see also \cite{ChiconeL, Engel}), and, hence, its spectral decomposition does not create a corresponding spectral decomposition 
of the corresponding linear semiflow. 
Therefore, the question of existence of center manifolds for small nonlinear perturbations of this linear semiflow is completely open.

But hyperbolic PDEs in one space dimension are better: They satisfy, under reasonable assumptions, the spectral mapping property in $L^p$-spaces 
(see \cite{Lopes}) as well as in $C$-spaces (see \cite{Lichtner1}). The spectral mapping property in $L^p$-spaces is used in \cite{Peterhof, Sieber}
to show the existence of smooth center manifolds for linear first-order hyperbolic PDE systems which are coupled with nonlinear ODEs.
Here the linearity of the problem with respect to the infinite dimensional part of the phase space is essential, because it implies the 
well-posedness and smoothness  in $L^p$-spaces of the Nemyckii operators. 
The spectral mapping property in $C$-spaces is used in \cite{Lichtner2} to show the way  how to prove the  
existence of smooth center manifolds  in $C$-spaces for general semilinear first-order hyperbolic systems. It 
seems that 
going this way one could prove the Hopf bifurcation theorem  of the the present paper as well.

The eigenvalue problem \reff{evp} is well-understood. 
The set of the real parts of all eigenvalues is bounded from above.
All eigenvalues have finite multiplicity. The eigenvalues are asymptotically (for large imaginary parts) close to eigenvalues of the 
corresponding ``diagonal'' eigenvalue problem (i.e., if the non-diagonal terms $\d_{u_k}b_j(x,\la,0)v_k$ with $j\not=k$ in \reff{evp} are neglected).
If \reff{Fred} is satisfied, then the supremum of the real parts of the eigenvalues of the ``diagonal'' eigenvalue problem is negative, and, hence,
only finitely many eigenvalues of the ``full''  eigenvalue problem \reff{evp} can be close to the imaginary axis.
For related rigorous statements see, e.g., \cite{Lichtner1}, \cite[Chapter 6.1]{Mogul} and \cite{Neves,Lopes,ReSchne}.

In \cite{Akramov}  Hopf bifurcation for (\ref{eq:1.1})--(\ref{eq:1.3}) with $a_j(x,\la)$ not depending on $\la$ is considered. It is assumed that 
many reflection coefficients 
$r_{jk}$ vanish, which allows to use some smoothing property for the solutions to the corresponding linearized initial-boundary value problem~\cite{lyulko}.
Despite a number of interesting ideas appearing in \cite{Akramov}, it seems 
that there is an essential gap in the realization of the Liapunov-Schmidt procedure: Using our notation,  
the finite-dimensional part \reff{finite} of the   Liapunov-Schmidt system is first locally solved 
with respect to $\la$ and $\mu$ (in terms of $v$ and $w$):
$$
\la=\hat{\la}(v,w),\; \om=\hat{\om}(v,w).
$$
Then these solutions are inserted into the  infinite-dimensional part \reff{infinite} of the  Liapunov-Schmidt system. 
Finally the intention is to solve the resulting equation by means of the implicit function theorem with respect to $w$. 
But in the  resulting equation there appear terms of the type
$$
w_j(\xi,\tau_j(\xi,x,t,\hat{\om}(v,w))).
$$
In other words: The unknown function appears in the argument of the  unknown function 
(like, for example, in ODEs with state depending delay).
If one formally differentiates this expression with respect to $w$, then there appears $\d_tw_j(\xi,\tau_j(\xi,x,t,\hat{\om}(v,w)))$,
which has less smoothness with respect to $t$ than $w_j$. Roughly speaking, this loss of smoothness property is the reason why  
the nonlinear operator corresponding to the resulting equation
is not differentiable in a neighborhood of zero, neither in the sense of $C$-spaces nor of $C^1$-spaces. Hence, the implicit function theorem 
(at least the classical one) is not applicable.

In \cite{Magal} the authors  considered 
scalar linear first-order PDEs of the type
$(\partial_t  +\partial_x + \mu)u = 0$ on $(0,\infty)$
with a nonlinear integral boundary condition at $x=0$:
$$
u(0,t) = h\left(\int_0^\infty\ga(x)u(x,t)\, dx\right).
$$
The nonlocality of the boundary condition is essential for the applied techniques in \cite{Magal} 
(integrated semigroup theory, see also  \cite{Liu}) 
to get existence of 
center manifolds and Hopf bifurcation. 
It is easy to realize that we could also consider (linear or nonlinear) integral boundary conditions, i.e., if we would replace
(\ref{eq:1.2}) by boundary conditions of the type
\begin{eqnarray*}
u_j(0,t) = h_j\left(\int_0^1\gamma_j(x)u(x,t)\, dx\right),&  j=1,\ldots,m,\\
u_j(1,t) = h_j\left(\int_0^1\gamma_j(x)u(x,t)\, dx\right),&  j=m+1,\ldots,n,
\end{eqnarray*}
then we would get essentially the same result as that described in Theorem \ref{thm:hopf},
even without any assumption of the type \reff{Fred}.  Roughly speaking, the reason is that 
the weak formulation of the problem will be of the type \reff{abstract} again, but now with $C(\la,\om)$ being compact
(due to a smoothing property proved in  \cite{kmit}).

To the best of our knowledge, almost no results  exist 
concerning smooth dependence on parameters of non-degenerate time-periodic solutions of dissipative hyperbolic PDEs.
The papers \cite[Chapter 3.5.1]{Raugel}, \cite{Raugel1} show the difficulty of this problem. There conditions are formulated such that a  non-degenerate time-periodic solution
to a system of semilinear damped wave equations survives under small parameter perturbations, but nothing is known if 
the perturbed solution depends smoothly on the perturbation parameters. Results about smooth dependence on data for 
time-periodic solutions to linear first-order hyperbolic systems with reflection boundary conditions  are given
in \cite{KR2}.

\section{Abstract formulation of  \reff{eq:1.1}--\reff{eq:1.3}}
\label{Function Spaces and Operators}
\renewcommand{\theequation}{{\thesection}.\arabic{equation}}
\setcounter{equation}{0}

In this section we derive (for $\la \approx 0$) a weak formulation for the PDE problem 
\reff{eq:1.1}--\reff{eq:1.3}
via integration along characteristics, and we introduce operators in order to 
write this weak formulation as an operator equation.

Let $\delta_0>0$ be sufficiently small such that (cf. assumption \reff{hyp0})
$$
a_j(x,\la) \not= 0 \mbox{ for all } j=1,\ldots,n,\; x \in [0,1] \mbox{ and } \la \in [-\delta_0,\delta_0].
$$
Straightforward calculations (cf. \cite[Section 2]{KR3}) show that a $C^1$-map $u:[0,1]\times \R \to \R^n$ 
is a solution to the PDE problem 
\reff{eq:1.1}--\reff{eq:1.3} with $\la \in [-\delta_0,\delta_0]$ if and only if it is a solution to the following system of integral equations:
\begin{eqnarray}
\label{rep1}
\lefteqn{
u_j(x,t)=c_j(0,x,\la)\sum_{k=m+1}^nr_{jk}u_k(0,\tau_j(0,x,t,\la,\om))}\nonumber\\
&&+\int_0^x \frac{c_j(\xi,x,\la)}{a_j(\xi,\la)}f_j(\xi,\lambda,u(\xi,\tau_j(\xi,x,t,\lambda,\om)))d\xi,\quad j=1,\ldots,m,
\end{eqnarray}
\begin{eqnarray}
\label{rep2}
\lefteqn{
u_j(x,t)=c_j(1,x,\la)\sum_{k=1}^m r_{jk}u_k(1,\tau_j(1,x,t,\la,\om))}\nonumber\\
&&-\int_x^1 \frac{c_j(\xi,x,\la)}{a_j(\xi,\la)}f_j(\xi,\la,u(\xi,\tau_j(\xi,x,t,\la,\om)))d\xi,\quad j=m+1,\ldots,n.
\end{eqnarray}
Here
\beq
\label{charom}
\tau_j(\xi,x,t,\la,\om):=\om\int_x^\xi \frac{d\eta}{a_j(\eta,\la)}+t
\ee
is the $j$-th characteristic of the hyperbolic system \reff{eq:1.1} and
\begin{eqnarray}
\label{cdef}
c_j(\xi,x,\la)&:=&\exp \int_x^\xi
\frac{\partial_{u_j}b_j(\eta,\la,0)}{a_{j}(\eta,\la)}\,d\eta,\\
\label{fdef}
f_j(x,\la,u)&:=&\partial_{u_j}b_j(x,\la,0)u_j-b_j(x,\la,u).
\end{eqnarray}
If we set $b(x,\la,u):=(b_1(x,\la,u),\ldots,b_n(x,\la,u))$ and  
$
f(x,\la,u):=(f_1(x,\la,u),\ldots,f_n(x,\la,u)),
$
then the nonlinear map $-f(x,\la,\cdot)$ is the difference of the nonlinear map $b(x,\la,\cdot)$ and of  the diagonal part of  the linear map
$\d_ub(x,\la,0)$. Hence,
the diagonal part of $\d_uf(x,\la,0)$ vanishes. This will be used later on (see \reff{dF1} and the text there), 
because this implies that the linear operators $I-C(\la, \om) -\partial_uF(\la,\om,0)$ (see \reff{Cdef} and \reff{Fdef})
are Fredholm of index zero  from $\CC_n$ into $\CC_n$.

Let us show that any solution to  \reff{eq:1.1}--\reff{eq:1.3} satisfies  \reff{rep1}--\reff{rep2}:
If $u$ is a  $C^1$-solution to  \reff{eq:1.1}--\reff{eq:1.3}, then
\begin{eqnarray}
&&\frac{d}{d\xi} u_j(\xi,\tau_j(\xi,x,t,\la,\om))+\frac{\partial_{u_j}b_j(\xi,\la,0)}{a_j(\xi,\tau_j(\xi,x,t,\la,\om))}
u_j(\xi,\_j(\xi,x,t,\la,\om))\nonumber\\
&&=\frac{f_j(\xi,\la,u(\xi,\tau_j(\xi,x,t,\la,\om)))}{a_j(\xi,\tau_j(\xi,x,t,\la,\om))}.
\label{Deq}
\end{eqnarray}
Now, applying the variation of constants formula and using the boundary conditions
\reff{eq:1.3},  one gets \reff{rep1}--\reff{rep2}.

And vice versa: For any $C^1$-smooth map $u: [0,1] \times \R \to \R$ it holds

\begin{eqnarray}
\lefteqn{\left(\om\d_t+a_j(x,\la)\d_x\right)\left(c_j(\xi,x,\la)u_k(\xi,\tau_j(\xi,x,t,\la,\om))\right)}\nonumber\\
&&=-\partial_{u_j}b_j(x,\la,0) c_j(\xi,x,\la)u_k(\xi,\tau_j(\xi,x,t,\la,\om))
\label{71}
\end{eqnarray}
and
\begin{eqnarray}
\lefteqn{\left(\om\d_t+a_j(x,\la)\d_x\right)\int_\xi^x\frac{c_j(\eta,x,\la)}{a_j(\eta,\la)}f_j(\eta,\la,u(\eta,\tau_j(\eta,x,t,\la,\om)))d\eta}\nonumber\\
&&=f_j(x,\la,u(x,t))-\partial_{u_j}b_j(x,\la,0)\int_\xi^x\frac{c_j(\eta,x,\la)}{a_j(\eta,\la)}f_j(\eta,\la,u(\eta,\tau_j(\eta,x,t,\la,\om)))d\eta.
\label{72}
\end{eqnarray}
Therefore, if $u$ satisfies \reff{rep1}--\reff{rep2}, then it satisfies \reff{eq:1.1}.

Now, for $\la \in[-\delta_0,\delta_0]$ and $\om \in \R$ we define linear bounded operators $C(\la, \om): \CC_n \to \CC_n$ by
\beq
\label{Cdef}
\left(C(\la,\om)u\right)_j(x,t):=
\left\{
\begin{array}{ll}
\displaystyle
c_j(0,x,\la)\sum_{k=m+1}^nr_{jk}u_k(0,\tau_j(0,x,t,\la,\om)), \;j=1,\ldots,m,\\
\displaystyle
c_j(1,x,\la)\sum_{k=1}^mr_{jk}u_k(1,\tau_j(1,x,t,\la,\om)), \;j=m+1,\ldots,n
\end{array}
\right.
\ee
and nonlinear operators  $F(\la, \om,\cdot): \CC_n \to \CC_n$ by
\beq
\label{Fdef}
F(\la,\om,u)_j(x,t):=
\int_{x_j}^x \frac{c_j(\xi,x,\la)}{a_j(\xi,\la)}f_j(\xi,\lambda,u(\xi,\tau_j(\xi,x,t,\lambda,\om)))d\xi.
\ee
Here and in what follows we denote
$$
x_j:=\left\{
\begin{array}{rcl}
0 & \mbox{for} & j=1,\dots,m,\\
1 & \mbox{for} & j=m+1,\dots,n.
\end{array}\nonumber
\right.
%\ee
$$
Using this notation, system \reff{rep1}--\reff{rep2} is equivalent to the operator equation
\beq
\label{abstract}
u=C(\la,\om)u+F(\la,\om,u).
\ee

Because of assumption \reff{smooth} and well-known differentiability properties of Nemytskii operators (see, e.g., \cite[Lemma 2.4.18]{Abraham},
\cite[Lemma 6.1]{Amann})
the superposition operator
$
%u \in \CC_n \mapsto (f_1(\cdot,\la,u(\cdot)),\ldots,f_n(\cdot,\la,u(\cdot))) \in \CC_n
u \in \CC_n \mapsto f(\cdot,\la,u(\cdot))\in \CC_n
$
(cf. \reff{fdef})
is $C^\infty$-smooth. Hence, $F(\la,\om,\cdot)$ is $C^\infty$-smooth. It is easy to verify that
\beq
\label{cont1}
(\la,\om,u) \in [-\delta_0,\delta_0] \times \R \times \CC_n \mapsto C(\la,\om)u \in \CC_n \mbox{ is continuous}
\ee
and that for all $k=0,1,2,\ldots$
\beq
\label{cont2}
\begin{array}{l}
(\la,\om,u,v_1,\ldots,v_k) \in [-\delta_0,\delta_0] \times \R \times \CC_n\times \ldots \times \CC_n  \nonumber\\
\mapsto \partial^k_u
F(\la,\om,u)(v_1,\ldots,v_k) \in \CC_n \mbox{ is continuous.}
\end{array}
\ee
Moreover, for all  $k=0,1,2,\ldots$ and $\rho>0$ there exists $c_k(\rho)>0$ such that for all $\la \in [-\delta_0,\delta_0]$, $\om \in \R$, $u \in \CC_n$  
with $\|u\|_\infty \le \rho$ and $v_1,\ldots,v_k \in  \CC_n$ it holds 
\beq
\label{cont3}
\|C(\la,\om)v_1 \|_\infty \le c_1(\rho) \|v_1\|_\infty,\quad 
\| \partial^k_u
F(\la,\om,u)(v_1,\ldots,v_k)\|_\infty \le c_k(\rho) \|v_1\|_\infty \ldots \|v_k\|_\infty.
\ee
\begin{rem}
Unfortunately,  the maps $(\la,\om) \in [-\delta_0,\delta_0]
\times \R \mapsto C(\la,\om)u \in  \CC_n$ as well as  
$(\la,\om) \in [-\delta_0,\delta_0]\times \R \mapsto F(\la,\om,u) \in  \CC_n$ 
are not smooth, in general,
if $u$ is only continuous and not smooth, cf. \reff{lader}.
This makes the question, if the data-to-solution map corresponding to \reff{abstract} 
is smooth, very delicate.
\end{rem}

\begin{defn}\label{C^1spaces}
We denote by $\CC_n^1$  the Banach space of all $u \in \CC_n$ such that the partial derivatives 
$\d_xu$ and $\d_tu$
exist and are continuous with the norm $\|u\|_\infty +\|\d_xu\|_\infty +\|\d_tu\|_\infty$.
\end{defn}

Directly from the definitions \reff{Cdef} and \reff{Fdef} it follows that for all $u \in \CC_n^1$ we have
$C(\la,\om)u \in \CC_n^1$, $F(\la,\om,u) \in \CC_n^1$, and
\beq
\label{Abl}
\begin{array}{rcl}
\d_xC(\la,\om)u&=&C^x_0(\la,\om)u+C^x_1(\la,\om)\d_tu,\\
\d_tC(\la,\om)u&=&C(\la,\om)\d_tu,\\
\d_xF(\la,\om,u)&=&F^x_0(\la,\om,u)+F^x_1(\la,\om,u)\d_tu,\\
\d_tF(\la,\om,u)&=&\d_uF(\la,\om,u)\d_tu,
\end{array}
\ee
where  $C^x_0(\la,\om)$,  $C^x_1(\la,\om)$, and  $F^x_1(\la,\om,u)$ are linear bounded operators from $\CC_n$ into  $\CC_n$
defined by
$$
\begin{array}{rcl}
\displaystyle
\left(C^x_0(\la,\om)u\right)_j(x,t)&:=&
\left\{
\begin{array}{ll}
\displaystyle
\d_xc_j(0,x,\la)\sum_{k=m+1}^nr_{jk}u_k(0,\tau_j(0,x,t,\la,\om)), \;j=1,\ldots,m,\\
\displaystyle
\d_xc_j(1,x,\la)\sum_{k=1}^mr_{jk}u_k(1,\tau_j(1,x,t,\la,\om)), \;j=m+1,\ldots,n,
\end{array}
\right.\\
\displaystyle
\left(C^x_1(\la,\om)v\right)_j(x,t)&:=&
\left\{
\begin{array}{ll}
\displaystyle
-\frac{\om c_j(0,x,\la)}{a_j(x,\la)}\sum_{k=m+1}^nr_{jk}v_k(0,\tau_j(0,x,t,\la,\om)), \;j=1,\ldots,m,\\
\displaystyle
-\frac{\om c_j(1,x,\la)}{a_j(x,\la)}\sum_{k=1}^mr_{jk}v_k(1,\tau_j(1,x,t,\la,\om)), \;j=m+1,\ldots,n,
\end{array}
\right.\\
\end{array}
$$
and
\begin{eqnarray*}
&&(F^x_1(\la,\om,u)v)_j(x,t)\\
&&:=
\displaystyle-\int_{x_j}^x\frac{\om c_j(\xi,x,\la)}{a_j(x,\la)a_j(\xi,\la)}\sum_{k=1}^n
\d_{u_k}f_j(\xi,\la,u(\xi,\tau_j(\xi,x,t,\la,\om)))
v_k(\xi,\tau_j(\xi,x,t,\la,\om))d\xi,
\end{eqnarray*}
and $F_0^x(\la,\om,u) \in \CC_n$ is defined by
$$
(F^x_0(\la,\om,u))_j(x,t):=
\frac{f_j(x,\la,u(x,t))}{a_j(x,t)}
+\int_{x_j}^x\frac{\d_xc_j(\xi,x,\la)}{a_j(\xi,\la)}f_j(\xi,\la,u(\xi,\tau_j(\xi,x,t,\la,\om)))d\xi.
$$

\begin{lemma}\label{Fredho}
Suppose \reff{Fred}. Then for any $\ga_1\in(0,1)$ there exist $\delta_1 \in (0,\delta_0]$ and $c_1>0$ 
such that for all $\la \in [-\delta_1,\delta_1]$ and all 
$\om \in [1-\ga_1,1+\ga_1]$
the following is true:

(i) The operators  $I-C(\la, \om)$ are isomorphisms  on $\CC_n$  as well as   on $\CC_n^1$, and 
$$
\|(I-C(\la,\om))^{-1}\|_{\LL(\CC_n)}+\|(I-C(\la,\om))^{-1}\|_{\LL(\CC_n^1)} \le c_1.
$$

(ii) The operators
$I-C(\la, \om) -\partial_uF(\la,\om,0)$ are Fredholm operators of index zero  from $\CC_n$ into $\CC_n$.
\end{lemma}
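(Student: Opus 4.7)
My plan is to prove (i) via a Neumann-series argument based on contractivity of $C(\la,\om)^2$ under \reff{Fred}, and to reduce (ii) to a compactness statement that is the content of \cite{KR3}.

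For (i), I would first exploit that $C(\la,\om)$ acts off-diagonally in the component index: by \reff{Cdef}, $(Cu)_j$ for $j\le m$ depends only on the $x=0$ traces of $u_{m+1},\ldots,u_n$, and symmetrically for $j>m$. A direct sup-norm bound from the definition gives $\max_{j\le m}\|(C(\la,\om)u)_j\|_\infty\le R_0(\la)\max_{k>m}\|u_k\|_\infty$ and the analogue with $R_1(\la)$ for $j>m$, where $R_0(\la):=\max_{j\le m}\max_x\sum_{k>m}|r_{jk}||c_j(0,x,\la)|$ is continuous in $\la$ and coincides with $R_0$ at $\la=0$. Composing yields $\|C(\la,\om)^2\|_{\LL(\CC_n)}\le R_0(\la)R_1(\la)$ uniformly in $\om$; by \reff{Fred} and continuity, this is bounded by some $q<1$ for $|\la|\le\delta_1$ with $\delta_1$ sufficiently small. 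A uniformly bounded Neumann series then yields $(I-C^2)^{-1}$, and $(I-C)^{-1}=(I+C)(I-C^2)^{-1}$ gives the inverse on $\CC_n$. For $\CC_n^1$ I would invoke the identities \reff{Abl}: since $\partial_tC=C\partial_t$, the $\partial_t$-norm of $\sum C^kf$ inherits the same geometric decay; and the extra operators $C_0^x,C_1^x$ in $\partial_xC=C_0^x+C_1^x\partial_t$ contribute only a bounded multiplier per term, so the series converges in $\CC_n^1$ as well.

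For (ii), I would factor $I-C-\partial_uF(\la,\om,0)=(I-C)\bigl(I-(I-C)^{-1}\partial_uF(\la,\om,0)\bigr)$, which by (i) reduces index-zero Fredholmness to compactness of $(I-C)^{-1}\partial_uF(\la,\om,0)$ on $\CC_n$; then the Fredholm alternative finishes the proof. The essential structural input is that $\partial_uF(\la,\om,0)$ has vanishing diagonal, as already noted after \reff{fdef}: the subtraction of $\partial_{u_j}b_j$ in the definition of $f_j$ ensures $(\partial_uFv)_j$ involves only $v_k$ with $k\ne j$, evaluated along the $j$-th characteristic.

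The hard part is precisely this compactness. The bare operator $\partial_uF(\la,\om,0)$ is not compact on $\CC_n$: although it integrates in $\xi$ against a smooth kernel, the time-shift $\tau_j$ in the argument of $v_k$ destroys any equicontinuity uniform over $\|v\|_\infty\le 1$. What rescues the argument --- and is the technical achievement of \cite{KR3}, as emphasized in the introduction --- is that after composition with $(I-C)^{-1}$, which under $R_0R_1<1$ is a geometrically convergent sum of iterated reflections, the combination of the off-diagonal integration with the distinct propagation speeds \reff{hyp} produces iterated Volterra-type smoothing along the bouncing characteristics, eventually providing the equicontinuity needed for Arzel\`a--Ascoli. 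I would invoke the corresponding compactness statement from \cite{KR3} to close the argument.
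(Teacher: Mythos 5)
Your treatment of (i) is correct and is essentially the paper's own argument: the paper solves $u=C(\la,\om)u+f$ by splitting $u=(v,w)$ and inverting $I-K(\la,\om)L(\la,\om)$ with $\|K(\la,\om)\|\,\|L(\la,\om)\|\le R_0(\la)R_1(\la)<1$ near $\la=0$, which is the same mechanism as your bound $\|C(\la,\om)^2\|\le R_0(\la)R_1(\la)$ and Neumann series. The only deviation is the $\CC_n^1$ statement: the paper gets $u\in\CC_n^1$ from \cite[Theorem 1.2 (iv)]{KR2} and then proves the a priori bound from \reff{Abl}, whereas you differentiate the Neumann series term by term using $\d_tC=C\d_t$ and $\d_xC=C_0^x+C_1^x\d_t$; that variant is fine (and correctly produces the $\ga_1$-dependence through $C_1^x$).

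In (ii), however, there is a genuine gap. You reduce index-zero Fredholmness of $I-C-\d_uF(\la,\om,0)$ to compactness of $A:=(I-C(\la,\om))^{-1}\d_uF(\la,\om,0)$ on $\CC_n$, and you attribute this compactness to \cite{KR3}. That operator is not compact in general, and \cite{KR3} does not assert it is. The operator $(I-C)^{-1}=\sum_r C^r$ is built from boundary traces composed with time shifts and has no smoothing effect whatsoever; so each term $C^r\d_uF$ involves only a \emph{single} integration along one characteristic family, and functions that are constant along that family are simply reproduced (up to shifts) rather than averaged. Concretely, already for $n=2$, $m=1$, constant speeds $a_1=-1$, $a_2=1$, taking $v^{(N)}_2(\xi,s)=\sin\bigl(N(s+\xi)\bigr)$, $v_1^{(N)}=0$, one computes $\bigl((I-C)^{-1}\d_uF\,v^{(N)}\bigr)_1(x,t)=g(x)\sin\bigl(N(t+x)\bigr)+O(1)$-amplitude terms oscillating at frequency $N$, so the image of the unit ball contains a sequence with no uniformly convergent subsequence. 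The actual smoothing used in the paper requires \emph{two} integral operators along distinct characteristic families, exploiting \reff{hyp} via the integration by parts based on \reff{dxi}; accordingly, what \cite{KR3} provides (see the citation of \cite[formula (4.2)]{KR3} in the proof of Lemma \ref{Magnus}) is compactness of $\d_uF(0,1,0)(I-C(0,1))^{-1}\d_uF(0,1,0)$, not of $(I-C)^{-1}\d_uF$. Your plan can be repaired: with $A$ as above, $A^2=(I-C)^{-1}\bigl[\d_uF(I-C)^{-1}\d_uF\bigr]$ is compact by that result, and then $I-A$ is Fredholm of index zero (use $(I-A)(I+A)=I-A^2$ together with the homotopy $I-tA$, $t\in[0,1]$), whence $I-C-\d_uF=(I-C)(I-A)$ has the asserted property. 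The paper itself simply invokes \reff{dF1} (vanishing diagonal of $\d_uf(\cdot,\la,0)$, which you correctly identified as essential) and cites assertions (i)--(iii) of \cite[Theorem 1.2]{KR3} for the whole of (ii).
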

\begin{proof} 
To prove assertion (i), denote by $\CC_m$ the space of all continuous maps 
$v: [0,1] \times \R \to \R^m$ with $v(x,t+2\pi)=v(x,t)$ for all $x \in [0,1]$ and $t \in \R$, with the norm
$$
\|v\|_\infty:=\max_{1 \le j \le m} \max_{0 \le x \le 1} \max_{t \in \R} |v_j(x,t)|.
$$
Similarly we define the space $\CC_{n-m}$. The spaces $\CC_n$ and $\CC_m \times 
\CC_{n-m}$ will be identified, i.e., elements $u \in \CC_n$ will be written as $u=(v,w)$ with $v \in \CC_m$ and $w \in \CC_{n-m}$.
Then the operators $C(\la,\om)$ work as
\beq
\label{KL}
C(\la,\om)u=(K(\la,\om)w,L(\la,\om)v)\;\; \mbox{ for } u=(v,w),
\ee
where the linear bounded operators $K(\la,\om):\CC_{n-m} \to \CC_m$ and  $L(\la,\om):\CC_{m} \to \CC_{n-m}$
are defined by the right hand side of \reff{Cdef}.

Let $f=(g,h) \in \CC_n$ with $g \in \CC_m$ and $h \in \CC_{n-m}$ be arbitrarily given.
We have $u=C(\la,\om)u+f$  
if and only if
$
v=K(\la,\om)w+g, \; w=L(\la,\om)v+h,
$
i.e., if and only if
\beq
\label{inserted}
v=K(\la,\om)(L(\la,\om)v+h)+g, \quad w=L(\la,\om)v+h.
\ee
Moreover, it holds
$$
\|K(\la,\om)w\|_\infty=\max_{1 \le j \le m} \max_{0 \le x \le 1} 
\max_{t \in \R}\left|c_j(0,x,\la)\sum_{k=m+1}^nr_{jk}w_k(0,\tau_j(0,x,t,\la,\om))\right|
\le   R_0(\la) \|w\|_\infty 
$$
with
$$
R_0(\la):= 
\max_{1 \le j \le m} \max_{0 \le x \le 1}
\sum_{k=m+1}^n|r_{jk}| \exp \left(-\int_0^x\frac{\d_{u_j}b_j(\eta,\la,0)}{a_j(\eta,\la)}d\eta\right).
$$
Similarly one shows that $\|L(\la,\om)v\|_\infty\le R_1(\la)\|v\|_\infty$ with
$$
R_1(\la):= 
\max_{m+1 \le j \le n} \max_{0 \le x \le 1} 
\sum_{k=m+1}^n|r_{jk}|\exp \int_x^1\frac{\d_{u_j}b_j(\eta,\la,0)}{a_j(\eta,\la)}d\eta.
$$
Since $R_0(\la)$ and $R_1(\la)$ depend continuously on $\la$, notation
 \reff{Rdef0} and \reff{Rdef1} and 
assumption \reff{Fred} yield that there exist  $\delta_1 \in (0,\delta_0]$ and $c \in(0,1)$ such that for all $\la \in [-\delta_1,\delta_1]$
and all $\om \in \R$ we have
$$
\|K(\la,\om)\|_{\LL(\CC_{n-m};\CC_m)}\|L(\la,\om)\|_{\LL(\CC_{m};\CC_{n-m})} \le c.
$$
Therefore, for those $\la$ and $\om$ it holds
$$
\|(I-K(\la,\om)L(\la,\om))^{-1}\|_{\LL(\CC_{n-m})} \le \frac{1}{1-c},
$$
hence, the system \reff{inserted} is uniquely solvable with respect to $v$ and $w$.
Moreover, we have the following a priori estimates:
\begin{eqnarray*}
&&\|v\|_\infty \le  \frac{1}{1-c} \|K(\la,\om)h+g\|_\infty \le
 \frac{1}{1-c} \left(R_0(\la)\|h\|_\infty+\|g\|_\infty\right),\\
&&\|w\|_\infty =  \|L(\la,\om)v+h\|_\infty \le
 \mbox{const }\left(\|g\|_\infty+\|h\|_\infty\right),
\end{eqnarray*}
i.e., $\|u\|_\infty \le \mbox{const} \,\|f\|_\infty$.
Here the constants do not depend on $\la \in [-\delta_1,\delta_1]$ and $\om \in \R$.

Now fix an arbitrary $\ga_1>0$. 
Suppose $u=C(\la,\om)u+f$ with $f \in \CC_n^1$, $\la \in[-\delta_1,\delta_1]$, and $\om \in [-\ga_1,\ga_1]$.
Then  \cite[Theorem 1.2 (iv)]{KR2}
implies that  $u \in \CC_n^1$.
Hence, it remains to show that
\beq
\label{apriori}
\|u\|_\infty+\|\d_xu\|_\infty+\|\d_tu\|_\infty \le \mbox{const}\left(\|f\|_\infty+\|\d_xf\|_\infty+\|\d_tf\|_\infty\right),
\ee
where the constant can be chosen independently of $u$, $f$, $\la$, and $\om$.

Because of 
\reff{Abl} we have
\begin{eqnarray}
\label{1}
&&\d_xu=C_0^x(\la,\om)u+C_1^x(\la,\om)\d_tu+\d_xf,\\
\label{2}
&&\d_tu=C(\la,\om)\d_tu+\d_tf.
\end{eqnarray}
The equation  \reff{2} now yields $\|\d_tu\|_\infty \le \mbox{const }\|\d_tf\|_\infty$, and this together with \reff{1} gives 
\reff{apriori}.
Here the constants do not depend on $\la \in [-\delta_1,\delta_1]$ and $\om \in [-\gamma_1,\gamma_1]$, but on $\gamma_1$, 
because the norm of the operator $C^x_1(\la,\om)$ grows if $\om$ grows, in general.

To prove assertion (ii), we take into account \reff{fdef} and \reff{Fdef}, hence 
\beq
\label{dF1}
\left(\d_uF(\la,\om,0)u\right)_j(x,t)
=-\int_{x_j}^x \frac{c_j(\xi,x,\la)}{a_j(\xi,\la)}\sum_{k=1\atop k\not=j}^n\d_{u_k}b_j(\xi,\la,0)
u_k(\xi,\tau_j(\xi,x,t,\lambda,\om)))d\xi.
\ee
Since  the right-hand side of \reff{dF1} does not depend on $u_j$,
we can use assertions (i)--(iii) of \cite[Theorem 1.2]{KR3} and state 
that the operators
$I-C(\la, \om) -\partial_uF(\la,\om,0)$ are Fredholm of index zero  from $\CC_n$ into $\CC_n$ if
$R_0(\la) R_1(\la) <1$.
Because of  assumption  \reff{Fred} this is the case if $\la$ is sufficiently close to zero.
\end{proof}

\section{Liapunov-Schmidt procedure}
\label{Lyapunov-Schmidt Procedure}
\renewcommand{\theequation}{{\thesection}.\arabic{equation}}
\setcounter{equation}{0}

In this section we do a {Liapunov-Schmidt procedure in order to reduce (locally for $\la \approx 0, \om \approx 1$ and $u \approx 0$)
the problem \reff{abstract} with infinite-dimensional state parameter $(\om,u) \in \R \times \CC_n$ to a problem with two-dimensional state parameter.

For $\la\in [-\delta_0,\delta_0]$ and  $\om\in\R$ we introduce linear bounded operators $A(\la,\om),\tilde{A}(\la,\om): \CC_n^1 \to \CC_n$ by
\begin{eqnarray*}
\left[A(\la,\om)u\right](x,t)&:=&\left[\om\d_tu_j(x,t)+a_j(x,\la)\d_xu_j(x,t)+\d_{u_j}b_j(x,\la,0)u_j(x,t)\right]_{j=1}^n,\\
\left[\tilde{A}(\la,\om)u\right](x,t)&:=&\left[-\om\d_tu_j(x,t)-\d_x(a_j(x,\la)u_j(x,t))+\d_{u_j}b_j(x,\la,0)u_j(x,t)\right]_{j=1}^n
\end{eqnarray*}
and linear bounded operators $B(\la),\tilde{B}(\la),D(\la):\CC_n \to \CC_n$ by
\begin{eqnarray*}
\left[B(\la)u\right](x,t)&:=&\left[\sum_{k=1\atop k\not=j}^n\d_{u_k}b_j(x,\la,0)u_k(x,t)\right]_{j=1}^n,\\
\left[\tilde{B}(\la)u\right](x,t)&:=&\left[\sum_{k=1\atop k\not=j}^n\d_{u_j}b_k(x,\la,0)u_k(x,t)\right]_{j=1}^n,\\
\left[D(\la,\om)u\right](x,t)&:=&\left[\int_{x_j}^x \frac{c_j(\xi,x,\la)}{a_j(\xi,0)}u_j(\xi,\tau_j(\xi,x,t,\la,\om))d\xi\right]_{j=1}^n.
\end{eqnarray*}
Finally, we denote by
\beq
\label{sp}
\langle u,v \rangle := \frac{1}{2\pi}\sum_{j=1}^n\int_0^{2\pi} \int_0^1 u_j(x,t)v_j(x,t) dxdt
\ee
the $L^2$ scalar product in $\CC_n$. Obviously, for all  $u,v \in \CC_n^1$ it holds
\beq
\label{AtildeA}
\langle A(\la,\om)u,v \rangle-\langle u,\tilde{A}(\la,\om)v \rangle=\left[\sum_{j=1}^na_j(x,\la)\int_0^{2\pi}u_j(x,t)v_j(x,t)dt\;\right]_{x=0}^{x=1},
\ee  
and  for all  $u,v \in \CC_n$ we have
\beq
\label{BtildeB}
\langle B(\la)u,v \rangle-\langle u,\tilde{B}(\la)v \rangle=0.
\ee
In particular, if $u$ satisfies the boundary conditions \reff{eq:1.2} and $v$  satisfies the adjoint boundary conditions
\beq\label{adbc}
\begin{array}{l}
\displaystyle
a_j(0,\la)v_j(0,t) = -\sum\limits_{k=1}^mr_{kj}a_k(0,\la)v_k(0,t),\quad  j=m+1,\ldots,n,\\
\displaystyle
a_j(1,\la)v_j(1,t) = -\sum\limits_{k=m+1}^nr_{kj}a_k(1,\la)v_k(1,t), \quad  j=m+1,\ldots,n,
\end{array}
\ee
then $\langle A(\la,\om)u,v \rangle=\langle u,\tilde{A}(\la,\om)v \rangle$.

\begin{lemma}\label{AB} (i) For all 
$u \in \CC_n^1$ it holds $A(\la,\om)C(\la,\om)u=0$,   $A(\la,\om)D(\la,\om)u=u$,
$A(\la,\om)\d_uF(\la,\om,0)u=-B(\la)u$, and
\beq
\label{Af}
\left[A(\la,\om)F(\la,\om,u)\right](x,t)=f(x,\la,u(x,t)).
\ee

(ii)  For all 
$u \in \CC_n^1$ satisfying the boundary conditions \reff{eq:1.2} it holds
$D(\la,\om)A(\la,\om)u=(I-C(\la,\om))u$.
\end{lemma}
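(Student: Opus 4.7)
The plan is to verify all identities by direct pointwise computation, reusing ingredients already assembled in Section~\ref{Function Spaces and Operators}: the identities \reff{71}--\reff{72} (which record the action of $\om\d_t+a_j(x,\la)\d_x$ on functions of the form $c_j(\xi,x,\la)u_k(\xi,\tau_j)$ and on their integrals over $\xi$), the chain rule along characteristics, and the definitions \reff{Cdef}--\reff{Fdef}.

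For part~(i), I apply $A_j(\la,\om)=\om\d_t+a_j(x,\la)\d_x+\d_{u_j}b_j(x,\la,0)\,\cdot$ componentwise. Specializing \reff{71} to $\xi=x_j\in\{0,1\}$ yields $(\om\d_t+a_j\d_x)[C(\la,\om)u]_j=-\d_{u_j}b_j(x,\la,0)[C(\la,\om)u]_j$, so that $A(\la,\om)C(\la,\om)u=0$. Identity \reff{72} with $\xi=x_j$ then gives, after the zero-order term of $A_j$ cancels the $\d_{u_j}b_j\cdot(\text{integral})$ contribution on the right-hand side of \reff{72}, exactly $[A(\la,\om)F(\la,\om,u)](x,t)=f(x,\la,u(x,t))$, which is \reff{Af}. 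An identical computation with the integrand $u_j(\xi,\tau_j)$ in place of $f_j(\xi,\la,u(\xi,\tau_j))$ in \reff{72} (which is the only structural information used from $f_j$) yields $A(\la,\om)D(\la,\om)u=u$. Finally, $A(\la,\om)\d_uF(\la,\om,0)=-B(\la)$ is the linearization of \reff{Af} at $u=0$, combined with the pointwise computation
$$
\d_uf_j(x,\la,0)\cdot v=\d_{u_j}b_j(x,\la,0)v_j-\sum_{k=1}^n\d_{u_k}b_j(x,\la,0)v_k=-\sum_{k\ne j}\d_{u_k}b_j(x,\la,0)v_k,
$$
which follows at once from the definition \reff{fdef} of $f_j$ (and which also recovers the explicit formula \reff{dF1}).

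For part~(ii), I start from the chain-rule identity
$$
a_j(\xi,\la)\,\frac{d}{d\xi}u_j(\xi,\tau_j(\xi,x,t,\la,\om))=\om(\d_tu_j)(\xi,\tau_j)+a_j(\xi,\la)(\d_xu_j)(\xi,\tau_j),
$$
which is valid because $\d_\xi\tau_j=\om/a_j(\xi,\la)$. Hence $[A(\la,\om)u]_j(\xi,\tau_j)=a_j(\xi,\la)\frac{d}{d\xi}u_j(\xi,\tau_j)+\d_{u_j}b_j(\xi,\la,0)u_j(\xi,\tau_j)$. Substituting this into $[D(\la,\om)A(\la,\om)u]_j(x,t)$, splitting into two pieces, and integrating by parts the total-derivative piece using $\d_\xi c_j(\xi,x,\la)=c_j(\xi,x,\la)\d_{u_j}b_j(\xi,\la,0)/a_j(\xi,\la)$ (from \reff{cdef}) together with $c_j(x,x,\la)=1$, the parasitic integral produced by the IBP cancels the remaining zero-order piece, leaving
$$
[D(\la,\om)A(\la,\om)u]_j(x,t)=u_j(x,t)-c_j(x_j,x,\la)u_j(x_j,\tau_j(x_j,x,t,\la,\om)).
$$
The boundary condition \reff{eq:1.2} rewrites $u_j(x_j,\cdot)=\sum_k r_{jk}u_k(x_j,\cdot)$, so comparison with \reff{Cdef} identifies the subtracted term with $[C(\la,\om)u]_j(x,t)$, giving $DAu=(I-C)u$.

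No serious obstacle is anticipated: the lemma is essentially an algebraic repackaging, in operator form, of the pointwise identities \reff{71}--\reff{72} together with the chain rule along characteristics. The only care needed is to track which of the variables $\xi$, $x$, $t$ each $c_j$ and $\tau_j$ is being differentiated with respect to, and to keep the signs straight in the integration by parts in~(ii).
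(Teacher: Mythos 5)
Your proposal is correct and follows essentially the same route as the paper: part (i) is read off from \reff{71} and \reff{72} with $\xi=x_j$, and part (ii) is the variation-of-constants computation along characteristics that the paper invokes via \reff{Deq}, which you simply write out as an explicit integration by parts using $\d_\xi c_j=c_j\,\d_{u_j}b_j/a_j$ and $c_j(x,x,\la)=1$. Your derivation of $A\d_uF(\la,\om,0)u=-B(\la)u$ by linearizing \reff{Af} (backed by the pointwise formula for $\d_uf_j(x,\la,0)$, i.e.\ \reff{dF1}) is a harmless variant of the paper's argument, which instead reads $\d_uF(\la,\om,0)=-D(\la,\om)B(\la)$ from \reff{dF1} and applies the already-proved identity $A(\la,\om)D(\la,\om)=I$.
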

\begin{proof}
(i) Take $u \in \CC^1_n$. Then $A(\la,\om)C(\la,\om)u=0$ follows from \reff{71} (taking there $\xi=x_j$),
$A(\la,\om)D(\la,\om)u=u$  follows from \reff{72} (taking there $\xi=x_j$ and $f_j(x,\la,u)=u_j$). Similarly, \reff{Af}
follows from \reff{72} (taking there $\xi=x_j$).
Finally, from \reff{dF1} we have $\d_uF(\la,\om,0)u=-D(\la,\om)B(\la)u$, hence  $B(\la)u=-A(\la,\om)\d_uF(\la,\om,0)u$.

(ii) In Section \ref{Function Spaces and Operators} we showed
the following (cf. \reff{Deq}): 
If we have $A(\la,\om)u=f$ for some  $f \in \CC_n$ and some $u \in \CC_n^1$ which satisfies the boundary conditions \reff{eq:1.2},
then $(I-C(\la,\om))u=D(\la,\om)f$. 
\end{proof}

\subsection{Kernel and image of the linearization}

Using the functions $v^0,w^0: [0,1] \to \C^n$, 
introduced in Section \ref{sec:results} (see \reff{normed}), we define functions
${\bf v,w}: [0,1]\times \R \to \C^n$ and ${\bf v_1,v_2,w_1,w_2}: [0,1]\times \R \to \R^n$
by
\beq\label{def}
\begin{array}{l}
{\bf v}(x,t):=v^0(x)e^{-it}, \; {\bf w}(x,t):=w^0(x)e^{-it},\\
{\bf v_1}:=\Re {\bf v},\;  {\bf v_2}:=\Im {\bf v},\;  {\bf w_1}:=\Re {\bf w},\; {\bf w_2}:=\Im {\bf w}.
\end{array}
\ee
It follows from \reff{normed} and \reff{sp} that
\beq
\label{Kro}
\langle {\bf v_j},{\bf w_k} \rangle =\delta_{jk}.
\ee 
Remark that here we used the number two in the normalization condition  \reff{normed}.
Further, we define a linear bounded operator  $L_0: \CC_n \to \CC_n$ by
$$
L_0:=I-C(0,1)-\d_uF(0,1,0).
$$ 
Due to Lemma~\ref{Fredho} (ii),  $L_0$ is a Fredholm operator of index zero from $\CC_n$ into $\CC_n$. 
To simplify further notation we will  write
\beq
\label{ABdef}
A:=A(0,1),\; \tilde A:=\tilde A(0,1),\;
B:=B(0), \;\tilde B:=\tilde B(0).
\ee

\begin{lemma}\label{kerim}
We have
$$
\ker L_0=\mbox{\rm span} \,\{{\bf v_1},{\bf v_2}\},\;\;
\im L_0 =\left\{f \in \CC_n:\; \langle f, \tilde{A}{\bf w_1} \rangle= \langle f, \tilde{A}{\bf w_2} \rangle=0\right\}.
$$
\end{lemma}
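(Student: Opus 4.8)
The plan is to identify $\ker L_0$ and $\im L_0$ by translating the operator equation $L_0 u = f$ back into the eigenvalue problem \reff{evp} using the algebraic identities of Lemma \ref{AB}. Recall $L_0 = I - C(0,1) - \d_uF(0,1,0)$ and write $A := A(0,1)$, $B := B(0)$ as in \reff{ABdef}.

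\emph{Step 1: $\ker L_0 = \mbox{span}\{{\bf v_1},{\bf v_2}\}$.} First I would check the inclusion ``$\supseteq$'': if $u \in \CC_n^1$ solves $A u = B u$ (i.e.\ $u$ is a periodic solution of the linearized PDE) and satisfies the boundary conditions \reff{eq:1.2}, then applying $D(0,1)$ and using Lemma \ref{AB}(ii) gives $(I-C(0,1))u = D(0,1)Bu = -\d_uF(0,1,0)u$ (the last equality from Lemma \ref{AB}(i)), i.e.\ $L_0 u = 0$. The functions ${\bf v}(x,t) = v^0(x)e^{-it}$ and its conjugate solve $\om\d_t + a_j\d_x + \d_{u_j}b_j(\cdot,0,0)$-part $= -B$-part precisely because $v^0$ solves \reff{evp} with $\mu = i$ (the factor $e^{-it}$ converts $\mu$ into $-\om\d_t$ with $\om=1$), so ${\bf v_1},{\bf v_2} \in \ker L_0$. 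Conversely, for ``$\subseteq$'': if $L_0 u = 0$ with $u \in \CC_n$, one must first upgrade regularity — here I would invoke Lemma \ref{Fredho}(i) applied to $u = C(0,1)u + \d_uF(0,1,0)u$, noting $\d_uF(0,1,0)u$ need not lie in $\CC_n^1$ for merely continuous $u$, so the cleaner route is to apply $A$ formally after first checking $u \in \CC_n^1$ via a bootstrap using \cite[Theorem 1.2]{KR2}-type regularity for the fixed-point equation. Once $u \in \CC_n^1$, apply $A$: using Lemma \ref{AB}(i), $A L_0 u = A u - A C(0,1)u - A\d_uF(0,1,0)u = Au - 0 + Bu$, so $Au = -Bu$, which is the periodic linear PDE; moreover $u$ satisfies \reff{eq:1.2} because $C(0,1)u + \d_uF(0,1,0)u$ does by construction. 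Expanding $u$ in a Fourier series in $t$, each Fourier mode $u^{(k)}(x)e^{-ikt}$ solves \reff{evp} with $\mu = ik$; by the nonresonance assumption \reff{nonres} only $k = \pm 1$ survive, and by the geometric simplicity \reff{geo} the $k=1$ mode is a multiple of $v^0$ (and $k=-1$ its conjugate). Hence $u \in \mbox{span}\{{\bf v_1},{\bf v_2}\}$.

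\emph{Step 2: $\im L_0 = \{f : \langle f, \tilde{A}{\bf w_1}\rangle = \langle f, \tilde{A}{\bf w_2}\rangle = 0\}$.} Since $L_0$ is Fredholm of index zero (Lemma \ref{Fredho}(ii)) and $\dim\ker L_0 = 2$ by Step 1, we have $\codim \im L_0 = 2$, so it suffices to exhibit two linearly independent functionals vanishing on $\im L_0$. I claim $\tilde{A}{\bf w_1}, \tilde{A}{\bf w_2}$ work. The key computation: for $u \in \CC_n^1$ satisfying \reff{eq:1.2}, write $f = L_0 u = Au^{\flat}$-type expression; more precisely, from Step 1's manipulation, $L_0 u = D(0,1)(Au + Bu)$ when $u$ satisfies the boundary conditions — wait, rather I would argue directly that for general $u \in \CC_n$, $L_0 u = (I - C(0,1))u - \d_uF(0,1,0)u$, and pair with $\tilde A {\bf w_k}$. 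The cleaner path: show $\langle L_0 u, \tilde A {\bf w_k}\rangle = 0$ for all $u$ by reducing to $u \in \CC_n^1$ (density/continuity), then use that ${\bf w} = w^0 e^{-it}$ solves the adjoint problem \reff{adevp}--\reff{adevpbc} with $\nu = -i$, so $\tilde A(0,1){\bf w} - \tilde B(0){\bf w}$-combination vanishes, and ${\bf w}$ satisfies the adjoint boundary conditions \reff{adbc}. Then by \reff{AtildeA}, \reff{BtildeB} and the boundary conditions the boundary term drops and $\langle (A - (-B))u, {\bf w_k}\rangle$ reorganizes, via Lemma \ref{AB}, into $\langle L_0 u, \tilde A {\bf w_k}\rangle$; here I expect to need the identity $D(0,1)^* $ acting appropriately, or equivalently to use that $\langle D(0,1)g, \tilde A {\bf w_k}\rangle = \langle g, {\bf w_k}\rangle$ (the adjoint of Lemma \ref{AB}(i)'s $AD = I$). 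Linear independence of $\tilde A{\bf w_1}, \tilde A{\bf w_2}$ as functionals follows from the normalization \reff{normed}/\reff{Kro}: $\langle {\bf v_j}, \tilde A {\bf w_k}\rangle$ should reduce (via $A{\bf v_j} = -B{\bf v_j}$ and the pairing identities) to something proportional to $\delta_{jk}$, using \reff{alg} to guarantee non-degeneracy.

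\emph{Main obstacle.} The delicate point is Step 2's pairing identity — tracking exactly how $\tilde A$ intervenes so that $\tilde A {\bf w_k}$ (rather than ${\bf w_k}$ itself) is the right functional, and in particular the regularity issue: $\tilde A {\bf w_k} \in \CC_n$ only because ${\bf w}$ is smooth, but $L_0 u$ for $u \in \CC_n$ need not be in $\CC_n^1$, so the pairing $\langle L_0 u, \tilde A {\bf w_k}\rangle$ must be justified by approximating $u$ in $\CC_n^1$ and using continuity of $L_0$ on $\CC_n$ together with boundedness of the functional. A secondary obstacle is the regularity bootstrap in Step 1 ($L_0 u = 0 \Rightarrow u \in \CC_n^1$), which should follow from Lemma \ref{Fredho}(i)-style arguments applied to the equation $u = C(0,1)u + \d_uF(0,1,0)u$ once one observes $\d_uF(0,1,0)u$ has enough $x$-smoothness from the integral in \reff{dF1}, but the $t$-regularity needs the structure $\d_t(\d_uF(0,1,0)u) = \d_uF(0,1,0)\d_t u$ — which is circular unless handled via a fixed-point/contraction argument as in the cited \cite{KR2}.
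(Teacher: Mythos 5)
Your overall route coincides with the paper's: for the kernel, upgrade regularity via \cite[Theorem 1.2(iv)]{KR2}, apply $A$ and use Lemma \ref{AB} to pass to $(A+B)u=0$ with the boundary conditions \reff{eq:1.2}, then expand in Fourier modes in $t$ and invoke \reff{geo} and \reff{nonres}; for the image, combine the Fredholm index-zero property with the adjoint identities \reff{AtildeA}, \reff{BtildeB}, $(\tilde A+\tilde B){\bf w}=0$ and the adjoint boundary conditions \reff{adbc}, extending the pairing computation from $u\in\CC_n^1$ to $u\in\CC_n$ by density and continuity (the paper does this more tersely, so your extra care there is fine). The sign slip in Step 1 (``$Au=Bu$'' instead of $(A+B)u=0$, i.e.\ $Au=-Bu$) is harmless: with the correct sign your chain $(I-C(0,1))u=D(0,1)Au=-D(0,1)Bu=\d_uF(0,1,0)u$ gives $L_0u=0$ as intended.

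The genuine gap is the linear independence of the two functionals $f\mapsto\langle f,\tilde A{\bf w_1}\rangle$, $f\mapsto\langle f,\tilde A{\bf w_2}\rangle$, which you must have in order to conclude that the annihilator set has codimension exactly two; without it, the inclusion $\im L_0\subseteq\{f:\langle f,\tilde A{\bf w_k}\rangle=0\}$ plus $\codim\im L_0=2$ only gives one inclusion. Your proposed verification --- computing $\langle {\bf v_j},\tilde A{\bf w_k}\rangle$ and expecting $\delta_{jk}$ from \reff{normed}/\reff{alg} --- does not work: since ${\bf v_j}$ satisfies \reff{eq:1.2} and ${\bf w_k}$ satisfies \reff{adbc}, one gets $\langle {\bf v_j},\tilde A{\bf w_k}\rangle=\langle A{\bf v_j},{\bf w_k}\rangle=-\langle B{\bf v_j},{\bf w_k}\rangle=-\langle {\bf v_j},\tilde B{\bf w_k}\rangle=\langle {\bf v_j},\tilde A{\bf w_k}\rangle$, a tautology; the normalization \reff{Kro} controls $\langle {\bf v_j},{\bf w_k}\rangle$, not $\langle {\bf v_j},\tilde A{\bf w_k}\rangle$, and the latter is in general neither $\delta_{jk}$ nor even obviously nonzero. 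The paper's missing ingredient is to test instead with the auxiliary functions ${\bf \tilde v_k}\in\CC_n^1$ defined by $(I-C(0,1)){\bf \tilde v_k}=D(0,1){\bf v_k}$, which exist and are unique by Lemma \ref{Fredho}(i) (this is where assumption \reff{Fred} enters), satisfy \reff{eq:1.2}, and obey $A{\bf \tilde v_k}={\bf v_k}$ by Lemma \ref{AB}(i); then $\langle {\bf \tilde v_k},\tilde A{\bf w_l}\rangle=\langle A{\bf \tilde v_k},{\bf w_l}\rangle=\langle {\bf v_k},{\bf w_l}\rangle=\delta_{kl}$ by \reff{Kro}, which yields the needed nondegeneracy (assumption \reff{alg} enters only through making the normalization \reff{normed} possible). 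With this replacement your Step 2 closes exactly as in the paper.
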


\begin{proof}
Take $u \in \ker L_0$. Then, by \cite[Theorem 1.2(iv)]{KR2} we have $u \in \CC_n^1$.
Hence, because of Lemma \ref{AB} it holds
$
AL_0u=A(I-C(0,1)-\d_uF(0,0))u=(A+B)u=0,
$
i.e.,
$$
\partial_tu_j(x,t)  + a_j(x,0)\partial_xu_j(x,t) + \d_{u_j}b_j(x,0,0)u_j(x,t)  = 0.
$$
Moreover, $u$ satisfies the boundary conditions \reff{eq:1.2} because any function of the type
$C(\la,\om)v+\d_uF(\la,\om,0)v$ with arbitrary $v \in \CC_n$ satisfies those boundary conditions.
Doing the Fourier ansatz
$$
u(x,t)=\sum_{s \in \Z}u^s(x)e^{ist},
$$
we get the following boundary value problem for the coefficient $u^s$:
$$
\begin{array}{rcll}
\displaystyle
a_j(x,0)\frac{d}{dx}u_j^s(x)+\sum_{k=1}^n\d_{u_k}b_{j}(x,0,0)u_k^s(x) &=& -isu_j^s(x) ,& j=1,\ldots,n,\\
\displaystyle
u_j^s(0) &=& \displaystyle\sum\limits_{k=m+1}^nr_{jk}u_k^s(0),& j=1,\ldots,m,\\
\displaystyle
u_j^s(1) &=& \displaystyle\sum\limits_{k=1}^mr_{jk}u_k^s(1),& j=m+1,\ldots,n.
\end{array}
$$ 
By assumptions \reff{geo} and \reff{nonres}, this is equivalent to
$$
u^s=0 \mbox{ for all } s \in \Z \setminus \{-1,1\} \mbox{ and } u^{\pm 1}\in \mbox{span }\{\Re v^0, \Im v^0\},
$$
i.e., to $u \in \mbox{\rm span} \,\{\Re {\bf v},\Im {\bf v}\}$.
In particular, we have $(A+B){\bf v}=0$.
Similarly one shows that  
\beq
\label{tilde}
(\tilde{A}+\tilde{B}){\bf w}=0
\ee 
and that ${\bf w}$ satisfies the adjoint boundary conditions \reff{adbc}.

Now we show that $\im L_0 \subseteq\{f \in \CC_n:\; \langle f, \tilde{A}{\bf w_1} \rangle= \langle f, \tilde{A}{\bf w_2} \rangle=0\}$:
Take $f \in \im L_0$, i.e., $f=(I-C(0,1)-\d_uF(0,0))u$ with arbitrary $u \in \CC_n$. 
Using Lemma \ref{AB}, \reff{AtildeA}, the fact that  any function of the type
$C(\la,\om)v+\d_uF(\la,\om,0)v$ with a certain $v \in \CC_n$ satisfies \reff{eq:1.2}
and that ${\bf w}$ satisfies \reff{adbc}, we get
$$
\langle(C(0,1)+\d_uF(0,1,0))u,\tilde{A} {\bf w_j} \rangle=
\langle A(C(0,1)+\d_uF(0,1,0))u,{\bf w_j} \rangle=
-\langle Bu,{\bf w_j} \rangle=
-\langle u,\tilde{B}{\bf w_j} \rangle.
$$
Hence, \reff{tilde} yields
$
\langle f,\tilde{A} {\bf w_j} \rangle=\langle(I-C(0,1)-\d_uF(0,1,0))u,\tilde{A} {\bf w_j} \rangle=
\langle u,(\tilde{A}+\tilde{B}){\bf w_j} \rangle=0.
$

Finally we show that $\{f \in \CC_n:\; \langle f, \tilde{A}{\bf w_1} \rangle= \langle f, \tilde{A}{\bf w_2} 
\rangle=0\}\subseteq \im L_0$:
Because of Lemma \ref{Fredho} (i) there exist uniquely defined  functions ${\bf \tilde{v}_1}, {\bf \tilde{v}_2} \in \CC_n^1$
such that
\beq
\label{vtilde}
(I-C(0,1)){\bf \tilde{v}_k}=D{\bf v_k}, \; k=1,2.
\ee
Moreover, these functions satisfy the boundary conditions  \reff{adbc}. Therefore,
\beq
\label{deltajk}
\langle  {\bf \tilde{v}_k},\tilde{A}{\bf w_l} \rangle=\langle  A{\bf \tilde{v}_k},{\bf w_l} \rangle=
\langle  {\bf v_k},{\bf w_l} \rangle =\delta_{kl}
\ee
and, hence,
$
\dim \{f \in \CC_n:\; \langle f, \tilde{A}{\bf w_1} \rangle= \langle f, \tilde{A}{\bf w_2} \rangle=0\} \ge 2.
$
But $\im L_0$ is a closed subspace of codimension two in $\CC_n$ because of Lemma~\ref{Fredho}, therefore the claim follows.
\end{proof}

\begin{rem}
The case $m=0$ (and, similarly, $m=n$) is  not of interest for  Hopf bifurcation analysis. The 
reason is that 
it does not fit  \reff{geo}, what is one of the crucial standard assumptions in Hopf bifurcation
theorems. One can easily check that $\dim\ker L_0=0$ in this case. 
Then, by Lemma \ref{kerim}, the problem \reff{evp} with $\la=0$ does not have a pure imaginary pair of 
geometrically simple eigenvalues, what means that the assumption  \reff{geo} fails to be fulfilled.
\end{rem}

\subsection{Projectors and splitting of \reff{abstract}}

Lemma \ref{kerim} and \reff{deltajk} imply
that the linear bounded operator $P: \CC_n \to \CC_n$, which is defined by
\beq
\label{Pdef}
Pu:=\sum_{k=1}^2 \langle u,\tilde{A} {\bf w_k} \rangle {\bf \tilde{v}_k}
\ee
is a projection with $\ker P= \im L_0$, where the functions  ${\bf \tilde{v}_k}$
are implicitly defined in \reff{vtilde}.
Similarly, the 
linear bounded operator $Q: \CC_n \to \CC_n$, which is defined by
\beq
\label{Qdef}
Qu:=\sum_{k=1}^2 \langle u,{\bf w_k} \rangle {\bf v_k}
\ee
is a projection with $\im Q= \ker L_0$.  

Now we are going to solve the equation \reff{abstract} by means of the ansatz
$$
u=v+w, \; v \in \ker L_0 = \im Q, \; w \in \ker Q.
$$
Hence, we have to solve a coupled system consisting of the finite dimensional equation
\beq\label{finite}
P((I-C(\la,\om))(v+w)-F(\la,\om,v+w))=0
\ee
and the infinite dimensional equation
\beq\label{infinite}
(I-P)((I-C(\la,\om))(v+w)-F(\la,\om,v+w))=0.
\ee

\subsection{Local solution of \reff{infinite}}\label{sec:infinite}

In this subsection we will solve \reff{infinite} locally with respect to $w \approx 0$ for parameters 
$\om \approx 1$, $\la \approx 0$, and $v \approx 0$. Unfortunately, the classical implicit function theorem
cannot be used for that purpose because the left-hand side of  \reff{infinite} is not $C^1$-smooth. More exactly, the map
$(\la,\om) \in \R^2 \mapsto (I-P)(I-C(\la,\om)-\d_uF(\la,\om,0)) \in \LL(\CC_n)$ is not continuous.
The reason is that for operators of ``shift type'' like $(S_\tau u)(x,t):=u(x,t+\tau)$
the map $\tau \in \R \mapsto S_\tau \in \LL(\CC_n)$ is not continuous (with respect to the operator norm in $\LL(\CC_n)$).

There exist several generalizations of the  implicit function theorem in which the map ``control parameter $\mapsto$ 
linearization with respect to the state parameter'' is allowed to  be discontinuous with respect to the operator norm, see, e.g.,
\cite[Theorem 7]{Appell}, \cite[Theorem 2.1]{Renardy1}.
However, it turns out that they do not fit to our problem, so we are going to adapt ideas of  \cite{Magnus}
and \cite[Theorem 2.1]{RO}.

\begin{lemma}
\label{Magnus}
There exist $\delta_2 \in(0,\delta_1)$ and $c_2>0$ such that for all $\la \in [-\delta_2,\delta_2]$, $\om \in [1-\delta_2,1+\delta_2]$ 
and $u \in \CC_n$ with $\|u\|_\infty \le \delta_2$ it holds
$$
\|(I-P)(I-C(\la,\om)-\d_uF(\la,\om,u))w\|_\infty \ge c_2 \|w\|_\infty \mbox{ for all } w \in \ker Q.
$$
\end{lemma}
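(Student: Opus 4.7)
The plan is to combine the Fredholm structure of $L_0 := I - C(0,1) - \d_uF(0,1,0)$, established by Lemma~\ref{Fredho}(ii) and Lemma~\ref{kerim}, with a proof by contradiction that extracts a $\CC_n$-convergent subsequence of a would-be counterexample $\{w_k\}$ despite the fact that $(\la,\om) \mapsto C(\la,\om)$ is only strongly---not norm---continuous. At the base point $(\la,\om,u) = (0,1,0)$ the conclusion is immediate: Lemma~\ref{kerim} identifies $\ker L_0 = \im Q$ and $\im L_0 = \ker P$, so $L_0|_{\ker Q} : \ker Q \to \ker P$ is a continuous linear bijection of closed subspaces of $\CC_n$ and the open mapping theorem yields $c_0 > 0$ with $\|L_0 w\|_\infty \ge c_0 \|w\|_\infty$ for every $w \in \ker Q$. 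Since $L_0 w \in \im L_0 = \ker P$, one has $(I-P) L_0 w = L_0 w$, which gives the estimate at the base point.

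The reduction from general small $u$ to the case $u = 0$ is routine. A direct Volterra estimate using the smoothness of $f$ yields $\|\d_uF(\la,\om,u) - \d_uF(\la,\om,0)\|_{\LL(\CC_n)} = O(\|u\|_\infty)$ uniformly in $(\la,\om)$ close to $(0,1)$, so the $u$-dependence is a small operator-norm perturbation that can be absorbed into $c_2$. Assume for contradiction that the estimate at $u = 0$ fails near $(0,1)$: there exist $(\la_k,\om_k) \to (0,1)$ and $w_k \in \ker Q$ with $\|w_k\|_\infty = 1$ such that $(I-P)(I - C(\la_k,\om_k) - \d_uF(\la_k,\om_k,0)) w_k \to 0$ in $\CC_n$. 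Since $P$ has two-dimensional range and the sequence $(I - C(\la_k,\om_k) - \d_uF(\la_k,\om_k,0)) w_k$ is uniformly bounded in $\CC_n$ (by Lemma~\ref{Fredho}(i) and \reff{cont3}), a subsequence satisfies $P(I - C(\la_k,\om_k) - \d_uF(\la_k,\om_k,0)) w_k \to h \in \im P$, whence $(I - C(\la_k,\om_k) - \d_uF(\la_k,\om_k,0)) w_k \to h$ in $\CC_n$.

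The hard part is to extract a $\CC_n$-convergent subsequence of $\{w_k\}$ in order to pass to the limit in the equation above; this is where the strategy of \cite{Magnus} and \cite[Theorem 2.1]{RO} is adapted. The key observation is that at $u = 0$ the commutation identities in \reff{Abl} take the clean form $\d_t C(\la,\om) w = C(\la,\om) \d_t w$ and $\d_t \d_uF(\la,\om,0) w = \d_uF(\la,\om,0) \d_t w$, so that $\d_t w_k$ formally solves the same equation as $w_k$ itself. Combining this commutation with the uniform invertibility of $I - C(\la,\om)$ on $\CC_n^1$ from Lemma~\ref{Fredho}(i), a bootstrap yields a uniform $\CC_n^1$-bound on $\{w_k\}$, and Arzel\`a--Ascoli then delivers a subsequence $w_k \to w_\infty$ in $\CC_n$. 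Writing $C(\la_k,\om_k) w_k - C(0,1) w_\infty = C(\la_k,\om_k)(w_k - w_\infty) + [C(\la_k,\om_k) - C(0,1)] w_\infty$, and analogously for $\d_uF$, and invoking the strong continuity \reff{cont1}--\reff{cont2} on the second summand together with the $\CC_n$-convergence $w_k \to w_\infty$ on the first, we pass to the limit to obtain $L_0 w_\infty = h$. Since $L_0 w_\infty \in \ker P$ and $h \in \im P$, we must have $h = 0$ and $w_\infty \in \ker L_0 = \im Q$; but continuity of $Q$ together with $Q w_k = 0$ yields $w_\infty \in \ker Q$, hence $w_\infty = 0$, contradicting $\|w_\infty\|_\infty = \lim\|w_k\|_\infty = 1$.
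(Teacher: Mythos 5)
There is a genuine gap, and it sits exactly at the step you yourself call ``the hard part''. Your claim that the commutation identities $\d_tC(\la,\om)w=C(\la,\om)\d_tw$, $\d_t\d_uF(\la,\om,0)w=\d_uF(\la,\om,0)\d_tw$ plus the uniform invertibility of $I-C(\la,\om)$ on $\CC_n^1$ give, by ``bootstrap'', a uniform $\CC_n^1$-bound on the counterexample sequence $\{w_k\}$ is not justified. First, $w_k$ is only an element of $\CC_n$ with $\|w_k\|_\infty=1$; the commutation identities \reff{Abl} are valid only for arguments in $\CC_n^1$, so ``$\d_tw_k$ solves the same equation'' has no meaning a priori. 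Second, even granting formal differentiability, the approximate equation reads $(I-C(\la_k,\om_k)-\d_uF(\la_k,\om_k,0))w_k=g_k$ with $g_k$ controlled only in the $\|\cdot\|_\infty$-norm (its $(I-P)$-part tends to zero in $\CC_n$, nothing more), so differentiating it in $t$ gives no bound on $\d_tg_k$; and the operator you would need to invert on the derivative level, $I-C-\d_uF$ restricted to $\ker Q$ with bounds uniform in $(\la,\om)$ near $(0,1)$, is precisely what the lemma is trying to establish -- the bootstrap is circular. Note also that a single application of $C(\la,\om)$ (a shift along characteristics) or of $\d_uF(\la,\om,0)$ gains no time regularity whatsoever, so no one-step regularization is available.

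The paper's proof is built around a different and essential mechanism that your argument never touches: it shows that $w^r$ coincides, up to a term tending to zero in $\CC_n$, with $\bigl((I-C(\la^r,\om^r))^{-1}\d_uF(\la^r,\om^r,0)\bigr)^2w^r$, and then proves that the \emph{compositions} $\d_uF(\la^r,\om^r,0)^2$ and $\d_uF(\la^r,\om^r,0)C(\la^r,\om^r)$ map $\CC_n$ boundedly into $\CC_n^1$, uniformly in $r$. This smoothing of the composed (not the single) operators rests on two structural facts: the diagonal part of $\d_uf$ vanishes, so \reff{dF1} involves only off-diagonal couplings, and the characteristic speeds are pairwise distinct, assumption \reff{hyp}, which via \reff{dxi} allows an integration by parts along characteristics converting $\d_tu$ into terms bounded by $\|u\|_\infty$. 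Your proposal makes no use of \reff{hyp}, which is a strong indication the route cannot work: without the transversality of distinct characteristics there is no compactness and the precompactness of $\{w_k\}$ that your limit-passing argument requires simply fails. (The paper additionally needs a weak-convergence argument and the compactness of $\d_uF(0,1,0)(I-C(0,1))^{-1}\d_uF(0,1,0)$ from \cite{KR3} to identify the limit as zero; in your scheme this would be subsumed by strong convergence, but strong convergence is exactly what is not proved.) The surrounding parts of your proposal -- the reduction to $u=0$ via \reff{cont3}, the use of $\dim\im P<\infty$ to upgrade $(I-P)(\cdot)w_k\to0$ to convergence of the full expression, and the final identification $w_\infty\in\ker Q\cap\im Q=\{0\}$ -- are fine and parallel the paper, but they do not repair the missing compactness step.
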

\begin{proof}
Suppose the contrary. Then there exist sequences $\la^1,\la^2,\ldots \in \R$ with $\la^r \to 0$; $\om^1,\om^2,\ldots \in \R$
with $\om^r \to 1$; $u^1,u^2,\ldots \in \CC_n$ with $\|u^r\|_\infty \to 0$ and $w^1,w^2,\ldots \in  \ker Q$ with
\beq
\label{w=1}
\|w^r\|_\infty=1
\ee
and
\beq
\label{Fto0}
\|(I-P)(I-C(\la^r,\om^r)-\d_uF(\la^r,\om^r,u^r))w^r\|_\infty \to 0.
\ee

We intend to show that 
a subsequence of $\left\{w^r: \; r \in \N\right\}$ converges to zero  in $\CC_n$,
 getting a contradiction to \reff{w=1}.
The proof is divided into a sequence of claims.

\begin{claim} \label{cl:1} The convergence \reff{Fto0}  holds with $u^1=u^2=\ldots=0$, namely
$$
\|(I-P)(I-C(\la^r,\om^r)-\d_uF(\la^r,\om^r,0))w^r\|_\infty \to 0.
$$
\end{claim}
\begin{subproof}
Because of \reff{cont3} we have 
$
\|(\d_uF(\la^r,\om^r,u^r)-\d_uF(\la^r,\om^r,0))w^r\|_\infty \le \mbox{const} \|u^r\|_\infty,
$
where the constant does not depend on $r$. Hence, the assumptions $\|u^r\|_\infty \to 0$ and \reff{Fto0} yield the claim.
\end{subproof}

\begin{claim} \label{cl:2} The sequence  
$
\left(C(\la^r,\om^r)-C(0,1)\right)w^r+\left(\d_uF(\la^r,\om^r,0)-\d_uF(0,1,0)\right)w^r 
$
converges weakly to zero in $L^2\left((0,1)\times(0,2\pi);\R^n\right)$.
\end{claim}
\begin{subproof}
Denote by $t_j(\xi,x,\cdot,\la,\om)$ the inverse function to the function $\tau_j(\xi,x,\cdot,\la,\om)$, i.e.,
$$
t_j(\xi,x,\tau,\la,\om)=\tau-\om\int_x^\xi\frac{d \eta}{a_j(\eta,\la)}
$$
(cf. \reff{charom}). Take a test function $\vphi \in \CC_n^1$. Then we have
\begin{eqnarray*}
\lefteqn{
\int_0^1\int_0^{2\pi}c_j(x_j,x,\la^r)w_k^r(x_j,\tau_j(x_j,x,t,\la^r,\om^r))\vphi_j(x,t)dtdx}\\
&&-\int_0^1\int_0^{2\pi}c_j(x_j,x,0)w_k^r(x_j,\tau_j(x_j,x,t,0,1))\vphi_j(x,t)dtdx\\
&& 
=\int_0^1\int_0^{2\pi}c_j(x_j,x,\la^r)\vphi_j(x,t_j(x_j,x,\tau,\la^r,\om^r))w_k^r(x_j,\tau) d\tau dx\\
&&-\int_0^1\int_0^{2\pi}c_j(x_j,x,0)\vphi_j(x,t_j(x_j,x,\tau,0,1)) 
w_k^r(x_j,\tau) d\tau dx,
\end{eqnarray*}
and this tends to zero for $r \to \infty$. So we get
$
\langle \left(C(\la^r,\om^r)-C(0,1)\right)w^r,\vphi \rangle \to 0.
$
Similarly we have
\begin{eqnarray*}
\lefteqn{
\int_0^1\int_{x_j}^x\int_0^{2\pi}\left(\frac{c_j(\xi,x,\la^r)}{a_j(\xi,\la^r)}
\d_{u_k}f_j(\xi,\la^r,0)
w_k^r(\xi,\tau_j(\xi,x,t,\la^r,\om^r))\right.}\\
&&
\left.-\frac{c_j(\xi,x,0)}{a_j(\xi,0)}\d_{u_k}
f_j(\xi,0,0)w_k^r(\xi,\tau_j(\xi,x,t,0,1))\right)\vphi_j(x,t)dt d\xi dx\\
&&
=\int_0^1\int_{x_j}^x\int_0^{2\pi}\left(\frac{c_j(\xi,x,\la^r)}{a_j(\xi,\la^r)}
\d_{u_k}f_j(\xi,\la^r,0)
\vphi_j(x,t_j(\xi,x,\tau,\la^r,\om^r))\right.\\
&&\left.-\frac{c_j(\xi,x,0)}{a_j(\xi,0)}\d_{u_k}f_j(\xi,0,0)
\vphi_j(x,t_j(\xi,x,\tau,0,1))\right)w_k^r(x,\tau)d \tau d\xi dx,
\end{eqnarray*}
and this tends to zero for $r \to \infty$. Therefore,
$
\langle \left(\d_uF(\la^r,\om^r,0)-\d_uF(0,1,0)\right)w^r,\vphi \rangle 
$ 
tends to zero.
\end{subproof}

\begin{claim} \label{cl:5} The operators $\d_uF(\la^r,\om^r,0)^2$ and 
$\d_uF(\la^r,\om^r,0)C(\la^r,\om^r)$ 
map continuously
$\CC_n$  into $\CC_n^1$ and 
\begin{eqnarray*}
%\label{subseq1}
\|\d_uF(\la^r,\om^r,0)^2\|_{ \LL(\CC_n;\CC_n^1)}+
\|\d_uF(\la^r,\om^r,0)C(\la^r,\om^r)\|_{ \LL(\CC_n;\CC_n^1)}\le \mbox{const}.
\end{eqnarray*}
\end{claim}
\begin{subproof}
We first give the proof for the operators $\d_uF(\la^r,\om^r,0)^2$. 
Because of \reff{Abl} we have for all $u \in \CC_n^1$
\beq
\label{du}
\begin{array}{rcl}
\d_x\d_uF(\la^r,\om^r,0)u&=&G^ru+H^r\d_tu,\\
\d_t\d_uF(\la^r,\om^r,0)u&=&\d_uF(\la^r,\om^r,0)\d_tu,
\end{array}
\ee
where the linear bounded operators $G^r,H^r: \CC_n \to \CC_n$ are defined 
by $G^r:=\d_uF_0^x(\la^r,\om^r,0)$ and $H^r:=F_1^x(\la^r,\om^r,0)$.
Therefore,
\begin{eqnarray*}
&&\d_t\d_uF(\la^r,\om^r,0)^2u=\d_uF(\la^r,\om^r,0)^2\d_tu, \\
&&\d_x\d_uF(\la^r,\om^r,0)^2u=G^r\d_uF(\la^r,\om^r,0)u+H^r\d_uF(\la^r,\om^r,0)\d_tu.
\end{eqnarray*}
hence,
\begin{eqnarray*}
\lefteqn{
\|\d_t\d_uF(\la^r,\om^r,0)^2u\|_\infty+\|\d_x\d_uF(\la^r,\om^r,0)^2u\|_\infty}\\
&& \le \mbox{const}(\|u\|_\infty+\|\d_uF(\la^r,\om^r,0)^2 \d_tu\|_\infty+
\|H^r\d_uF(\la^r,\om^r,0)\d_tu\|_\infty).
\end{eqnarray*}
Here we used \reff{cont3} and the fact that the operators $G^r$ 
and $H^r$ are uniformly bounded with respect to $r$ in the uniform operator norm.
Hence, 
we have to show that 
\beq
\label{subseq4}
\|\d_uF(\la^r,\om^r,0)^2\d_tu\|_\infty+\|H^r\d_uF(\la^r,\om^r,0)\d_tu\|_\infty\le \mbox{const}\|u\|_\infty
\mbox{ for all } u \in \CC_n^1.
\ee

Let us start with  $\d_uF(\la^r,\om^r,0)^2 \d_tu$. Because of \reff{dF1} we have
\begin{eqnarray}
\lefteqn{
(\d_uF(\la^r,\om^r,0)^2 \d_tu)_j(x,t)}\nonumber\\
&&=
\int_{x_j}^x \frac{c_j(\xi,x,\la^r)}{a_j(\xi,\la^r)}\sum_{k=1\atop k\not=j}^n\d_{u_k}b_j(\xi,\la^r,0)
v_k^r(\xi,\tau_j(\xi,x,t,\lambda^r,\om^r)))d\xi
\label{int1}
\end{eqnarray}
with
\beq
\label{int2}
v_k^r(\xi,\tau):=
\int_{x_j}^\xi \frac{c_k(\eta,\xi,\la^r)}{a_k(\eta,\la^r)}\sum_{l=1\atop l\not=k}^n\d_{u_l}b_k(\eta,\la^r,0)
\d_tu_l(\eta,\tau_k(\eta,\xi,\tau,\lambda^r,\om^r))d\eta.
\ee
Inserting \reff{int2} into \reff{int1}, we get the integrals
\begin{eqnarray}
\lefteqn{
\int_{x_j}^x\int_{x_j}^\xi d_{jkl}^r(\xi,\eta,x)\d_tu_l(\eta,\tau_k(\eta,\xi,\tau_j(\xi,x,t,\lambda^r,\om^r),\lambda^r,\om^r))d\eta d\xi}
\nonumber\\
&&=\int_{x_j}^x\int_\eta^{x_j}
d_{jkl}^r(\xi,\eta,x)\d_tu_l(\eta,\tau_k(\eta,\xi,\tau_j(\xi,x,t,\lambda^r,\om^r),\lambda^r,\om^r))d\xi d\eta
\label{xieta}
\end{eqnarray}
with $j \not= k$ and  $l \not= k$ and
$$
d_{jkl}^r(\xi,\eta,x):=\frac{c_j(\xi,x,\la^r)c_k(\eta,\xi,\la^r)\d_{u_k}b_j(\xi,\la^r,0)\d_{u_l}b_k(\eta,\la^r,0)}
{a_j(\xi,\la^r)a_k(\eta,\la^r)}.
$$
Moreover, from \reff{charom} it follows
\begin{eqnarray}
\lefteqn{
\frac{d}{d\xi}u_l(\eta,\tau_k(\eta,\xi,\tau_j(\xi,x,t,\lambda^r,\om^r),\lambda^r,\om^r))}\nonumber\\
&&=\om^r\left(\frac{1}{a_j(\xi,\la^r)}-\frac{1}{a_k(\xi,\la^r)}\right)
\d_tu_l(\eta,\tau_k(\eta,\xi,\tau_j(\xi,x,t,\lambda^r,\om^r),\lambda^r,\om^r)).
\label{dxi}
\end{eqnarray}
By \reff{dxi} and  assumption \reff{hyp}, the right-hand side of \reff{xieta} equals 
$$
\frac{1}{\om^r}\int_0^x\int_\eta^x
\frac{a_j(\xi,\la^r)a_k(\xi,\la^r)}{a_k(\xi,\la^r)-a_j(\xi,\la^r)}
d_{jkl}^r(\xi,\eta,x)\frac{d}{d\xi}u_l(\eta,\tau_k(\eta,\xi,\tau_j(\xi,x,t,\lambda^r,\om^r),\lambda^r,\om^r))d\xi d\eta.
$$
Integrating by parts in the inner integral (with respect to $\xi$) we see that the absolute values of these integrals 
can be estimated by a constant times  $\|u\|_\infty$, where the constant does not depend on
$x$, $t$, $r$, and $u$.

Now, let us consider $H^r\d_uF(\la^r,\om^r,0)  \d_tu$. We have
\begin{eqnarray*}
\lefteqn{
(H^r\d_uF(\la^r,\om^r,0) \d_tu)_j(x,t)}\nonumber\\
&&=
-\frac{\om^r}{a_j(x,\la^r)}\int_{x_j}^x \frac{c_j(\xi,x,\la^r)}{a_j(\xi,\la^r)}\sum_{k=1\atop k\not=j}^n\d_{u_k}b_j(\xi,\la^r,0)
v_k^r(\xi,\tau_j(\xi,x,t,\lambda^r,\om^r)))d\xi
\end{eqnarray*}
with \reff{int2}. Proceeding as above one shows that these integrals can be estimated by a constant times  $\|u\|_\infty$, where the constant does not depend on
$x$, $t$, $r$, and $u$.

Now we give the proof of the claim for the operators $\d_uF(\la^r,\om^r,0)C(\la^r,\om^r)$.
Because of $\d_tC(\la^r,\om^r)u=C(\la^r,\om^r)\d_tu$ (cf. \reff{Abl}) and of 
\reff{du} we have for all $u \in \CC_n^1$ that
\begin{eqnarray*}
\d_x\d_uF(\la^r,\om^r,0)C(\la^r,\om^r)u&=&G^rC(\la^r,\om^r)u+H^rC(\la^r,\om^r)\d_tu,\\
\d_t\d_uF(\la^r,\om^r,0)C(\la^r,\om^r)u&=&\d_uF(\la^r,\om^r,0)C(\la^r,\om^r)\d_tu.
\end{eqnarray*}
It remains to show that 
$$
\|H^rC(\la^r,\om^r)\d_tu\|_\infty+\|\d_uF(\la^r,\om^r,0)C(\la^r,\om^r)\d_tu\|_\infty \le \mbox{const }\|u\|_\infty
\mbox{ for all } u \in \CC_n^1.
$$
Because of \reff{Cdef} and \reff{dF1} 
we have
$$
\left(\d_uF(\la^r,\om^r,0)C(\la^r,\om^r)\d_tu\right)_j(x,t)=
\sum_{k=1\atop k\not=j}^m\sum_{l=m+1}^nf^r_{jkl}(x,t)+
\sum_{k=m+1\atop k\not=j}^n\sum_{l=1}^mf^r_{jkl}(x,t)
$$
with
\begin{eqnarray*}
\lefteqn{
\displaystyle f^r_{jkl}(x,t)}\\
&\displaystyle :=
\int^{x_j}_x\frac{r_{kl}c_j(\xi,x,\la^r)c_k(x_k,\xi,\la^r)
\d_{u_k}b_k(\xi,\la^r,0)}{a_j(\xi,\la^r)}\d_tu_l(x_k,\tau_k(x_k,\xi,\tau_j(\xi,x,t,\la^r,\om^r),\lambda^r,\om^r))d\xi.&
\end{eqnarray*}
Using \reff{dxi} and assumption \reff{hyp}, one can integrate by parts in this  integral in order to see that 
the absolute values of these integrals 
can be estimated by a constant times $\|u\|_\infty$, where the constant does not depend on
$x$, $t$, $r$, and $u$. 
\end{subproof}

\begin{claim} \label{cl:6}
The set  $\left\{\left((I-C(\la^r,\om^r))^{-1}\d_uF(\la^r,\om^r,0)\right)^2w^r: \; r \in \N\right\}$
is precompact in $\CC_n$.
\end{claim}

\begin{subproof}
 By Claim \ref{cl:5} the sequence $\d_uF(\la^r,\om^r,0)^2w^r$ is bounded in $\CC_n^1$. 
Hence,  Lemma \ref{Fredho} (i) yields the same for the sequence
$
(I-C(\la^r,\om^r))^{-1}\d_uF(\la^r,\om^r,0)^2w^r.
$
Similarly, because of  Lemma \ref{Fredho} (i) and \reff{cont3}, 
the sequence $(I-C(\la^r,\om^r))^{-1}\d_uF(\la^r,\om^r,0)w^r$
is bounded in $\CC_n$.
Hence,  due to Claim \ref{cl:5}, we have same boundedness property for the sequence
$$
(I-C(\la^r,\om^r))^{-1}\d_uF(\la^r,\om^r,0)C(\la^r,\om^r)(I-C(\la^r,\om^r))^{-1}
\d_uF(\la^r,\om^r,0)w^r. 
$$
Therefore, the sequence
\begin{eqnarray*}
\lefteqn{
(I-C(\la^r,\om^r))^{-1}\d_uF(\la^r,\om^r,0)(I-C(\la^r,\om^r))^{-1}\d_uF(\la^r,\om^r,0)w^r}\\
&&=(I-C(\la^r,\om^r))^{-1}\d_uF(\la^r,\om^r,0)^2w^r\\
&&+(I-C(\la^r,\om^r))^{-1}\d_uF(\la^r,\om^r,0)C(\la^r,\om^r)
(I-C(\la^r,\om^r))^{-1}\d_uF(\la^r,\om^r,0)w^r 
\end{eqnarray*}
is bounded in $\CC_n^1$. Hence, the Arcela-Ascoli theorem yields  the claim.
\end{subproof}

\begin{claim} \label{cl:7}
A subsequence of $\left\{w^r: \; r \in \N\right\}$ converges to zero in $\CC_n$.
\end{claim}

\begin{subproof}
Denote $g^r:=(I-C(\la^r,\om^r)-\d_uF(\la^r,\om^r,0))w^r$. 
By Claim \ref{cl:1},  Lemma \ref{Fredho}~(i), and \reff{cont3}, we have 
%$\|g^r\|_\infty \to 0$. Hence, Lemma \ref{Fredho} (i) yields
\begin{eqnarray}
\lefteqn{
\left(I+(I-C(\la^r,\om^r))^{-1}\d_uF(\la^r,\om^r,0)\right)(I-C(\la^r,\om^r))^{-1}
(I-P)g^r}
%\left(I-C(\la^r,\om^r)-\d_uF(\la^r,\om^r,0)\right)w^r}
\nonumber\\
&&
=\left(I-\left[(I-C(\la^r,\om^r))^{-1}\d_uF(\la^r,\om^r,0)\right]^2\right)w^r\nonumber\\
&&
-\left(I+(I-C(\la^r,\om^r))^{-1}\d_uF(\la^r,\om^r,0)\right)(I-C(\la^r,\om^r))^{-1}Pg^r\to 0
\mbox{ in } \CC_n.\label{P}
\end{eqnarray}
Moreover, by Claim \ref{cl:6}, a subsequence of 
$\left(I-\left[(I-C(\la^r,\om^r))^{-1}\d_uF(\la^r,\om^r,0)\right]^2\right)w^r$
converges in $\CC_n$. Further, due to \reff{cont3}, the sequence 
$g^r$ is bounded  in $\CC_n$. Then, taking into account  Lemma \ref{Fredho} (i), \reff{cont3},
and the fact that
$\dim P<\infty$, we conclude that
a subsequence of 
$\left(I+(I-C(\la^r,\om^r))^{-1}\d_uF(\la^r,\om^r,0)\right)(I-C(\la^r,\om^r))^{-1}Pg^r$
converges  in $\CC_n$. Now we get from \reff{P} that a subsequence of $w^r$
(which will be denoted by $w^r$ again) converges  in $\CC_n$, i.e., 
\beq\label{conv_w}
w^r  \to  w^* \mbox{ for some } w^* \in \ker Q. 
\ee
On the other hand, Claims \ref{cl:1} and \ref{cl:2} imply that
\beq
\label{I-Pweak}
(I-P)\left(I-C(0,1)-\d_uF(0,1,0)\right)w^r \rightharpoonup 0  \mbox{ in } 
L^2\left((0,1)\times(0,2\pi);\R^n\right).
\ee
Convergences \reff{conv_w} and \reff{I-Pweak} entail that
$$
(I-P)\left(I-C(0,1)-\d_uF(0,1,0)\right)w^*=0,
$$
i.e.,   
$w^* \in \ker Q \cap \im Q$, hence, $w^*=0$ as desired.
\end{subproof}

%\begin{claim} \label{cl:3} 
%The sequence $\left\{w^r: \; r \in \N\right\}$ converges to zero in $\CC_n$ as $r\to\infty$.
%\end{claim}
%\begin{subproof}
%Because of $\dim \im P < \infty$ there exist $g^* \in \im P$ and a subsequence of
%$w^1,w^2,\ldots$ (which will be denoted by $w^1,w^2,\ldots$ again) such that
%\beq
%\label{Pstrong}
%P\left(I-C(0,1)-\d_uF(0,1,0)\right)w^r \to g^*.
%\ee
%\beq
%\label{schwach2}
%\left(I-C(0,1)-\d_uF(0,1,0)\right)w^r \rightharpoonup g^*  \mbox{ in } 
%L^2\left((0,1)\times(0,2\pi);\R^n\right).
%\ee
%By Claim \ref{cl:7},
%$
%w^r  \to  w^* 
%$
%for some $w^* \in \ker Q$.  Combining this with
%\reff{schwach2} gives
%$$
%\left(I-C(0,1)-\d_uF(0,1,0)\right)w^*=g^*,
%$$
%\end{subproof}
The proof of Lemma \ref{Magnus} is therewith complete.
\end{proof}

Now we are well-prepared for solving \reff{infinite} locally with respect to $w$ by an 
implicit-function-theorem-type argument.

For $\delta>0$ we denote
$$
\B_\delta:=\{v \in \CC_n: \; \|v\|_\infty \le \delta\}.
$$

\begin{lemma}\label{infIFT}
(i) There exists $\delta_3 \in (0,\delta_2)$ such that for all $\la \in
[-\delta_3,\delta_3]$, $\om \in [1-\delta_3,1+\delta_3]$, and $v \in \B_{\delta_3} \cap\im Q$
there exists exactly one solution 
$w=\hat{w}(\la,\om,v)$ to \reff{infinite} with $w \in \B_{\delta_3} \cap \ker Q$. 

(ii) There exists $c_3>0$ such that for all $\la \in [-\delta_3,\delta_3]$, $\om \in [1-\delta_3,1+\delta_3]$,
and  $v \in  \B_{\delta_3}\cap \im Q$ it holds
$$
\|\hat{w}(\la,\om,v)\|_\infty \le c_3 \|v\|^2_\infty.
$$

(iii) The map $(\la,\om,v)  \in [-\delta_3,\delta_3] \times  [1-\delta_3,1+\delta_3]
\times  (\B_{\delta_3}\cap \im Q) \mapsto \hat{w}(\la,\om,v) \in  \CC_n$ is continuous.

(iv) For all $\la \in [-\delta_3,\delta_3]$ and  $\om \in [1-\delta_3,1+\delta_3]$
the maps $v \in  \B_{\delta_3}\cap\im Q \mapsto \hat{w}(\la,\om,v) \in \CC_n$ are   $C^\infty$-smooth.
\end{lemma}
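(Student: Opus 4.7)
The plan is to combine the uniform lower bound of Lemma~\ref{Magnus} with a Banach fixed point argument, rather than appealing to the classical implicit function theorem (which is unavailable here since $(\la,\om)\mapsto C(\la,\om)\in\LL(\CC_n)$ and $(\la,\om)\mapsto\d_uF(\la,\om,0)\in\LL(\CC_n)$ are not continuous in the operator norm). Throughout I write
$$
N(\la,\om,v,w):=(I-P)\bigl[(I-C(\la,\om))(v+w)-F(\la,\om,v+w)\bigr]
$$
for the left hand side of \reff{infinite}, and $M(\la,\om,u):=(I-P)(I-C(\la,\om)-\d_uF(\la,\om,u))$ regarded as a map $\ker Q\to\ker P$ for the linearization in the $w$-direction.

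Step~1 upgrades Lemma~\ref{Magnus} to genuine invertibility of $M(\la,\om,u)$ with $\|M(\la,\om,u)^{-1}\|_{\LL(\ker P,\ker Q)}\le 1/c_2$. The operator $I-C(\la,\om)-\d_uF(\la,\om,u)$ is Fredholm of index zero on $\CC_n$ for $(\la,\om,u)$ close to $(0,1,0)$: at $u=0$ this is Lemma~\ref{Fredho}(ii), and for small $u\ne 0$ it follows from $\|\d_uF(\la,\om,u)-\d_uF(\la,\om,0)\|_{\LL(\CC_n)}=O(\|u\|_\infty)$ (a consequence of \reff{cont3}) combined with the stability of the Fredholm index under small operator-norm perturbations. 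Since $P$ and $Q$ are finite-rank projections of rank~$2$, a routine restriction/corestriction argument shows that $M(\la,\om,u):\ker Q\to\ker P$ is again Fredholm of index zero, and together with the injectivity bound $\|M(\la,\om,u)w\|_\infty\ge c_2\|w\|_\infty$ of Lemma~\ref{Magnus} this forces bijectivity with the claimed norm estimate.

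Step~2 solves \reff{infinite} by a contraction. For fixed $\la,\om,v$ close to $(0,1,0)$ I set
$$
\Psi_{\la,\om,v}(w):=w-M(\la,\om,v)^{-1}N(\la,\om,v,w),\qquad w\in\B_{\delta_3}\cap\ker Q,
$$
whose fixed points are exactly the solutions of \reff{infinite}. Direct differentiation gives
$$
\d_w\Psi_{\la,\om,v}(w)=M(\la,\om,v)^{-1}(I-P)\bigl[\d_uF(\la,\om,v+w)-\d_uF(\la,\om,v)\bigr],
$$
whose operator norm on $\ker Q$ is $O(\|w\|_\infty)$ uniformly in $\la,\om,v$ by the $C^2$-estimate \reff{cont3}, so $\Psi_{\la,\om,v}$ contracts $\B_{\delta_3}\cap\ker Q$ into itself for $\delta_3$ small. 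For the initial value $\Psi_{\la,\om,v}(0)=-M(\la,\om,v)^{-1}N(\la,\om,v,0)$ I use that $v\in\im Q=\ker L_0$, so that the linear-in-$v$ contribution to $(I-C(\la,\om))v-F(\la,\om,v)$ equals $(I-C(\la,\om)-\d_uF(\la,\om,0))v$, which vanishes at $(\la,\om)=(0,1)$ and is smooth in $(\la,\om)$ because $v$ lies in the two-dimensional subspace spanned by the smooth generators ${\bf v_1},{\bf v_2}$; this gives $\|N(\la,\om,v,0)\|_\infty=O(\|v\|_\infty^2)$, and the Banach fixed point theorem produces a unique $\hat w(\la,\om,v)\in\B_{\delta_3}\cap\ker Q$ satisfying (i) and (ii).

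For continuity (iii), uniform convergence of the Picard iterates $w^{(0)}=0$, $w^{(k+1)}=\Psi_{\la,\om,v}(w^{(k)})$ to $\hat w(\la,\om,v)$, together with the joint continuity of $(\la,\om,v,w)\mapsto N(\la,\om,v,w)$ guaranteed by \reff{cont1} and \reff{cont2}, transfers continuity to $\hat w$. For $C^\infty$-smoothness in $v$ (iv), I fix $(\la,\om)$ and invoke the classical implicit function theorem: by \reff{cont2} and smoothness of Nemytskii operators, $(v,w)\mapsto N(\la,\om,v,w)$ is $C^\infty$ on $\CC_n\times\CC_n$, and $\d_wN(\la,\om,0,0)=M(\la,\om,0)$ is invertible by Step~1; the resulting local $C^\infty$ solution must coincide with $\hat w(\la,\om,\cdot)$ on $\B_{\delta_3}\cap\im Q$ by the uniqueness from Step~2. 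I expect the most delicate points to be Step~1, where invertibility must be extracted from Lemma~\ref{Magnus}'s lower bound together with index zero rather than from a small-perturbation argument, and the quadratic bound in~(ii), which relies crucially on $v$ ranging in a finite-dimensional subspace of smooth functions so that operator-valued maps applied to $v$ recover the smoothness in $(\la,\om)$ that $N$ itself lacks.
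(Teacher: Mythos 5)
Your overall route coincides with the paper's: both precondition \reff{infinite} by the inverse of the partial linearization in $w$ (whose invertibility comes from the lower bound of Lemma \ref{Magnus} together with an index-zero count using Lemma \ref{Fredho}(ii) and the rank-two projections $P,Q$), solve the resulting fixed point problem by Banach's theorem, obtain (iii) from uniform convergence of the iterates, and get (iv) from the classical implicit function theorem at fixed $(\la,\om)$. The only structural difference is that you invert $M(\la,\om,v)$ rather than $M(\la,\om,0)=\d_w\F(\la,\om,0,0)$. That is harmless in principle, but your appeal to ``stability of the Fredholm index under small operator-norm perturbations'' has a uniformity problem: the admissible perturbation radius depends on the unperturbed operator, and the family $(\la,\om)\mapsto I-C(\la,\om)-\d_uF(\la,\om,0)$ is not norm-continuous, so a single $\delta_3$ is not automatically small enough for every $(\la,\om)$. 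It is cleaner (and closer to the paper, which never needs invertibility away from $u=0$) to write $M(\la,\om,v)=M(\la,\om,0)+(I-P)\left(\d_uF(\la,\om,0)-\d_uF(\la,\om,v)\right)$ restricted to $\ker Q$ and use a Neumann series with the uniform bound $\|M(\la,\om,0)^{-1}\|_{\LL(\ker P;\ker Q)}\le 1/c_2$. Likewise, in (iii) the continuity of each Picard iterate requires the strong continuity of $(\la,\om,v)\mapsto M(\la,\om,v)^{-1}g$, which does not follow from norm continuity (unavailable here) but from the identity the paper uses to prove \reff{strcont1}; this is a one-line addition you should make explicit.

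The genuine gap is assertion (ii). You correctly observe that the linear-in-$v$ part of $N(\la,\om,v,0)$, namely $(I-P)(I-C(\la,\om)-\d_uF(\la,\om,0))v$, vanishes at $(\la,\om)=(0,1)$ and depends smoothly on $(\la,\om)$ because $v$ ranges in the two-dimensional space of smooth functions $\im Q$; but this reasoning only yields $\|N(\la,\om,v,0)\|_\infty\le \mathrm{const}\left((|\la|+|\om-1|)\|v\|_\infty+\|v\|_\infty^2\right)$, which suffices for the self-mapping and contraction estimates (hence for (i), (iii), (iv)) but not for the uniform quadratic bound $\|\hat w(\la,\om,v)\|_\infty\le c_3\|v\|_\infty^2$ claimed in (ii). The paper obtains (ii) precisely by eliminating the linear term altogether: using $v\in\ker L_0$ it writes $\F(\la,\om,v,0)=(I-P)\left(\d_uF(\la,\om,0)v-F(\la,\om,v)\right)$, a pure second-order Taylor remainder, which gives \reff{vAbsch} and hence the quadratic estimate. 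Your write-up does not reproduce this cancellation — indeed your own remark that the linear term vanishes only at $(0,1)$ shows the leap to $O(\|v\|_\infty^2)$ is unjustified as stated — so (ii), which is later needed to get \reff{dwzero}, is not established. To close the gap you must either justify that the linear term is annihilated by $I-P$ for all $(\la,\om)$ in the parameter box (the paper's step), or otherwise account for the extra contribution of size $(|\la|+|\om-1|)\|v\|_\infty$ in the subsequent bifurcation analysis.
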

\begin{proof} Consider the map $\F:[-\delta_0,\delta_0]\times \R \times \im Q \times \ker Q \to \ker P$ defined by
\beq
\label{FFdef}
\F(\la,\om,v,w):=(I-P)((I-C(\la,\om))(v+w)-F(\la,\om,v+w)).
\ee
Obviously, the map $\F$ has  properties analogous to \reff{cont1}--\reff{cont3}, in particular, it is continuous, 
\beq
\label{Fsmooth}
\F(\la,\om,\cdot,\cdot) \mbox{ is $C^\infty$-smooth}
\ee
and
\beq
\label{strcont}
(\la,\om) \in  [-\delta_3,\delta_3] \times \R \mapsto \d_w\F(\la,\om,v,w)w_1 \in \CC_n \mbox{ is continuous } 
\ee
for all $v \in \im Q$ and  $w,w_1 \in  \ker Q$.
Because of Lemma \ref{Fredho} (ii) and Lemma \ref{Magnus}, for all  $\la \in [-\delta_2,\delta_2]$ and $\mu \in [1-\delta_2,1+\delta_2]$
the operator
$$
\partial_w\F(\la,\om,0,0)=(I-P)(I-C(\la,\om)-\d_uF(\la,\om,0))
$$
is an injective Fredholm operator from $\ker Q$ into $\ker P$. Its index is zero because the index of $I-P$ (as an operator from $\CC_n$ into $\ker P$)
is $-2$, the index of $I-C(\la,\om)-\d_uF(\la,\om,0)$ (as an operator from $\CC_n$ into $\CC_n$) is zero, and the index of the embedding $w \in \ker Q 
\mapsto w \in \CC_n$ is $2$. Hence, 
$\partial_w\F(\la,\om,0,0)$ is an isomorphism 
from $\ker Q$ onto $\ker P$, and Lemma \ref{Magnus} yields that for all $\la \in [-\delta_2,\delta_2]$ and $\om \in [1-\delta_2,1+\delta_2]$ it holds
\beq
\label{c_1}
\|\partial_w\F(\la,\om,0,0)^{-1}\|_{\LL(\ker P;\ker Q)} \le c_2.
\ee
Furthermore,
\beq
\label{strcont1}
(\la,\om) \in  [-\delta_3,\delta_3] \times \R \mapsto \d_w\F(\la,\om,0,0)^{-1}w \in \CC_n \mbox{ is continuous for all } w \in  \ker Q.
\ee
Indeed, on the account of  \reff{strcont} and \reff{c_1}, we get that
\begin{eqnarray*}
&&\left(\d_w\F(\la,\om,0,0)^{-1}- \d_w\F(\la^0,\om^0,0,0)^{-1}\right)w\\
&&=\d_w\F(\la,\om,0,0)^{-1}
\left(\d_w\F(\la^0,\om^0,0,0)-\d_w\F(\la,\om,0,0)\right)\d_w\F(\la^0,\om^0,0,0)^{-1}w
\end{eqnarray*}
tends to zero for $(\la,\om) \to (\la^0,\om^0)$.

We have to solve \reff{infinite}, i.e.,  $\F(\la,\om,v,w)=0$.  For   $\la \in [-\delta_2,\delta_2]$ and $\om \in [1-\delta_2,1+\delta_2]$
this is equivalent 
to  the fixed point problem 
\beq
\label{FP}
\G(\la,\om,v,w):=w-\d_w\F(\la,\om,0,0)^{-1}\F(\la,\om,v,w)=w.
\ee
Moreover, it holds
\begin{eqnarray*}
\lefteqn{
\G(\la,\om,v,w^1)-\G(\la,\om,v,w^2)=\int_0^1\d_w\G(\la,\om,v,sw^1+(1-s)w^2)(w_1-w_2)ds}\nonumber\\
&&=\d_w\F(\la,\om,0,0)^{-1}\int_0^1\left(\d_w\F(\la,\om,0,0)-
\d_w\F(\la,\om,v,sw^1+(1-s)w^2)\right)(w^1-w^2)ds,
\end{eqnarray*}
where
\begin{eqnarray*}
\lefteqn{
\left(\d_w\F(\la,\om,v,sw^1+(1-s)w^2)-\d_w\F(\la,\om,0,0)\right)(w^1-w^2)}\\
&&=(I-P)\left(\d_uF(\la,\om,v+sw^1+(1-s)w^2)-\d_uF(\la,\om,0)\right)(w^1-w^2)\\
&&=(I-P)\int_0^1\d_u^2F(\la,\om,r(v+sw^1+(1-s)w^2))(v+sw^1+(1-s)w^2,w^1-w^2)dr.
\end{eqnarray*}
Hence, \reff{cont3} and \reff{c_1}
yield that there exists $\delta_3 \in (0,\delta_2)$ such that for all $\la \in
[-\delta_3,\delta_3]$, $\om \in [1-\delta_3,1+\delta_3]$, and $v \in \B_{\delta_3} \cap\im Q$
we have
$$
\|\G(\la,\om,v,w^1)-\G(\la,\om,v,w^2)\|_\infty\le \frac{1}{2}\|w^1-w^2\|_\infty \mbox{ for all }
w^1,w^2 \in \B_{\delta_3}\cap\ker Q.
$$
In other words: For those $\la$, $\om$, and $v$ the map $\G(\la,\om,v,\cdot)$ is strictly contractive on  $\B_{\delta_3}\cap\ker Q$.
In order to apply Banach's fixed point theorem we have to show that 
$\G(\la,\om,v,\cdot)$ maps  $\B_{\delta_3}\cap\ker Q$ into itself if $\delta_3$
is chosen sufficiently small.

Using \reff{c_1} again, for all  $\la \in
[-\delta_3,\delta_3]$, $\om \in [1-\delta_3,1+\delta_3]$, and $v \in \B_{\delta_3}\cap\im Q$
we get
\begin{eqnarray}
\|\G(\la,\om,v,w)\|_\infty &\le& \|\G(\la,\om,v,w)-\G(\la,\om,v,0)\|_\infty 
+\|\G(\la,\om,v,0)\|_\infty\nonumber\\
\label{absch1}
&\le&
\frac{1}{2}\|w\|_\infty+c_2\|\F(\la,\om,v,0)\|_\infty \mbox{ for all }
w \in \B_{\delta_3}\cap \ker Q.
\end{eqnarray}
Moreover, because of $v \in \ker L_0$ it holds
\begin{eqnarray*}
\lefteqn{
\F(\la,\om,v,0)=(I-P)((I-C(\la,\om))v-F(\la,\om,v))}\nonumber\\
&&=(I-P)(\d_uF(\la,\om,0)v-F(\la,\om,v))
=(I-P)\int_0^1s\partial^2_uF(\la,\om,sv)(v,v)ds.
\end{eqnarray*}
Hence, \reff{cont3} yields that there exists $c_3>0$ such that for all   $\la \in
[-\delta_3,\delta_3]$, $\om \in [1-\delta_3,1+\delta_3]$, and $v \in \B_{\delta_3}\cap\im Q$ we have 
\beq
\label{vAbsch}
\|\F(\la,\om,v,0)\|_\infty \le \frac{c_3}{2c_2}\|v\|^2_\infty,
\ee
and, because of \reff{absch1}, it follows that $\G(\la,\om,v,w) \in  \B_{\delta_3}\cap\im Q$ for all $ w \in  \B_{\delta_3}\cap\im Q$
if $\delta_3\le 1/c_3$.

Now, Banach's fixed point theorem gives a unique
in  $\B_{\delta_3}\cap\ker Q$  solution $w=\hat{w}(\la,\om,v)$ to \reff{FP} for all 
 $\la \in
[-\delta_3,\delta_3]$, $\om \in [1-\delta_3,1+\delta_3]$, and $v \in \B_{\delta_3}\cap \im Q$.
Hence, assertion (i) of the lemma is proved.

Moreover, \reff{absch1} and \reff{vAbsch} yield assertion (ii).

Assertion (iii) follows from the uniform contraction principle (cf. \cite[Theorem 1.244]{Chicone}):
First note that the contraction constant of $\G(\la,\om,v,\cdot)$ does not depend on $(\la,\om,v)$.
Moreover, for all $\lambda,\la^0 \in [\delta_2,\delta_2]$, $\om,\om^0 \in [1-\delta_2,1+\delta_2]$,
$v,v^0 \in \B_{\delta} \cap \im Q$, and $w,w^0 \in \B_{\delta} \cap \ker Q$ the difference
\begin{eqnarray*}
&&\G(\la,\om,v,w)-\G(\la^0,\om^0,v^0,w^0)\\
&&=w-w^0-\d_w\F(\la,\om,0,0)^{-1}\left(\F(\la,\om,v,w)-\F(\la^0,\om^0,v^0,w^0)\right)\\
&&+\left(\d_w\F(\la^0,\om^0,0,0)^{-1}-\d_w\F(\la,\om,0,0)^{-1}\right)\F(\la^0,\om^0,v^0,w^0),
\end{eqnarray*}
tends to zero in $\CC_n$ for $(\la,\om,v,w) \to (\la^0,\om^0,v^0,w^0)$
because of the continuity of $\F$,  \reff{c_1}, and   \reff{strcont1}. Hence, $\G$ is continuous.

Finally, assertion (iv) 
follows from the classical implicit function theorem.
\end{proof}

\subsection{Smoothness with respect to $x$ and $t$ }
\label{Additional Regularity}

The aim of this subsection is to prove the following result:
\begin{lemma}
\label{regular}
For all  $\la \in
[-\delta_3,\delta_3]$, $\om \in [1-\delta_3,1+\delta_3]$, and $v \in  \B_{\delta_3}\cap\im Q$
the map 
$$
(x,t) \in [0,1] \times \R \mapsto
[\hat{w}(\la,\om,v)](x,t) \in \R^n
$$
is $C^\infty$-smooth.
\end{lemma}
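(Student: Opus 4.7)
The plan is to bootstrap smoothness in $(x,t)$ from the fixed-point identity $\hat w = \G(\la,\om,v,\hat w)$ via the linear fiber contraction principle proved in the appendix. Setting $u := v + \hat w$, the equation $\F(\la,\om,v,\hat w) = 0$ is equivalent to $u = C(\la,\om) u + F(\la,\om,u) + g$ with $g := P[(I-C(\la,\om))u - F(\la,\om,u)] \in \im P = \mbox{span}\{{\bf \tilde{v}_1}, {\bf \tilde{v}_2}\}$. Since $v \in \ker L_0 = \mbox{span}\{{\bf v_1}, {\bf v_2}\}$ and all four basis functions are $C^\infty$-smooth in $(x,t)$ (the first two by elementary ODE regularity applied to \reff{evp}, the latter two because $(I-C(0,1))^{-1}$ preserves $\CC_n^k$ for every $k$, as follows by iterating the identities \reff{Abl} together with Lemma~\ref{Fredho}(i)), the regularity of $\hat w$ reduces to that of $u$.

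For the $t$-derivative, formally differentiating the fixed-point equation via the identities $\d_t C(\la,\om) u = C(\la,\om)\d_t u$ and $\d_t F(\la,\om,u) = \d_u F(\la,\om,u)\d_t u$ from \reff{Abl} produces the linear fixed-point equation $y = \tilde\G(\la,\om,v,\hat w, y)$ for the candidate $y = \d_t\hat w$, where $\tilde\G(\la,\om,v,w,y) := y - \d_w\F(\la,\om,0,0)^{-1}\d_w\F(\la,\om,v,w)(\d_t v + y)$. At $(\la,\om,v,w) = (0,1,0,0)$ the linear part of $\tilde\G$ in $y$ vanishes, so by the same estimates used in the proof of Lemma~\ref{infIFT}, $\tilde\G(\la,\om,v,\hat w,\cdot)$ is a strict affine contraction on $\ker Q$ for the parameter ranges in Lemma~\ref{infIFT}, and has a unique fixed point $y^* \in \ker Q$. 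To identify $y^* = \d_t\hat w$, I would run the coupled iteration $(w_{k+1},y_{k+1}) := (\G(\la,\om,v,w_k),\tilde\G(\la,\om,v,w_k,y_k))$ starting from $(w_0,y_0)=(0,0)$. A direct computation gives $y_k = \d_t w_k$ by induction, provided each $w_k \in \CC_n^1$; this holds because $\G(\la,\om,v,\cdot)$ preserves $\CC_n^1$, a consequence of \reff{Abl} and the $\CC_n^1$-boundedness of $(I-C(\la,\om))^{-1}$ in Lemma~\ref{Fredho}(i). The linear fiber contraction principle from the appendix then yields $(w_k,y_k) \to (\hat w,y^*)$ uniformly on $[0,1]\times\R$; uniform convergence of both $w_k \to \hat w$ and $\d_t w_k \to y^*$ forces $\d_t \hat w = y^* \in \CC_n$ by the fundamental theorem of calculus applied in $t$ at each fixed $x$.

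Once $\d_t u \in \CC_n$ is known, differentiating $u = C(\la,\om)u + F(\la,\om,u) + g$ in $x$ via \reff{Abl} gives the closed-form expression $\d_x u = C^x_0(\la,\om)u + C^x_1(\la,\om)\d_t u + F^x_0(\la,\om,u) + F^x_1(\la,\om,u)\d_t u + \d_x g$, so that $\d_x u \in \CC_n$ as well. Higher regularity follows by induction: $\d_t^2\hat w$ is produced by a second application of the fiber contraction argument, this time to the already linear equation for $y^*$, and mixed or higher $x$-derivatives come from iterated use of the explicit $\d_x u$ formula above. In this way $\hat w \in \CC_n^k$ for every $k \ge 1$, i.e.\ $\hat w(\la,\om,v)$ is $C^\infty$-smooth in $(x,t)$.

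The main obstacle is the identification $y^* = \d_t\hat w$ in the first step. The classical implicit function theorem is unavailable because $(\la,\om) \mapsto C(\la,\om)$ is not continuous in operator norm (the shift-operator discontinuity already flagged before Lemma~\ref{Magnus}), and the operator $I - C(\la,\om) - \d_u F(\la,\om,u)$ acting on all of $\CC_n$ is Fredholm of index zero with a two-dimensional kernel, hence not invertible there. The fiber contraction approach circumvents both difficulties by tracking the pair $(w_k,\d_t w_k)$ along an explicit contracting iteration, but is delicate precisely because one must verify that each iterate $w_k$ remains in $\CC_n^1$ so that $\d_t w_k$ exists at every step; this verification rests on the $\CC_n^1$-boundedness statement of Lemma~\ref{Fredho}(i) combined with \reff{Abl}.
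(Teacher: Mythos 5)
Your route is genuinely different from the paper's: you re-derive the $t$-smoothness of $\hat w$ by a fiber-contraction bootstrap along the iteration $w_{k+1}=\G(\la,\om,v,w_k)$, whereas the paper gets it almost for free from equivariance — the shift operators $S_\vphi$ commute with $C(\la,\om)$, $F(\la,\om,\cdot)$, $P$ and $Q$, so the uniqueness part of Lemma \ref{infIFT} gives $S_\vphi\hat w(\la,\om,v)=\hat w(\la,\om,S_\vphi v)$, and then the $C^\infty$-smoothness of $v\mapsto\hat w(\la,\om,v)$ (Lemma \ref{infIFT}(iv)) together with the smoothness of $\vphi\mapsto S_\vphi v$ for the trigonometric elements $v\in\im Q$ yields $\d_t^l\hat w\in\CC_n$ for all $l$; the $x$-regularity is then read off from the identity \reff{44}, exactly as in your final step. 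The bootstrap you propose is the strategy the paper reserves for the genuinely hard part, namely the $\la$- and $\om$-derivatives in Lemma \ref{laregular}, where no equivariance is available.

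There is, however, one genuine gap, precisely at the step you flag as delicate: the claim that each iterate $w_k$ lies in $\CC_n^1$ ``because $\G(\la,\om,v,\cdot)$ preserves $\CC_n^1$, a consequence of \reff{Abl} and the $\CC_n^1$-boundedness of $(I-C(\la,\om))^{-1}$''. The operator you must invert is $\d_w\F(\la,\om,0,0)=(I-P)\bigl(I-C(\la,\om)-\d_uF(\la,\om,0)\bigr)$ restricted to $\ker Q$, not $I-C(\la,\om)$; Lemma \ref{Fredho}(i) says nothing about it, and the extra term $\d_uF(\la,\om,0)$ does not map $\CC_n$ into $\CC_n^1$ (by \reff{Abl} its $t$-derivative requires $\d_t$ of the argument), so the $\CC_n^1$-regularity of $\d_w\F(\la,\om,0,0)^{-1}$ cannot be reduced to that of $(I-C(\la,\om))^{-1}$ without further input. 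The same missing fact is used again when you commute $\d_t$ with $\d_w\F(\la,\om,0,0)^{-1}$ to check $y_{k+1}=\d_tw_{k+1}$. The gap can be closed in two ways: (a) invoke the smoothing estimates from the proof of Lemma \ref{Magnus} (that $\d_uF(\la,\om,0)^2$ and $\d_uF(\la,\om,0)C(\la,\om)$ map $\CC_n$ boundedly into $\CC_n^1$) and write $(I-C)^{-1}=I+C(I-C)^{-1}$ to see that preimages of $\CC_n^1\cap\ker P$ lie in $\CC_n^1$; or (b), much cheaper, use equivariance: $\d_w\F(\la,\om,0,0)^{-1}$ commutes with $S_\vphi$ and is bounded in $\|\cdot\|_\infty$ by \reff{c_1}, so for $f\in\CC_n^1\cap\ker P$ the difference quotients of $S_\vphi\bigl(\d_w\F(\la,\om,0,0)^{-1}f\bigr)=\d_w\F(\la,\om,0,0)^{-1}S_\vphi f$ converge in $\CC_n$, giving both the regularity and the commutation with $\d_t$. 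Note that option (b) is exactly the idea on which the paper's proof of Lemma \ref{regular} (and step one of Lemma \ref{laregular}, cf.\ \reff{invar}) rests; once you insert it, your argument is correct but essentially a longer detour to the paper's conclusion.
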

\begin{proof}
For $\vphi \in \R$ and $u \in \CC_n$ we define $S_\vphi u \in \CC_n$ by
$
[S_\vphi u](x,t):=u(x,t+\vphi). 
$
Take  $\la \in
[-\delta_3,\delta_3]$, $\om \in [1-\delta_3,1+\delta_3]$, and $v \in  \B_{\delta_3}\cap \im Q$.
It is easy to see that for all $u \in \CC_n$ it holds
$$
S_\vphi C(\la,\om)u= C(\la,\om)S_\vphi u, \; 
S_\vphi F(\la,\om,u)=F(\la,\om,S_\vphi u).
$$
Moreover, the definitions \reff{def}, \reff{vtilde}, and \reff{Pdef} imply that for all $u \in \CC_n$ it holds
\begin{eqnarray}
PS_\vphi u&=&\sum_{k=1}^2\langle S_\vphi u,\tilde{A}{\bf w_k}\rangle {\bf \tilde{v}_k}=\sum_{k=1}^2\langle u,\tilde{A} S_{-\vphi}{\bf w_k}\rangle 
{\bf \tilde{v}_k}\nonumber\\
&=&\langle u,\tilde{A}(\cos \vphi {\bf w_1}-\sin \vphi {\bf w_2})\rangle {\bf \tilde{v}_1}+\langle u,\tilde{A}(\cos \vphi {\bf w_2}
+\sin \vphi {\bf w_1})\rangle {\bf \tilde{v}_2}\nonumber\\
&=&\langle u,\tilde{A}{\bf w_1}\rangle (\cos \vphi {\bf \tilde{v}_1}+\sin \vphi {\bf \tilde{v}_2})+
\langle u,\tilde{A}{\bf w_2}\rangle (\cos \vphi {\bf \tilde{v}_2}-\sin \vphi {\bf \tilde{v}_1})\nonumber\\
&=&S_\vphi Pu.
\label{Pinvert}
\end{eqnarray}
Similarly one shows $Q S_\vphi u=S_\vphi Qu$.
Hence, applying $S_\vphi$ to the identity
\beq
\label{41}
(I-P)((I-C(\la,\om))(v+\hat{w}(\la,\om,v))-F(\la,\om,v+\hat{w}(\la,\om,v)))=0,
\ee 
we get $(I-P)((I-C(\la,\om))(S_\vphi v+S_\vphi \hat{w}(\la,\om,v))-F(\la,\om,S_\vphi v+S_\vphi \hat{w}(\la,\om,v)))=0$.
Therefore, the uniqueness assertion of Lemma \ref{infIFT} yields
\beq
\label{42}
S_\vphi \hat{w}(\la,\om,v)=\hat{w}(\la,\om,S_\vphi v).
\ee
But all functions $v \in \ker L_0= \im Q$ are $C^\infty$-smooth, hence the maps $\vphi \in \R \mapsto S_\vphi v \in \CC_n$ are 
$C^\infty$-smooth for all  $v \in \im Q$. By Lemma \ref{infIFT} (iv), 
$\vphi \in \R \mapsto  S_\vphi \hat{w}(\la,\om,v) \in \CC_n$ is $C^\infty$-smooth, i.e., 
\beq
\label{glatt}
t \in \R \mapsto  \hat{w}(\la,\om,v)(x,t) \in \R^n \mbox{ is $C^\infty$-smooth}.
\ee

On the other hand, \reff{41} can be rewritten as follows:
\begin{eqnarray}
\lefteqn{
\!\!\!\!\!\!(I-C(\la,\om))\hat{w}(\la,\om,v)-F(\la,\om,v+\hat{w}(\la,\om,v))}\nonumber\\
&&\!\!\!\!\!\!\!\!\!\!\!\!
=(C(\la,\om)-I)v +P\left((I-C(\la,\om))(v+\hat{w}(\la,\om,v))-F(\la,\om,v+\hat{w}(\la,\om,v))\right).
\label{44}
\end{eqnarray}
The right-hand side of \reff{44} is $C^\infty$-smooth with respect to $(x,t)$. 
Moreover, from \reff{Cdef}, \reff{Fdef}, and \reff{glatt} 
it follows that $C(\la,\om)\hat{w}(\la,\om,v)$
and $F(\la,\om,v+\hat{w}(\la,\om,v))$ are  $C^\infty$-smooth with respect to $(x,t)$. 
Hence, \reff{44} yields that $\hat{w}(\la,\om,v)$  is  $C^\infty$-smooth with respect to~$(x,t)$. 
\end{proof}

\subsection{Differentiability with respect to $\la$ and $\om$ }
\label{smooth dep}

In this subsection we show that the map $\hat{w}$ is $C^2$-smooth.
For that we  use the well-known fact (see, e.g. \cite[Section 1.11.3]{Chicone})
that the fiber contraction principle can be used to show 
that functions, which are fixed points of certain operators in $C$-spaces, are smooth.

Remark that in the particular case, when all coefficient functions 
$a_j(x,\la)$ are $\la$-inde\-pen\-dent (and, hence, the problem   
(\ref{eq:1.1})--(\ref{eq:1.3}) depends
on the bifurcation parameter $\la$ via the terms $b_{jk}(x,\la,u)$ only),
the maps $\la \mapsto \hat{w}(\la,\om,v)$ are $C^\infty$-smooth because in this case the 
characteristics $\tau_j(\xi,x,t,\la,\om)$
do not depend on $\la$ and, hence, the maps $\la \mapsto C(\la,\om)$ and $\la \mapsto F(\la,\om,u)$ 
are  $C^\infty$-smooth (cf. \reff{Cdef} and \reff{Fdef}).
But also in this case the question if the maps $\om \mapsto \hat{w}(\la,\om,v)$ are differentiable, 
remains to be difficult.

\begin{lemma}
\label{laregular}
The map
$$
(\la,\om,v)  \in
[-\delta_3,\delta_3] \times [1-\delta_3,1+\delta_3] \times (\B_{\delta_3}\cap\im Q)
\mapsto
\hat{w}(\la,\om,v) \in \CC_n
$$ 
is $C^2$-smooth. 
\end{lemma}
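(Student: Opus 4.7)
The classical implicit function theorem is not applicable here because differentiating $\F(\la,\om,v,w)$ in $(\la,\om)$ produces a factor $\d_tw$ via the characteristic $\tau_j(\xi,x,t,\la,\om)$ in \reff{charom}, so the map $(\la,\om) \mapsto \d_w\F(\la,\om,v,w)$ is not continuous in $\LL(\CC_n)$ when $w$ is merely continuous. My plan is to invoke the fiber contraction principle announced in the appendix, applied to a joint fixed-point problem for $\hat w$ together with its candidate partial derivatives. The guiding observation is that $\F$, while not parameter-smooth on $\CC_n$, becomes so when restricted to functions that are $C^k$ in $t$; and by Lemma \ref{regular} the fixed point $\hat w$ is fortunately $C^\infty$ in $(x,t)$.

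The first step is to strengthen Lemma \ref{infIFT} in a stronger norm. Using the commutation identities \reff{Abl} and the second part of Lemma \ref{Fredho}(i), I would verify that $\G$ in \reff{FP} remains a strict contraction when viewed as a map into $\CC_n^1$, so that $(\la,\om,v) \mapsto \hat w(\la,\om,v)$ is continuous with values in $\CC_n^1$. Iterating in $t$, exploiting the identity $\d_tC(\la,\om)u=C(\la,\om)\d_tu$ from \reff{Abl}, promotes continuity to spaces of functions with arbitrarily many continuous $t$-derivatives, uniformly in $(\la,\om,v)$ on a small neighborhood of $(0,1,0)$.

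The second step is the construction of the candidate first derivative. Formally differentiating $\F(\la,\om,v,\hat w)=0$ in $\om$ yields the linear equation
$$
\d_w\F(\la,\om,v,\hat w)\,\hat y_\om = -\d_\om\F(\la,\om,v,\hat w),
$$
whose right-hand side is well defined thanks to the $t$-smoothness from Step~1, and whose left-hand side is invertible near the origin by Lemma \ref{Magnus}. The linear fiber contraction principle of the appendix, applied to the two-level system
$$
(w,y) \longmapsto \left(\G(\la,\om,v,w),\; \d_w\G(\la,\om,v,w)y + \d_\om\G(\la,\om,v,w)\right)
$$
on the restricted domain of $t$-smooth functions, then certifies that the candidate $\hat y_\om$ genuinely equals $\d_\om\hat w$ and depends continuously on $(\la,\om,v)$. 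The analogous construction gives $\d_\la\hat w$, while $\d_v\hat w$ is already available from Lemma \ref{infIFT}(iv). Iterating the same scheme once more on the system satisfied by $(\hat w,\hat y_\la,\hat y_\om,\hat y_v)$---which now requires two $t$-derivatives of $\hat w$, again supplied by Step~1---delivers the desired $C^2$-regularity.

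The main obstacle will be Step~1, namely propagating continuity of $\hat w$ into $\CC_n^k$-valued continuity uniformly in $(\la,\om,v)$. The auxiliary operators $C^x_1(\la,\om)$ and $F^x_1(\la,\om,u)$ in \reff{Abl} couple $x$- and $t$-derivatives and carry norms that grow with $\om$, so the contraction constant of $\G$ in a $\CC^k_n$-norm must be kept strictly below one by shrinking the parameter domain and applying Lemma \ref{Fredho}(i) in $\CC_n^1$; it is precisely at this bootstrapping step that the dissipativity assumption $R_0R_1<1$ is indispensable. Once this uniform higher regularity is in place, the remainder of the argument is a fairly mechanical application of the fiber contraction principle twice in succession.
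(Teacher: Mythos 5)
Your overall route is the paper's route: first establish extra $t$-regularity, then obtain candidate parameter derivatives by formally differentiating the fixed-point equation \reff{FP}, identify them with the true derivatives by a fiber-contraction argument, and repeat once more for the second-order derivatives (the paper's appendix Lemma \ref{A2} is exactly the linear fiber-contraction tool, applied to the iterates $w_k$ of \reff{FP}). Two of your supporting claims are, however, off target. The auxiliary regularity is neither needed nor naturally available in the full $\CC_n^1$-norm: the paper works in the spaces $\tilde{\CC}_n^l$ of functions with continuous $t$-derivatives only, where everything is cheap because $\d_t$ commutes with $C(\la,\om)$, with $\d_uF(\la,\om,0)$ and with $P$ (cf.\ \reff{Abl}, \reff{Pinvert}), and the $t$-smoothness of the iterates comes from the shift equivariance $S_\vphi$ as in Lemma \ref{regular}, not from a contraction estimate in a stronger norm. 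Consequently the operators $C^x_1$, $F^x_1$ and the growth of their norms in $\om$ play no role in this lemma, and the condition \reff{Fred} is not ``indispensable at the bootstrapping step''; it enters only through Lemma \ref{Fredho}, exactly as it already did in Lemmas \ref{Magnus} and \ref{infIFT}.

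The genuine thin spot is the sentence asserting that the fiber contraction principle ``certifies that the candidate $\hat y_\om$ genuinely equals $\d_\om\hat w$.'' Because $\F$ is not differentiable with respect to $(\la,\om)$ as a map on $\CC_n$ — the derivative \reff{dla} exists only on the restricted domain $\tilde{\CC}_n^1\cap\ker Q$ and loses one $t$-derivative — the classical smooth-dependence/fiber-contraction theorem cannot be cited as a black box: its hypotheses on parameter-differentiability of the contraction fail. The paper closes exactly this hole by proving, inductively in $k$, that each iterate $w_k$ is differentiable in $\la$ and $\om$, via explicit difference-quotient estimates that rest on the lower bound of Lemma \ref{Magnus} and on bounds like \reff{lawest}, and only then does Lemma \ref{A2} give uniform convergence of $\d_\la w_k$, $\d_\om w_k$ and of all second-order derivatives, so that the classical theorem on uniform convergence of derivatives yields $C^2$-smoothness of $\hat w$. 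Your sketch needs an argument of this type (or a direct $o(\mu)$ estimate for $\hat w(\la+\mu)-\hat w(\la)-\mu\hat y_\la$); as written, the identification of the candidate with the derivative is unsupported. A minor further point: Lemma \ref{infIFT}(iv) gives smoothness in $v$ only for fixed $(\la,\om)$, so for joint $C^2$-smoothness the $v$-derivatives and the mixed derivatives must also be pushed through the same convergence scheme, as the paper does in its steps three, four, seven and eight.
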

\begin{proof}
For $k=0,1,2,\ldots$ we define 
maps $w_k:[-\delta_3,\delta_3] \times [1-\delta_3,1+\delta_3] \times (\B_{\delta_3}\cap\im Q)
\to \ker Q$ 
by means of
\beq
\label{sequence}
w_0:=0,\; w_{k+1}(\la,\om,v):=
w_k(\la,\om,v)-\d_w\F(\la,\om,0,0)^{-1}\F(\la,\om,v,w_k(\la,\om,v)).
\ee
As it follows from  the proof of Lemma \ref{infIFT},
\beq
\label{wkconv}
w_k(\la,\om,v) \to \hat{w}(\la,\om,v) \mbox{ in } \CC_n \mbox{ for } k \to \infty \mbox{ uniformly with respect to } \la,\om,v.
\ee
Now we are going to show that all functions $w_1,w_2,\ldots$ are $C^2$-smooth 
and that the sequences of all their partial derivatives up to the second order converge in $\CC_n$
uniformly with respect to $\la, \om$, and $v$.
Then the classical theorem of calculus  \cite[Theorem 8.6.3]{Dieudonne}
yields that  $\hat{w}$ is $C^2$-smooth (and, hence, the lemma is proved). 
Here and in what follows we identify the two-dimensional vector space $\im Q$ with $\R^2$ (by fixing a certain basis in  $\im Q$), and the 
partial derivatives are taken with respect to the corresponding coordinates.

 For $l=1,2,\ldots$ we denote
$$
\tilde{\CC}_n^l:=\{u \in \CC_n:\,\d_t^ju \in \CC_n \mbox{ for } j=1,2,\ldots,l\}.
$$
This is a Banach space with the norm
$$
\|u\|_l:=\sum_{j=0}^l\|\d_t^ju\|_\infty.
$$

The remainder of the proof will be divided into a number of claims.

\begin{cclaim}\label{cl:11}
 For all 
$\la \in[-\delta_3,\delta_3]$,  $\om \in [1-\delta_3,1+\delta_3]$, $v \in  \B_{\delta_3}\cap\im Q$, and $k,l=1,2,\ldots$
the function $w_k(\la,\om,v)$ belongs to  $\tilde{\CC}_n^l$. Furthermore, 
 $w_k(\la,\om,v)$ depends continuously
(with respect to the norm in $\tilde{\CC}_n^l$) on $\la$, $\om$, and $v$. 
\end{cclaim}

\begin{subproof}
The proof will be done by induction on $k$.
For $k=0$ this claim is obvious.

To do the induction step we  proceed as in the proof of Lemma \ref{regular}.
For all $\vphi \in \R$ we have
\beq
\label{invar}
S_\vphi w_{k+1}(\la,\om,v)=S_\vphi w_{k}(\la,\om,v)-\d_w\F(\la,\om,0,0)^{-1}\F(\la,\om,v,S_\vphi w_k(\la,\om,v)).
\ee
By induction assumption the map $\vphi \in \R \mapsto S_\vphi w_{k}(\la,\om,v) \in \CC_n$ is $C^\infty$-smooth, and all its derivatives depend continuously 
(with respect to $\|\cdot\|_\infty$)  on $\la$, $\om$, and $v$.
Hence, by \reff{Fsmooth} and  \reff{strcont1}, the right-hand side of \reff{invar} is $C^\infty$-smooth with respect to $\vphi$, and all its derivatives  in $\vphi$
depend continuously 
(with respect to $\|\cdot\|_\infty$)  on $\la$, $\om$, and $v$.
Therefore, also the left-hand side  of \reff{invar} has this property, i.e., all partial derivatives $\d_t^jw_{k+1}(\la,\om,v)$ exist in $\CC_n$ 
and depend continuously 
(with respect to $\|\cdot\|_\infty$)  on $\la$, $\om$, and $v$.
\end{subproof}

\begin{cclaim}\label{cl:12}
The sequence $\d_tw_k(\la,\om,v)$ converges in $\CC_n$ for $k\to\infty$ uniformly with respect to 
$\la, \om$, and $v$.
\end{cclaim}

\begin{subproof}
From \reff{Abl} and \reff{Pinvert} (which implies that $\d_tPu=P\d_tu$ for all 
$u \in \tilde{\CC}_n^1$)
it follows
that for all $w \in  \tilde{\CC}_n^1 \cap \ker Q$ we have
\beq
\label{tF}
\d_t\F(\la,\om,v,w)=\d_w\F(\la,\om,v,w)\d_tw+\d_v\F(\la,\om,v,w)\d_tv.
\ee
Hence,
\begin{eqnarray*}
\lefteqn{
\d_tw_{k+1}(\la,\om,v)}\\
&&=\left(I-\d_w\F(\la,\om,0,0)^{-1}\d_w\F(\la,\om,v,w_k(\la,\om,v))\right)\d_tw_k(\la,\om,v)\\
&&-\d_w\F(\la,\om,0,0)^{-1}\d_v\F(\la,\om,v,w_k(\la,\om,v))\d_tv.
\end{eqnarray*}
But \reff{wkconv} implies
$$
\d_w\F(\la,\om,0,0)^{-1}\d_v\F(\la,\om,v,w_k(\la,\om,v))\d_tv \to \d_w\F(\la,\om,0,0)^{-1}\d_v\F(\la,\om,v,\hat{w}(\la,\om,v))\d_tv 
$$
in  $\CC_n$ for  $k \to \infty$ uniformly with respect to  $\la,\om$, and $v$. Moreover, from the proof of
 Lemma \ref{infIFT} it follows that
$$
\|I-\d_w\F(\la,\om,0,0)^{-1}\d_w\F(\la,\om,v,w_k(\la,\om,v))\|_{\LL(\ker Q)} \le \frac{2}{3}
$$
for large $k$. Hence, Lemma~\ref{A2}
yields the desired claim. 
Here we apply Lemma~\ref{A2} with $U$ chosen as the Banach space of all continuous maps 
$u:[-\delta_3,\delta_3] \times [1-\delta_3,1+\delta_3] 
\times (\B_{\delta_3}\cap\im Q) \to \ker Q$ with the usual maximum norm.
\end{subproof}

\begin{cclaim}\label{cl:13}
For $j=1,2$ the partial derivative  $\d_{v_j}w_k(\la,\om,v)$  exists in $\CC_n$
and depends continuously
(with respect to the norm  $\|\cdot\|_\infty$ in $\CC_n$) on $\la$, $\om$, and $v$.
\end{cclaim}

\begin{subproof}
The proof is by induction on $k$.  

Due to induction assumption,   $\d_{v_j}w_{k}(\la,\om,v)$ exists in $\CC_n$. Hence, 
on the account of  \reff{Fsmooth},
the partial derivative with respect to $v_j$ of 
$$
w_{k+1}(\la,\om,\cdot)=w_{k}(\la,\om,\cdot)- \d_w\F(\la,\om,0,0)^{-1}\F(\la,\om,\cdot,w_k(\la,\om,\cdot))
$$
exists in $\CC_n$. 
\end{subproof}

Similarly one proves the following statement.

\begin{cclaim}\label{cl:130}
All partial derivatives $\d_t^l\d_{v_j}^mw_k(\la,\om,v)$ exist in $\CC_n$
and depend continuously
(with respect to the norm in $\CC_n$) on $\la$, $\om$, and $v$.
\end{cclaim}

\begin{cclaim}\label{cl:14}
Given $j=1,2$, the sequence $\d_{v_j}w_k(\la,\om,v)$ converges  in $\CC_n$
for $k \to \infty$  uniformly with respect to  $\la,\om$, and $v$.
\end{cclaim}

\begin{subproof}
 This follows, as above, from the equality
\begin{eqnarray*}
\lefteqn{
\d_{v_j}w_{k+1}(\la,\om,v)}\\
&&=\left(I-\d_w\F(\la,\om,0,0)^{-1}\d_w\F(\la,\om,v,w_k(\la,\om,v))\right)\d_{v_j}w_k(\la,\om,v)\\
&&-\d_w\F(\la,\om,0,0)^{-1}\d_{v_j}\F(\la,\om,v,w_k(\la,\om,v))
\end{eqnarray*}
and Lemma \ref{A2}.
\end{subproof}

\begin{cclaim}\label{cl:15}
For all $k=1,2,\dots$ the partial 
derivative $\partial_\la w_k(\la,\om,v)$ exists, belongs to $\CC_n$,
and depends continuously
(with respect to $\|\cdot\|_\infty$) on $\la$, $\om$, and $v$.
\end{cclaim}

\begin{subproof}
For the proof we use Claim \ref{cl:11} and induction on $k$: 
For $k=0$ the claim is obvious.

In order to do the induction step, first  let us do some preliminary calculations.
It follows directly from the definitions \reff{Cdef} and \reff{Fdef} that for all $\om \in \R$ and $u \in \tilde{\CC}_n^1$ 
the maps $\la \in [-\delta_0,\delta_0] \mapsto C(\la,\om)u \in \CC_n$ and $\la \in [-\delta_0,\delta_0] \mapsto F(\la,\om,u) \in \CC_n$ 
are differentiable and
\beq
\label{lader}
\begin{array}{rcl}
\d_\la C(\la,\om)u&=&C^\la_0(\la,\om)u+C^\la_1(\la,\om)\d_tu,\\
\d_\la F(\la,\om,u)&=&F^\la_0(\la,\om,u)+F^\la_1(\la,\om,u)\d_tu,
\end{array}
\ee
where the operators $C^\la_0(\la,\om),C^\la_1(\la,\om) \in \LL(\CC_n)$ are defined as follows: For $j=1,\ldots,m$ we set
$$
\begin{array}{rcl}
\displaystyle
\left(C^\la_0(\la,\om)u\right)_j(x,t)&:=&
\displaystyle
\d_\la c_j(0,x,\la)\sum_{k=m+1}^nr_{jk}u_k(0,\tau_j(0,x,t,\la,\om)),\\
\displaystyle
\left(C^\la_1(\la,\om)v\right)_j(x,t)&:=&
\displaystyle
\d_\la\tau_j(0,x,t,\la,\om)c_j(0,x,\la)\sum_{k=m+1}^nr_{jk}v_k(0,\tau_j(0,x,t,\la,\om)),
\end{array}
$$
and for $j=m+1,\ldots,n$ we set
$$
\begin{array}{rcl}
\displaystyle
\left(C^\la_0(\la,\om)u\right)_j(x,t)&:=&
\displaystyle
\d_\la c_j(1,x,\la)\sum_{k=1}^mr_{jk}u_k(1,\tau_j(1,x,t,\la,\om)),\\
\displaystyle
\left(C^\la_1(\la,\om)v\right)_j(x,t)&:=&
\displaystyle
\d_\la\tau_j(1,x,t,\la,\om)c_j(1,x,\la)\sum_{k=1}^mr_{jk}v_k(1,\tau_j(1,x,t,\la,\om)).
\end{array}
$$
Further, the map $F^\la_0: [-\delta_0,\delta_0] \times \R \times  \CC_n \to  \CC_n$ is defined by  
\begin{eqnarray*}
(F^\la_0(\la,\om,u))_j(x,t)&:=&
\int_{x_j}^x
\d_\la\left(\frac{c_j(\xi,x,\la)}{a_j(\xi,\la)}\right)
f_j(\xi,\la,u(\xi,\tau_j(\xi,x,t,\la,\om)))d\xi\\
&&+\int_{x_j}^x \frac{c_j(\xi,x,\la)}{a_j(\xi,\la)}\d_\la f_j(\xi,\la,u(\xi,\tau_j(\xi,x,t,\la,\om)))d\xi
\end{eqnarray*}
and $F^x_1(\la,\om,u) \in  \LL(\CC_n)$ is defined by
\begin{eqnarray*}
&&(F^x_1(\la,\om,u)v)_j(x,t):=\\
&&
\displaystyle\int_{x_j}^x\frac{\d_\la\tau_j(\xi,x,t,\la,\om)c_j(\xi,x,\la)}{a_j(\xi,\la)}\sum_{k=1}^n
\d_{u_k}f_j(\xi,\la,u(\xi,\tau_j(\xi,x,t,\la,\om)))
v_k(\xi,\tau_j(\xi,x,t,\la,\om))d\xi.
\end{eqnarray*}
Hence, for all $\om \in \R$, $v \in \mbox{im}\,Q$, and $w \in \tilde{\CC}_n^1 \cap \ker Q$ 
the map $\la \in [-\delta_0,\delta_0] \mapsto \F(\la,\om,v,w) \in \CC_n$ 
is differentiable and its derivative is
\begin{eqnarray}
\d_\la \F(\la,\om,v,w)&=&-(I-P)\left(C_0^\la(\la,\om)(v+w)+F_0^\la(\la,\om,v+w)\right.\nonumber\\
&&\left.+(C_1^\la(\la,\om)+F_1^\la(\la,\om,v+w))(\d_tv+\d_tw)\right).
\label{dla}
\end{eqnarray}
Remark that  the map $\F:[-\delta_0,\delta_0] \times \R \times  \mbox{im}\,Q \times \ker Q \to \ker P$ defined in \reff{FFdef}
is not differentiable (with respect to the norm $\|\cdot\|_\infty$ in $\ker P$ and $\ker Q$), in general.
But its restriction to $[-\delta_0,\delta_0] \times \R \times  \mbox{im}\,Q \times (\tilde{\CC}_n^1 \cap \ker Q)$
is differentiable  (with respect to the norms $\|\cdot\|_\infty$ in $\ker P$ and $\|\cdot\|_1$ in $\ker Q$).
Therefore, the notation $\d_\la \F$ should be used with care: It is not the partial derivative of $\F$, but of its 
restriction to $[-\delta_0,\delta_0] \times \R \times  \mbox{im}\,Q \times (\tilde{\CC}_n^1 \cap \ker Q)$.
In particular, it is a map from $[-\delta_0,\delta_0] \times \R \times \mbox{im}\,Q \times  (\tilde{\CC}_n^1 \cap \ker Q)$ into  $\CC_n$.
The same care is needed if using the notations $\d_\la C(\la,\om)u$ and $\d_\la F(\la,\om,u)$.

Now let us  do the induction step.  By the induction assumption, for a fixed $k$ the partial 
derivative $\partial_\la w_k(\la,\om,v)$ exists in $\CC_n$
and depends continuously on $\la$, $\om$, and $v$.
Define
$$
R_k(\la,\om,v):=\d_w\F(\la,\om,0,0)w_{k}(\la,\om,v)-\F(\la,\om,v,w_k(\la,\om,v)).
$$
Because of  $w_k(\la,\om,v) \in \tilde{\CC}_n^1$ the partial derivative $\d_\la R_k(\la,\om,v)$  exists in $\CC_n$
and depends continuously on $\la$, $\om$, and $v$. In fact, it holds
\begin{eqnarray*}
\lefteqn{
\d_\la R_k(\la,\om,v)=\d_\la\d_w\F(\la,\om,0,0) w_k(\la,\om,v)+\d_w\F(\la,\om,0,0) \d_\la w_k(\la,\om,v)}\\
&&-\d_\la \F(\la,\om,v,w_k(\la,\om,v))-\d_w \F(\la,\om,v,w_k(\la,\om,v)) \d_\la w_k(\la,\om,v).
\end{eqnarray*}
Here $\d_\la\d_w\F(\la,\om,0,0) \in \LL(\tilde{\CC}_n^1 \cap \ker Q;\CC_n)$ is the derivative of the map $\la \in [-\delta,\delta_0]
\mapsto \d_w\F(\la,\om,0,0) \in \LL(\tilde{\CC}_n^1 \cap \ker Q;\CC_n)$ or, the same, the derivative of the map $w \in \tilde{\CC}_n^1 \cap \ker Q
\mapsto \d_\la \F(\la,\om,0,w) \in \CC_n$ at $w=0$, i.e. (cf. \reff{dla}),
$$
\d_\la \d_w\F(\la,\om,0,0)w=(P-I)\left((C_0^\la(\la,\om)+\d_uF_0^\la(\la,\om,0))w+(C_1^\la(\la,\om)+F_1^\la(\la,\om,0))\d_tw\right).
$$
It is easy to check (using the definitions of $C_0^\la,C_1^\la, F_0^\la$, and $F_1^\la$) that for all $w \in  \tilde{\CC}_n^1 \cap\ker Q$ it holds
\beq
\label{strcont2}
(\la,\om) \in  [-\delta_3,\delta_3] \times \R \mapsto \d_\la\d_w\F(\la,\om,0,0)w \in \CC_n \mbox{ is continuous, } 
\ee
and that for all  $\la \in [-\delta_0,\delta_0]$,
$\om \in  \R$, and $w \in \tilde{\CC}_n^1 \cap \ker Q$ it holds
\beq
\label{lawest}
\|\d_\la \d_w\F(\la,\om,0,0)w\|_\infty \le \mbox{const }\|w\|_1,
\ee
where the constant does not depend on  $w$, $\la$, and $\om$ (as long as $\om$ varies in a bounded interval).

We have 
\beq
\label{Feq2}
\d_w\F(\la,\om,0,0)w_{k+1}(\la,\om,v)= R_k(\la,\om,v).
\ee
The induction claim, we have to prove, is  that the partial 
derivative $\partial_\la w_{k+1}(\la,\om,v)$ exists in $\CC_n$
and depends continuously on $\la$, $\om$, and $v$.
The candidate for  $\d_\la w_{k+1}(\la,\om,v)$ is
$$
\d_w\F(\la,\om,0,0)^{-1}\left(\d_\la R_k(\la,\om,v)-\d_w\d_\la \F(\la,\om,0,0)w_{k+1}(\la,\om,v)\right).
$$
This candidate  depends continuously on $\la$, $\om$, and $v$ because of \reff{strcont1} and  \reff{strcont2}.
In order to show that this candidate is really  $\d_\la w_{k+1}(\la,\om,v)$ we denote,
for the sake of shortness, $w_k(\la):=w_k(\la,\om,v)$,  $R_k(\la):=R_k(\la,\om,v)$, 
$\d_w\F(\la):=\d_w\F(\la,\om,0,0)$, $\d_\la\d_w \F(\la):=\d_\la\d_w\F(\la,\om,0,0)$ and 
calculate
\begin{eqnarray*}
\lefteqn{
\d_w\F(\la+\mu)\left(w_{k+1}(\la+\mu)- w_{k+1}(\la)
-\mu \d_w\F(\la)^{-1}\left(\d_\la R_k(\la)
-\d_\la\d_w \F(\la)w_{k+1}(\la)\right)\right)}
\nonumber\\
&&= R_k(\la+\mu)- R_k(\la)-\mu \d_w\F(\la+\mu) \d_w\F(\la)^{-1}\d_\la R_k(\la) \nonumber \\
&&-\left(\d_w\F(\la+\mu)-\d_w\F(\la)-
\mu \d_w\F(\la+\mu) \d_w\F(\la)^{-1}
\d_\la\d_w  \F(\la)\right)w_{k+1}.
\label{diff}
\end{eqnarray*}
The right-hand side is $o(\mu)$ in $\CC_n$ for $\mu \to 0$ because of \reff{strcont}. Hence, the  same is true for the 
left-hand side. Now, Lemma \ref{Magnus} yields 
the claim.
\end{subproof}

\begin{cclaim}\label{cl:16}
The sequence $\d_{\la}w_k(\la,\om,v)$ converges  in $\CC_n$
for $k \to \infty$  uniformly with respect to  $\la,\om$, and $v$.
\end{cclaim}

\begin{subproof}
From  
\reff{Feq2} it follows
\begin{eqnarray*}
\lefteqn{
\d_w\F(\la,\om,0,0)\left(\d_\la w_{k+1}(\la,\om,v)-\d_\la w_{k}(\la,\om,v)\right))}\\
&&+\d_w\d_\la \F(\la,\om,0,0)\left(w_{k+1}(\la,\om,v)-w_{k}(\la,\om,v)\right)\\
&&=-\d_w\F(\la,\om,v,w_k(\la,\om,v))\d_\la w_{k}(\la,\om,v)-\d_\la \F(\la,\om,v,w_k(\la,\om,v)).
\end{eqnarray*}
Therefore, $\d_\la w_{k+1}(\la,\om,v)-\left(I-\d_w\F(\la,\om,0,0)^{-1}\d_w\F(\la,\om,v,w_k(\la,\om,v))\right)\d_\la w_{k}(\la,\om,v)$
converges in $\CC_n$ for $k \to \infty$ uniformly with respect to $\la,\om$, and $v$
(here we use \reff{dla} and the claim of step two). Hence, Lemma \ref{A2} yields the claim.
\end{subproof}

A similar result is true with respect to the $\om$-derivative.
\begin{cclaim}\label{cl:17}
The partial derivatives $\d_\om w_k(\la,\om,v)$ exist in $\CC_n$ for all
$k=1,2,\dots$ and the sequence $\d_\om w_k(\la,\om,v)$ converges in $\CC_n$ 
for $k \to \infty$
uniformly with respect to $\la,\om$, and $v$.
\end{cclaim}

Now we consider partial derivatives of the second order.
\begin{cclaim}\label{cl:18}
The  sequence $\d_t^2 w_k(\la,\om,v)$ converges in $\CC_n$ for $k \to \infty$
uniformly with respect to $\la,\om$, and $v$. 
\end{cclaim}

\begin{subproof}
 From \reff{tF} we get
\begin{eqnarray}
\lefteqn{\!\!\!\!\!\!\!\!\!
\d_t^2\F(\la,\om,v,w)=\d_w\F(\la,\om,v,w)\d_t^2w+\d_w^2\F(\la,\om,v,w)(\d_tw,\d_tw)}\nonumber\\
&&
\!\!\!\!\!\!\!\!\!\!\!\!\!\!\!
+2\d_v\d_w\F(\la,\om,v,w)(\d_tv,\d_tw)+\d_v^2\F(\la,\om,v,w)(\d_tv,\d_tv)
+\d_v\F(\la,\om,v,w)\d_t^2v
\label{*}
\end{eqnarray}
for all $w \in  \tilde{\CC}_n^2 \cap \ker Q$. Hence,  it follows from \reff{Feq2} that the sequence
$\d_t^2 w_{k+1}(\la,\om,v)-\left(I-\d_w\F(\la,\om,0,0)^{-1}\d_w\F(\la,\om,v,w_k(\la,\om,v))\right)\d_t^2 w_{k}(\la,\om,v)$
converges in $\CC_n$ for $k \to \infty$ uniformly with respect to $\la,\om$, and $v$, and Lemma \ref{A2} yields the claim.
\end{subproof}

Similarly we get the following statement.
\begin{cclaim}\label{cl:19}
The sequences $\d_t\d_{v_j} w_k(\la,\om,v)$ and
$\d_{v_i}\d_{v_j} w_k(\la,\om,v)$
converge in $\CC_n$ for $k \to \infty$
uniformly with respect to $\la,\om$, and $v$.
\end{cclaim}

\begin{cclaim}\label{cl:110}
The partial derivatives $\d_t\d_\la w_k(\la,\om,v)$  exist in $\CC_n$ for all $k=1,2,\dots$ 
and the sequence $\d_t\d_\la w_k(\la,\om,v)$  converges in 
$\CC_n$ for $k \to \infty$
uniformly with respect to $\la,\om$, and $v$. 
\end{cclaim}

\begin{subproof}
We have
\begin{eqnarray}
&&\d_w\F(\la,\om,0,0)\left(\d_tw_{k+1}(\la,\om,v)-\d_tw_{k}(\la,\om,v)\right)\nonumber\\
&&=-\d_v\F(\la,\om,v,w_{k}(\la,\om,v))\d_tv
-\d_w\F(\la,\om,v,w_{k}(\la,\om,v))\d_tw_{k}(\la,\om,v).
\label{tla}
\end{eqnarray}
Hence, we can proceed as in Claims \ref{cl:15} and \ref{cl:16}, 
replacing \reff{sequence} by \reff{tla}, to get the desired statement.
\end{subproof}
A similar argument works to prove the next claim.

\begin{cclaim}\label{cl:111}
The partial derivatives  $\d_t\d_\om w_k(\la,\om,v)$,
$\d_{v_j}\d_\la w_k(\la,\om,v)$, and $\d_{v_j}\d_\om w_k(\la,\om,v)$
exist in $\CC_n$ for all $k=1,2,\dots$ and their sequences converge in 
$\CC_n$ for $k \to \infty$
uniformly with respect to $\la,\om$, and $v$.
\end{cclaim}

\begin{cclaim}\label{cl:112}
For all $k=1,2,\dots$ the partial 
derivative $\partial_\la^2 w_k(\la,\om,v)$ exists, belongs to $\CC_n$
and depends continuously
(with respect to $\|\cdot\|_\infty$) on $\la$, $\om$, and $v$.
\end{cclaim}
\begin{subproof}
Here we use Claims \ref{cl:18} and \ref{cl:110} and  induction on $k$. 

Similarly to Claims \ref{cl:15}  one can show that the restriction of the map $\F$ to
$[-\delta_0,\delta_0] \times  \R  \times \mbox{im}\,Q \times (\tilde{\CC}_n^2 \cap \ker Q)$ 
is twice differentiable (with respect to the norm $\|\cdot\|_2$ in $\tilde{\CC}_n^2 \cap \ker Q$). We denote 
the second partial derivative with respect to $\la$ of this restriction
by $\d_\la^2 \F$, i.e., $\d_\la^2 \F$ is a continuous map from $[-\delta_0,\delta_0] \times \R \times \mbox{im}\,Q \times (\tilde{\CC}_n^2 \cap \ker Q)$ into $\CC_n$.
By induction assumption the second partial derivative with respect to $\la$ of $R_k$  
exists in $\CC_n$ (because of $w_{k}(\la,\om,v) \in \tilde{\CC}_n^2$).
It follows from \reff{Feq2} that
\beq
\label{Feq3}
\d_w\F(\la,\om,0,0)\d_\la w_{k+1}(\la,\om,v)=\d_\la R_k(\la,\om,v)-\d_w\d_\la \F(\la,\om,0,0)w_{k}(\la,\om,v).
\ee
Hence, if $\d_\la^2 w_{k+1}(\la,\om,v)$ exists in $\CC_n$, then  
it holds 
\begin{eqnarray}
\lefteqn{\!\!\!\!\!\!\!\!\!\!\!\!\!\!\!\!
\d_w\F(\la,\om,0,0) \d_\la^2 w_{k+1}(\la,\om,v)=\d_\la^2 R_k(\la,\om,v)-\d_w\d_\la^2\F(\la,\om,0,0)w_{k}(\la,\om,v)}\nonumber\\
&&\!\!\!\!\!\!\!\!\!\!\!\!\!\!\!\!\!\!\!\!\!\!\!\!\!\!\!
-\d_w\d_\la \F(\la,\om,0,0)\d_\la w_{k}(\la,\om,v)-\d_w\d_\la \F(\la,\om,0,0)\d_\la w_{k+1}(\la,\om,v).
\label{**}
\end{eqnarray}
Here $\d_w\d_\la^2\F(\la,\om,0,0) \in \LL(\tilde{\CC}_n^2 \cap \ker Q;\CC_n)$ is the derivative of the map $\la \in [-\delta,\delta_0]
\mapsto \d_\la\d_w\F(\la,\om,0,0) \in \LL(\tilde{\CC}_n^2 \cap \ker Q;\CC_n)$ or, the same, the derivative of the map $w \in \tilde{\CC}_n^2 \cap \ker Q
\mapsto \d_\la^2 \F(\la,\om,0,w) \in \CC_n$ at $w=0$ (cf. \reff{dla}).

We use the notation $w_k(\la)$,  $R_k(\la)$, 
$\d_w\F(\la)$, $\d_\la\d_w \F(\la)$, introduced above, and  $\d_\la^2\d_w\F(\la):=\d_w\d_\la^2\F(\la,\om,0,0)$.
Because of \reff{**} the candidate for $\d_\la^2w_{k+1}(\la)$ is
$$
\d_w\F(\la)^{-1}(\d_\la^2 R_k(\la)
-\d_w\d_\la\F(\la)\d_\la w_{k}(\la)-\d_w\d_\la\F(\la)\d_\la w_{k+1}(\la)-\d_w\d_\la^2\F(\la)w_{k}(\la)).
$$
In order to show that this candidate is really  $\d^2_\la w_{k+1}(\la)$, we use \reff{Feq3} and calculate
\begin{eqnarray*}
\lefteqn{
\d_w\F(\la+\mu)\Big[ \d_\la w_{k+1}(\la+\mu)-  \d_\la w_{k+1}(\la)}\nonumber\\
&&-\mu \d_w\F(\la)^{-1}\left(\d_\la^2 R_k(\la)-\d_w\d_\la\F(\la)\d_\la w_{k}(\la)-\d_w\d_\la\F(\la)\d_\la w_{k+1}(\la)-
\d_w\d_\la^2\F(\la)w_{k}(\la)\right)\Big]
\nonumber\\
&&= \Bigl[\d_\la R_k(\la+\mu)-  \d_\la R_k(\la)-\mu \d_w\F(\la+\mu) \d_w\F(\la)^{-1}\d_\la^2 R_k(\la)\Bigr] \nonumber \\
&&-\Big[\d_w\d_\la\F(\la+\mu)-\d_w\d_\la\F(\la)- \mu \d_w\F(\la+\mu) \d_w\F(\la)^{-1}\d_w\d_\la^2\F(\la)\Big]
w_{k}(\la)\nonumber\\
&&-\Big[\d_w\d_\la \F(\la+\mu)\left(w_{k}(\la+\mu)-w_{k}(\la)\right)-
\mu \d_w\F(\la+\mu) \d_w\F(\la)^{-1}\d_w\d_\la \F(\la)
\d_\la w_{k}(\la)\Big]\nonumber\\
&&-\Big[ \d_w\F(\la+\mu)- \d_w\F(\la)- \mu \d_w\F(\la+\mu) \d_w\F(\la)^{-1} \d_\la \d_w\F(\la+\mu)\Big]
w_{k+1}(\la).\nonumber
\end{eqnarray*}
The right-hand side is $o(\mu)$ in $\CC_n$ for $\mu \to 0$ (here we use \reff{lawest}). Hence, 
the same is true for the left-hand side, and Lemma \ref{Magnus} yields 
the claim.
\end{subproof}

\begin{cclaim}\label{cl:113}
The sequence $\d_{\la}^2w_k(\la,\om,v)$ converges  in $\CC_n$
for $k \to \infty$  uniformly with respect to  $\la,\om$, and $v$. 
\end{cclaim}

\begin{subproof}
It follows from  
\reff{Feq2} that
\begin{eqnarray*}
\lefteqn{
\d_w\F(\la,\om,0,0)\left(\d_\la^2 w_{k+1}(\la,\om,v)-\d_\la^2 w_{k}(\la,\om,v)\right)}\\
&&+2\d_w\d_\la \F(\la,\om,0,0)\left(\d_\la w_{k+1}(\la,\om,v)-\d_\la w_{k}(\la,\om,v)\right)\\
&&+\d_w\d_\la^2 \F(\la,\om,0,0)\left(w_{k+1}(\la,\om,v)-w_{k}(\la,\om,v)\right)\\
&&=-\d_w\F(\la,\om,v,w_k(\la,\om,v))\d_\la^2 w_{k}(\la,\om,v)-2\d_w\d_\la \F(\la,\om,v,w_k(\la,\om,v)) \d_\la w_{k}(\la,\om,v)\\.
&&-\d_\la^2 \F(\la,\om,v,w_k(\la,\om,v)).
\end{eqnarray*}
Therefore, $\d_\la^2 w_{k+1}(\la,\om,v)-\left(I-\d_w\F(\la,\om,0,0)^{-1}\d_w\F(\la,\om,v,w_k(\la,\om,v))\right)\d_\la^2 w_{k}(\la,\om,v)$
converges in $\CC_n$ for $k \to \infty$ uniformly with respect to $\la,\om$, and $v$. Hence, Lemma \ref{A2} yields the claim.
\end{subproof}

Similarly one shows that the remaining second order partial derivatives $\d_\om^2w_k(\la,\om,v)$ 
and $\d_\om \d_\la w_k(\la,\om,v)$
exist in $\CC_n$ for all $k=1,2,\dots$ and  that their sequences converge in $\CC_n$ for $k \to \infty$
uniformly with respect to $\la,\om$, and $v$.
\end{proof}

\begin{rem}
\label{Cinfty}
In fact the map $\hat{w}$ is not only $C^2$-smooth, as it is claimed in Lemma \ref{laregular}, but  $C^\infty$-smooth.
In order to prove this rigorously,  one has to handle ``higher order analogues'' of  formulas like  
\reff{*} and \reff{**}, which are getting more and more complicated.
\end{rem}

\section{The bifurcation equation}
\label{bifeq}
\renewcommand{\theequation}{{\thesection}.\arabic{equation}}
\setcounter{equation}{0} 
In this section we finish the proof of Theorem \ref{thm:hopf} by inserting the solution $w=\hat{w}(\la,\om,v)$
of the infinite dimensional part \reff{infinite} of the Liapunov-Schmidt system into the  finite dimensional part \reff{finite}
\beq
\label{bif}
P((I-C(\la,\om))(v+\hat{w}(\la,\om,v))-F(\la,\om,v+\hat{w}(\la,\om,v)))=0
\ee
and by solving this so-called bifurcation equation with respect to $\la \approx 0$ and $\om \approx 1$
for given small $v \not=0$.

\subsection{Local solution of \reff{bif}}
Because of Lemma \ref{kerim} the functions 
${\bf v_1}=\Re {\bf v}$ and   ${\bf v_2}=\Im {\bf v}$ constitute  a basis in $\ker L_0$. Hence, the variable $v \in \ker L_0$ can be represented as
$$
v=\Re(\zeta{\bf v}) \mbox{ with } \zeta \in \C.
$$
Moreover, because of ${\bf w_1}=\Re {\bf w}$ and   ${\bf w_2}=\Im {\bf w}$
and of the definition \reff{Pdef} of the projection $P$ we have $Pu=0$ for $u \in \CC_n$ if and only if $\langle u,{\bf \tilde{A}w}\rangle =0$.
Hence, \reff{bif} is equivalent to the
one-dimensional complex equation
\beq\label{3.4}
\Phi(\la,\om,\zeta) = 0
\ee
with
\begin{eqnarray*}
&&\Phi(\la,\om,\zeta):=\\ 
&&
\left\langle
\left(I-C(\la,\om)\right)\left(\Re(\zeta{\bf v})+\hat w(\la,\om,\Re(\zeta{\bf v}))\right)
-F\left(\la,\om,\Re(\zeta{\bf v})+\hat w(\la,\om,\Re(\zeta{\bf v}))\right), \tilde{A}{\bf w}
\right\rangle.
\end{eqnarray*}
Here and in what follows $\langle\cdot,\cdot\rangle: \C^n\times\C^n \to \C$ is the Hermitian scalar product in $\C^n$.
Remark that $\Phi(\la,\om,\cdot):\C \to \C$ is $C^\infty$-smooth in the sense of real differentiability, i.e., as a map from a two-dimensional real vector space into itself.
It follows from \reff{42} that
\beq
\label{sym}
e^{i\vphi}\Phi(\la,\om,\zeta) = \Phi\left(\la,\om,e^{i\vphi}\zeta\right).
\ee
Therefore, it suffices to solve (\ref{3.4})  for real $\zeta$. 
To shorten notation, we will write
$$
\bar w(\la,\om,r):=\hat w(\om,\la,r\Re{\bf v}) \mbox{ for } r \in \R.
$$
Moreover, because of $\hat w(\la,\om,0)=0$ we have  $\Phi(\la,\om,0)=0$ for all  $\la$ and  $\om$.
Hence, it suffices to solve the so-called reduced or scaled bifurcation equation
\beq\label{redbif}
\Psi(\la,\om,r)=0
\ee
for small $r \in \R$. Here the map $\Psi$ is defined by
$$
\Psi(\la,\om,r) := \left\{
\begin{array}{lcr}
\displaystyle \frac{1}{r}\Phi(\la,\om,r) &\mbox{for}& r\not=0,\\
\displaystyle \lim_{s \to 0}\frac{1}{s}\Phi(\la,\om,s) &\mbox{for}& r=0,
\end{array}
\right.
$$
which leads to
\begin{eqnarray*}
&&\Psi(\la,\om,r) = \int_0^1\d_\zeta\Phi(\la,\om,rs)\,ds\\
&&=\int_0^1\Bigl\langle \left(I-C(\la,\om)-\d_uF\left(\la,\om,rs\Re{\bf v}+\bar w(\la,\om,rs)\right)\right)
\left(\Re{\bf v}+\d_r\bar w(\la,\om,rs)\right),\tilde Aw\Bigr\rangle
\,ds.
\end{eqnarray*}
Because of Lemma \ref{infIFT}(ii) we have 
\beq
\label{dwzero}
\d_r\bar w(\la,\om,0)=0.
\ee
This together with Lemma \ref{kerim} yields
$\Psi(0,1,0)=0.$
In order to solve \reff{redbif} with respect to $\la$ and $\om$ close to the solution $\la=0$, $\om=1$, $r=0$ by means 
of the implicit function theorem we have to calculate the partial derivatives $\d_\la\Psi(0,1,0)$ and  $\d_\om\Psi(0,1,0)$.

For $\la \approx 0$ denote by $\hat{\nu}(\la)$ the eigenvalue close to $\nu=-i$ of the eigenvalue problem \reff{adevp}--\reff{adevpbc}
(which is the complex conjugate to the eigenvalue  close to $\mu=i$ of the eigenvalue problem \reff{evp}),
and let $\hat{w}(\la): [0,1]\to \C^n$ be the corresponding eigenfunction, normalized by (cf. \reff{normed})
$$
\sum\limits_{j=1}^n\int_0^1v_j^0(x)\overline{\hat{w}_j(\la)(x)}\,dx=2.
$$
Further, define ${\bf \hat{w}(\la)}: [0,1] \times \R \to \C^n$ by  ${\bf \hat{w}(\la)}(x,t):= \hat{w}(\la)(x) e^{-it}$.
Then we have $\hat{w}(0)=w^0$ and, hence,  ${\bf \hat{w}(0)}={\bf w}$. Because $\hat{w}(\la)$ satisfies the adjoint boundary conditions 
\reff{adevpbc}, the function ${\bf \hat{w}(\la)}$ satisfies the adjoint boundary conditions
\reff{adbc}. Moreover, $\left(I-C(\la,1)-\d_uF\left(\la,1,0\right)\right)\Re{\bf v}$
satisfies the boundary conditions \reff{eq:1.2}. Hence, we get from
Lemma \ref{AB} that
\begin{eqnarray*}
&&\left\langle \left(I-C(\la,1)-\d_uF\left(\la,1,0\right)\right)\Re{\bf v},\tilde A(\la,1){\bf \hat{w}(\la)}\right\rangle\\
&&= \left\langle  A(\la,1)\left(I-C(\la,1)-\d_uF\left(\la,1,0\right)\right)\Re{\bf v},{\bf \hat{w}(\la)}\right\rangle\\
&&= \left\langle \left(A(\la,1)+B(\la)\right)\Re{\bf v},{\bf \hat{w}(\la)}\right\rangle
= \left\langle \Re{\bf v}, \left(\tilde{A}(\la,1)+\tilde{B}(\la)\right){\bf \hat{w}(\la)}\right\rangle\\
&&= \left\langle \Re{\bf v}, \left(i+\nu(\la)\right){\bf \hat{w}(\la)}\right\rangle
=\overline{\nu(\la)}-i.
\end{eqnarray*}
But $\left(A(0,1)+B(0)\right)\Re{\bf v}=0$ (cf. Lemma  \reff{kerim}), hence
\begin{eqnarray*}
\d_\la\Psi(0,1,0)
& =& \frac{d}{d\la}\left\langle \left(I-C(\la,1)-\d_uF\left(\la,1,0\right)\right)\Re{\bf v},\tilde A{\bf w}\right\rangle\Big|_{\la=0}\\
& =& \frac{d}{d\la}\left\langle \left(I-C(\la,1)-\d_uF\left(\la,1,0\right)\right)\Re{\bf v},\tilde A(\la,1){\bf \hat{w}(\la)}\right\rangle\Big|_{\la=0}\\
& =& \left\langle \left(\d_\la A(0,1)+\d_\la B(0)\right)\Re{\bf v},{\bf w}\right\rangle\\
& =& \frac{1}{2}\sum_{j=1}^n\int_0^1\left(\d_\la a_j(x,0)v^0_j(x)+\sum_{k=1}^n\d_{u_k}\d_\la b_j(x,0,0)v^0_k(x)\right)\overline{w^0_j(x)} dx.
\end{eqnarray*}
In particular, it holds 
\beq
\label{trans}
\Re \d_\la\Psi(0,1,0)=\Re \nu'(0)=\alpha
\ee
(cf. \reff{eq:IV}).
Similarly we have 
\begin{eqnarray}
\d_\om\Psi(0,1,0)
& =& \frac{d}{d\om}\left\langle \left[I-C(0,\om)-\d_uF\left(0,\om,0\right)\right]\Re{\bf v},\tilde A{\bf w}\right\rangle\Big|_{\om=1}\nonumber\\
& =& \left\langle \d_\om A(0,1)\Re{\bf v},{\bf w}\right\rangle  = -i\left\langle  \Re{\bf v},{\bf w}\right\rangle  = -i.
\label{im}
\end{eqnarray}
Hence, the transversality condition \reff{eq:IV} yields
$$
\det\frac{\partial(\Re \Psi, \Im \Psi)}{\partial(\la,\om)}(0,1,0)= -\alpha\not=0,
$$
therefore the implicit function theorem works. Using, moreover, that \reff{sym} yields $\Phi(\la,\om,-\zeta)=-\Phi(\la,\om,\zeta)$ and, hence,  $\Psi(\la,\om,-r)=\Psi(\la,\om,r)$, we get
\begin{lemma}\label{lem:bif}
There exist $\eps_0>0$ and $\delta>0$ such that for all $r \in [0,\eps_0]$ there exists exactly one solution $\la=\hat{\la}(r) \in  [-\delta,\delta]$,  $\om=\hat{\om}(r)
\in  [1-\delta,1+\delta]$
to \reff{redbif}. Moreover, the map  $r \in [0,\eps_0] \mapsto (\hat{\la}(r),\hat{\om}(r)) \in \R^2$ is $C^2$-smooth, and it holds $\hat{\la}(0)=0$, $\hat{\om}(0)=1$, and 
\beq
\label{firstDer}
\hat{\la}'(0)=\hat{\om}'(0)=0.
\ee
\end{lemma}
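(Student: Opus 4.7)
The plan is to apply the classical implicit function theorem to the two-real-variable equation $\Psi(\la,\om,r)=0$, solving for $(\la,\om)$ as a $C^2$-function of the scalar $r$ near the reference point $(\la,\om,r)=(0,1,0)$. Three ingredients are needed: enough regularity of $\Psi$ near $(0,1,0)$; that $(0,1,0)$ is a zero of $\Psi$ with invertible partial Jacobian in $(\la,\om)$; and that the resulting branch has vanishing first derivatives at $r=0$.

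For regularity, I would observe that $\Psi(\la,\om,r)=\int_0^1\d_\zeta\Phi(\la,\om,rs)\,ds$ involves $\hat w$ and $\d_r\hat w$ evaluated at $(\la,\om,rs\,\Re\mathbf{v})$, and these enter through compositions with the smooth linear operators $I-C(\la,\om)$ and $\d_uF(\la,\om,\cdot)$ and the fixed pairing $\langle\cdot,\tilde A\mathbf{w}\rangle$. Lemma \reff{laregular} supplies the $C^2$-smoothness of $\hat w$ in $(\la,\om,v)$ with values in $\CC_n$, so differentiation under the integral sign in $s$ produces a $C^2$-map $\Psi$ of $(\la,\om,r)$. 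The base value $\Psi(0,1,0)=0$ is immediate from $\hat w(\la,\om,0)=0$, \reff{dwzero}, and Lemma \reff{kerim}, while the discussion preceding the lemma has already computed $\Re\d_\la\Psi(0,1,0)=\al$ and $\d_\om\Psi(0,1,0)=-i$; the real Jacobian in $(\la,\om)$ then has determinant $-\al$, nonzero by the transversality hypothesis \reff{eq:IV}. The implicit function theorem thereupon delivers the desired $\eps_0$, $\de$, and unique $C^2$-branches $\hat\la,\hat\om$ on $[0,\eps_0]$ satisfying $\hat\la(0)=0$ and $\hat\om(0)=1$.

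For the vanishing of the first derivatives, I would exploit the $SO(2)$-equivariance \reff{sym}: choosing $\vphi=\pi$ yields $\Phi(\la,\om,-\zeta)=-\Phi(\la,\om,\zeta)$, so $\Psi(\la,\om,\cdot)$ is even in $r$ and hence $\d_r\Psi(0,1,0)=0$. Differentiating the identity $\Psi(\hat\la(r),\hat\om(r),r)=0$ at $r=0$ then gives $\d_\la\Psi(0,1,0)\hat\la'(0)+\d_\om\Psi(0,1,0)\hat\om'(0)=0$, one complex (i.e., two real) linear equation in $\hat\la'(0),\hat\om'(0)$; the invertibility of the Jacobian already established forces $\hat\la'(0)=\hat\om'(0)=0$, which is \reff{firstDer}.

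The only mildly non-routine step is the $C^2$-regularity of $\Psi$, but this is really just Lemma \reff{laregular} combined with the Leibniz rule under the $s$-integral; no new analytical input is required beyond what the preceding sections establish. Everything else is a direct application of the implicit function theorem and the equivariance already encoded in the problem, so I expect the proof to be quite short.
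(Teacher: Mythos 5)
Your proposal is correct and follows essentially the same route as the paper: the lemma is obtained precisely by applying the classical implicit function theorem to $\Psi$ at $(0,1,0)$, using $\Psi(0,1,0)=0$, $\Re\d_\la\Psi(0,1,0)=\al$ and $\d_\om\Psi(0,1,0)=-i$ (so the real Jacobian determinant is $-\al\not=0$ by \reff{eq:IV}), together with the evenness $\Psi(\la,\om,-r)=\Psi(\la,\om,r)$ coming from \reff{sym} to deduce \reff{firstDer}. One small caution: your phrase about the ``smooth linear operators $I-C(\la,\om)$'' is imprecise, since $(\la,\om)\mapsto C(\la,\om)$ is not smooth in $\LL(\CC_n)$; the needed $C^2$-regularity of the scalar map $\Psi$ rests rather on Lemma \ref{laregular} combined with the $(x,t)$-smoothness of $\hat w$ from Lemma \ref{regular} and formulas of the type \reff{lader} — a point the paper itself also leaves implicit.
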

Now the local existence and uniqueness assertions of Theorem \ref{thm:hopf} are proved with
$$
[\hat{u}(\eps)](x,t)=\left[\eps\Re{\bf v}+\bar{w}\left(\hat{\la}(\eps),\hat{\om}(\eps),\eps\right)\right](x,t)=\eps\Re\left(e^{-it}v^0(x)\right)+O(\eps^2).
$$

\subsection{Bifurcation formula}

In this subsection we will prove the so-called bifurcation formula (cf. notation \reff{eq:IV} and \reff{coef})
\beq
\label{secondDer}
\hat{\la}''(0)=\frac{\beta}{\alpha}.
\ee
This formula determines the so-called bifurcation direction, i.e., if $\alpha \beta>0$, then the bifurcating periodic solutions to
(\ref{eq:1.1})--(\ref{eq:1.3}) exist for small $\la >0$, and,  if $\alpha \beta<0$, then they exist for small $\la <0$.
Moreover, it turns out that, as in the case of Hopf bifurcation for ODEs and parabolic PDEs, this formula determines the stability of the 
 bifurcating periodic solutions to
(\ref{eq:1.1})--(\ref{eq:1.3}): If  $\beta>0$ (the so-called supercritical case) and if the real parts of all eigenvalues $\mu \not=\pm i$ of \reff{evp} with $\la=0$
have negative real parts (this is a reinforcement of the nonresonance assumption \reff{nonres}),
then  the bifurcating periodic solutions to
(\ref{eq:1.1})--(\ref{eq:1.3}) are asymptotically orbitally stable,  
if $\beta<0$ (the so-called subcritical case) then they are unstable.
Remark that in the present paper we do not deal with the stability question because we do not have suitable criteria for nonlinear stability of time-periodic solutions to semilinear 
dissipative hyperbolic PDEs. This will be the topic for future work.

In the following calculations we will use a standard approach (cf., e.g., \cite[Chapter I.9]{Ki}) as well as its concrete realization for hyperbolic systems (see \cite{Gajewski}).

In order to prove \reff{secondDer}, we differentiate the identity $\Psi(\hat{\la}(r),\hat{\om}(r),r)=0$ twice with respect to $r$ at $r=0$.
Using \reff{firstDer}, we get 
$
\d_\la\Psi(0,1,0)\hat{\la}''(0)+\d_\om\Psi(0,1,0)\hat{\om}''(0)+\d_r^2\Psi(0,1,0)=0.
$
Hence \reff{trans} and \reff{im} yield
$
\alpha\hat{\la}''(0)+\mbox{Re}\,\d_r^2\Psi(0,1,0)=0.
$
On the other hand, 
differentiating the identity $r\Psi(\la,\om,r)=\Phi(\la,\om,r)$ three times with respect to $r$ at $\la=r=0$ and $\om=1$, we get
$
3\d_r^2\Psi(0,1,0)=\d_r^3\Phi(0,1,0).
$
Hence, in order to show \reff{secondDer} we have to show
\beq
\label{biform1}
\beta=- \frac{1}{3}\mbox{Re}\,\d_r^3\Phi(0,1,0).
\ee
To get a formula for $\d_r^3\Phi(0,1,0)$ we denote $\tilde{w}(r):=\bar w(0,1,r)$ and
calculate
\begin{eqnarray}
&&\Phi(0,1,r)=
\langle
\left(I-C(0,1)\right)\left(r\Re{\bf v}+\tilde{w}(r)\right)
-F\left(0,1,r\Re{\bf v}+\tilde{w}(r)\right), \tilde{A}{\bf w}
\rangle\nonumber\\
&&=
\left\langle
A\left(\left(I-C(0,1)\right)\left(r\Re{\bf v}+ \tilde{w}(r)\right)
-F\left(0,1,r\Re{\bf v}+\tilde{w}(r)\right)\right), {\bf w}
\right\rangle\nonumber\\
&&=
\left\langle
A\left(r\Re{\bf v}+\tilde{w}(r)\right)
-AF(0,1,r\Re{\bf v}+\tilde{w}(r)), {\bf w}
\right\rangle\nonumber\\
&&=
-\left\langle
B\left(r\Re{\bf v}+\tilde{w}(r)\right)
+AF(0,1,r\Re{\bf v}+\tilde{w}(r)), {\bf w}
\right\rangle.
\label{tildew}
\end{eqnarray}
Here we used that for all $u \in \CC_n$ the function $C(0,1)u
+F(0,1,u)$ satisfies the boundary conditions  \reff{eq:1.2} and 
that also ${\bf v}$ and $\tilde{w}(r)$ satisfy those  boundary conditions.
For  $\tilde{w}(r)$ this  follows from 
\begin{eqnarray}
&&P\left[(I-C(0,1))(r\Re{\bf v}+\tilde{w}(r))-F(0,1,r\Re{\bf v}+\tilde{w}(r))\right]\nonumber\\
&&=\sum_{k=1}^2 \langle((I-C(0,1))(r\Re{\bf v}+\tilde{w}(r))-F(0,1,r\Re{\bf v}+\tilde{w}(r))),\tilde{A} {\bf w_k} \rangle {\bf \tilde{v}_k}
\label{wbareq}
\end{eqnarray}
(cf. \reff{Pdef} and \reff{infinite}) and from the fact, that the functions $ {\bf \tilde{v}_k}$ satisfy the  boundary conditions  \reff{eq:1.2}
(cf. \reff{vtilde}.
Now we differentiate three times \reff{tildew} with respect to $r$ at $r=0$, use $B+A\d_u F(0)=0$ and get
\begin{eqnarray}
&&\d_r^3\Phi(0,1,0)\nonumber\\
&&=-\left\langle A\d_u^3F(0,1,0)(\Re{\bf v},\Re{\bf v},\Re{\bf v})+3A\d_u^2F(0,1,0)(\Re{\bf v},\tilde{w}''(0)), {\bf w}
\right\rangle.
\label{d3Phi}
\end{eqnarray}

Let us calculate $\tilde{w}''(0)$. For that we differentiate the identity \reff{wbareq} twice with respect to $r$ at $r=0$
and get
\begin{eqnarray}
&&P\left[(I-C(0,1)-\d_uF(0,1,0))\tilde{w}''(0)- \d_u^2F(0,1,0)( \Re{\bf v}, \Re{\bf v})\right]\nonumber\\
&&=-\sum_{k=1}^2\langle A\d_u^2F(0,1,0)( \Re{\bf v}, \Re{\bf v}),{\bf w_k}\rangle {\bf \tilde{v}_k}.
\label{d2weq}
\end{eqnarray}
But $[A \d_u^2F(0,1,0)( \Re{\bf v}, \Re{\bf v})](x,t)$ is of the type $c_1(x)e^{2it}+c_0(x)+c_{-1}(x)e^{-2it}$
(cf. \reff{Af}), and ${\bf w_k}(x,t)$ is of the type  $d_1(x)e^{it}+d_{-1}(x)e^{-it}$, 
hence $\langle A \d_u^2F(0,1,0)(\Re{\bf v}, \Re{\bf v}),{\bf w_k}\rangle=0$.
Applying $A$ to both sides of \reff{d2weq} and using Lemma \ref{AB},
we get
\beq
\label{ABeq1}
(A+B)\tilde{w}''(0)= A \d_u^2F(0,1,0)(\Re{\bf v}, \Re{\bf v}).
\ee
Because of \reff{Af} the $j$-th component of the right-hand side of \reff{ABeq1} calculated at the point $(x,t)$ is
\begin{eqnarray*}
&&-\sum_{k,l=1}^n b_{jkl}(x) \Re(v_k^0(x)e^{-it}) \Re(v_l^0(x)e^{-it})\\
&&=
-\frac{1}{4}\sum_{k,l=1}^n b_{jkl}(x)\left(v_k^0(x)v_l^0(x)e^{-2it}+2v_k^0(x)\overline{v_l^0(x)}+\overline{v_k^0(x)}\overline{v_l^0(x)}e^{2it}\right).
\end{eqnarray*}
Here we used notation \reff{bb}.
The  $j$-th component of the left-hand side of \reff{ABeq1} calculated at the point $(x,t)$ is
$$
\d_t\tilde{w}_j''(0)(x,t)+a_j(x,0)\d_x\tilde{w}_j''(0)(x,t)+\sum_{k=1}^nb_{jk}(x)\tilde{w}_k''(0)(x,t).
$$
We are going to solve \reff{ABeq1} with respect to $\tilde{w}''(0)$  by means of the ansatz 
$$
\tilde{w}_j''(0)(x,t)=y_j(x)e^{-2it}+z_j(x)+\overline{y_j(x)}e^{2it}
$$
with $C^1$-functions $y_j:[0,1] \to \C$ and  $z_j:[0,1] \to \R$ which satisfy the boundary conditions corresponding to \reff{eq:1.2}. 
Then  \reff{ABeq1} is equivalent to
\begin{eqnarray*}
a_j(x,0) \frac{d}{dx}y_j(x) - 2iy_j(x) + \sum_{k=1}^nb_{jk}(x)y_k(x) &=& -\frac{1}{4}\sum_{k,l=1}^nb_{jkl}(x)v_k^0(x)v_l^0(x),  \nonumber\\
 y_j(0)&=&\sum_{k=m+1}^n r_{jk} y_k(0), \ j=1,\dots,m \\
 y_j(1)&=&\sum_{k=1}^mr_{jk}y_k(1), j=m+1,\dots ,n, \nonumber
\end{eqnarray*}
\begin{eqnarray*}
a_j(x,0) \frac{d}{dx}z_j(x) + \sum_{k=1}^nb_{jk}(x)z_k(x) &=& -\frac{1}{2}\sum_{k,l=1}^nb_{jkl}(x)v_k^0(x)\overline{v_l^0(x)},\nonumber \\
 z_j(0)&=&\sum_{k=m+1}^n r_{jk} z_k(0),  j=1,\dots,m \\
 z_j(1)&=&\sum_{k=1}^mr_{jk}z_k(1), j=m+1,\dots ,n.
\end{eqnarray*}
In other words: The functions $y_j$ and $z_j$ have to be those as introduced in Section \ref{sec:results}.

Now we calculate the two terms in \reff{d3Phi}: The first term is
$$
-\left\langle A\d_u^2F(0,1,0)(\Re{\bf v},\Re{\bf v},\Re{\bf v}),{\bf w}\right\rangle=
\frac{3}{8}\sum_{j,k,l,r=1}^n\int_0^1 b_{jklr}v_k^0v_l^0\overline{v_r^0}\overline{w_j^0}dx.
$$
The second term is
$$
-3\left\langle A\d_u^2F(0,1,0)(\Re{\bf v},\tilde{w}''(0)),{\bf w}\right\rangle
=\frac{3}{2}\sum_{j,k,l=1}^n\int_0^1 b_{jkl}\left(v_k^0z_l+\overline{v_k^0}y_l\right)\overline{w_j^0}dx.
$$
Therefore, \reff{biform1} and \reff{d3Phi} yield the formula \reff{coef} of Section \ref{sec:results}.

\section{Example}
\label{Examples}
\renewcommand{\theequation}{{\thesection}.\arabic{equation}}
\setcounter{equation}{0}
In this section we present a simple and, hence, academic example of a problem of the type  
(\ref{eq:1.1})--(\ref{eq:1.3}). The aim is to show that the dissipativity condition
 \reff{Fred} does not contradict to the usual Hopf bifurcation assumptions
 \reff{smooth}--\reff{hyp}, \reff{u=0}, and  \reff{geo}--\reff{nonres} of Theorem \ref{thm:hopf}. For examples modelling processes in natural sciences  and technology see Section~1.2.
The equations depend, besides of the frequency parameter $\om$ and the bifurcation parameter $\la$, on an additional real parameter $\gamma$ which can be chosen in such a 
way that supercritical as well as subcritical Hopf bifurcation  occurs:
\beq
\label{example}
\begin{array}{l}
\om \d_tu_1-\d_xu_1+\la u_1-u_2+\ga u_1^3=\om \d_tu_2+\d_xu_2=0,\;x \in (0,1),\\
u_1(0,t)=0 , \quad u_1(1,t)=u_2(1,t).
\end{array}
\ee
Using the notation of Section \ref{sec:intr}, we have $m=1$, $n=2$, $a_1(x,\la)=-1$, $a_2(x,\la)=1$,  $b_1(x,\la,u_1,u_2)=\la u_1-u_2+\ga u_1^3$, $b_2(x,\la,u_1,u_2)=0$,
$r_{12}=0$, and $r_{21}=1$. 

Obviously, the conditions \reff{smooth}--\reff{hyp} and  \reff{u=0} are fulfilled. Condition  \reff{Fred} is fulfilled also because the choice $r_{12}=0$ implies $R_0=0$ (cf. \reff{Rdef0}).

In order to check the remaining conditions  \reff{geo}--\reff{nonres} we consider the eigenvalue problem 
(see \reff{evp})
\beq
\label{exampleevp}
\begin{array}{l}
-v_1'+(\la-\mu) v_1-v_2=v_2'-\mu v_2=0,\;x \in (0,1),\\
v_1(0)=v_1(1)-v_2(1)=0
\end{array}
\ee
and  the adjoint eigenvalue problem 
(see \reff{adevp})
\beq
\label{exampleadevp}
\begin{array}{l}
w_1'+(\la-\nu) w_1=-w_2'-w_1-\nu w_2=0,\;x \in (0,1),\\
w_2(0)=w_1(1)-w_2(1)=0.
\end{array}
\ee
It is easy to verify that there do not exist real eigenvalues $\mu$ to \reff{exampleevp} and that \reff{exampleevp} is equivalent to
\beq
\label{exampleevp1}
\begin{array}{l}
\displaystyle v_1(x)=\frac{c}{\la-2\mu}\left(e^{\mu x} -e^{(\la-\mu)x}\right),\; v_2(x)=ce^{\mu x}, \\
e^{\la-2\mu}=2\mu-\la+1.
\end{array}
\ee
Here $c=v_2(0)$ is a nonzero complex constant.
Setting $\la-2\mu=a+ib$ with $a \in \R$ and (without loss of generality) $b>0$, we get 
$$
b=\sqrt{e^{2a}-(1-a)^2}
$$ 
and
\beq
\label{chareq}
\sin \sqrt{e^{2a}-(1-a)^2}=- \sqrt{1-e^{-2a}(1-a)^2}.
\ee
It is easy to see that equation \reff{chareq} has (besides of the solution $a=0$) a countable number of solutions $0<a_0<a_1\ldots$ tending to $\infty$.
Hence, the spectrum  
of \reff{exampleevp} consists of countably many geometrically simple eigenvalues
$$
\mu_j^\pm(\la)=\frac{1}{2}\left(\la-a_j\pm i\sqrt{e^{2a_j}-(1-a_j)^2}\right).
$$
If $\lambda=a_0$, then the eigenvalue pair $\mu^\pm_0(a_0)$ is on the imaginary axis:
$$
\pm i\om_0:=\mu_0^\pm(a_0)=\pm\frac{i}{2}\sqrt{e^{2a_0}-(1-a_0)^2}.
$$
The real parts of all other eigenvalues are negative,
hence the nonresonance condition \reff{nonres} is fulfilled.
Obviously, the transversality condition \reff{eq:IV} is fulfilled, since
$$
\al=\frac{d}{d\la}\mbox{Re}\mu^\pm_0(\la)|_{\la=a_0}=\frac{1}{2}.
$$

In order to check \reff{alg},  we first calculate $w_1(x)$ and 
 $w_2(x)$ using \reff{exampleadevp}
and the fact that $\nu=\bar \mu$. 
We get 
$$
w_1(x)=de^{(-i\om_0-a_0)x},\; 
w_2(x)=\frac{d}{a_0+2i\om_0}\left(e^{-(i\om_0+a_0) x} - e^{i\om_0 x}\right).
$$
Here $d=w_1(0)$ is again a nonzero complex constant.
Hence,
\begin{eqnarray}
\int_0^1\left(v_1\overline{w}_1+v_2\overline{w}_2\right)\, dx&=&
\frac{2c\bar{d}}{a_0-2i\om_0}\left(\int_0^1e^{(2i\om_0-a_0)x}dx-1\right)\nonumber\\
&=&
\frac{2c\bar{d}}{(a_0-2i\om_0)^2}\left(e^{a_0-2i\om_0}-e^{2i\om_0-a_0}\right)\not=0.
\label{Null0}
\end{eqnarray}
Here we used that 
\beq
\label{Null}
e^{a_0-2i\om_0}=2i\om_0-a_0+1
\ee
(cf. \reff{exampleevp1}).

In order to calculate $\beta$ we have to normalize $v_1,v_2,w_1$ and $w_2$ according to \reff{normed}. Using \reff{Null0}, we get
\beq
\label{Null1}
c\bar{d}=\frac{(a_0-2i\om_0)^2}{e^{a_0-2i\om_0}-e^{2i\om_0-a_0}}.
\ee
Further, we have $b_{jkl}=0$ for all  indices $j,k$, and $l$, $b_{1111}=6\gamma$ and $b_{jklr}=0$ if one of the 
indices $j,k,l,r$ is not equal to one. 
Hence,
$$
\beta=-\frac{3\ga}{4}\Re\int_0^1v_1^0v_1^0\overline{v_1^0}\overline{w_1^0}\,dx.
$$
Moreover,  using \reff{Null} and \reff{Null1}, we get
$$
\left|v_1^0(x)\right|^2v_1^0(x)\overline{w_1^0(x)}= 
\frac{|c|^2(e^{a_0-2i\om_0}+e^{2a_0})}{|a_0-2i\om_0|^2|1+e^{a_0-2i\om_0}|^2}\left|1-e^{(a_0-2i\om_0)x}\right|^2\left(1-e^{(2i\om_0-a_0)x}\right).
$$
Equation  \reff{Null} yields also
$$
\int_0^1\left|1-e^{(a_0-2i\om_0)x}\right|^2\left(1-e^{(2i\om_0-a_0)x}\right)dx=
5+\frac{e^{2a_0}-1}{2a_0}+\frac{e^{4i\om_0}-1}{4i\om_0}+e^{2i\om_0-a_0}
$$
and
\begin{eqnarray*}
&&\mbox{Re}\left(\left(e^{a_0-2i\om_0}+e^{2a_0}\right)\left(5+\frac{e^{2a_0}-1}{2a_0}+\frac{e^{4i\om_0}-1}{4i\om_0}+e^{2i\om_0-a_0}\right)\right)\\
&&=\left(e^{2a_0}-a_0+1\right)\left(5+\frac{e^{2a_0}-1}{2a_0}\right).
\end{eqnarray*}
Therefore,
$$
\beta=-\frac{3\ga|c|^2(e^{a_0}-a_0+1)}{4|a_0-2i\om_0|^2|1+e^{a_0-2i\om_0}|^2}\left(5+\frac{e^{2a_0}-1}{2a_0}\right).
$$
Now, because of $a_0>0$ it follows $\beta >0$ (supercritical Hopf bifurcation) for $\ga<0$ and  $\beta <0$ (subcritical Hopf bifurcation) for $\ga>0$.

\appendix
\section{
%Appendix
}
\label{Appendix}
\renewcommand{\theequation}{{\thesection}.\arabic{equation}}
\setcounter{equation}{0}
In this appendix we present a simple linear version of the so-called fiber contraction principle
(see, e.g., \cite[Section 1.11.3]{Chicone}):

\begin{lemma}
\label{A2}
Let $U$ be a Banach space and $u_1,u_2,\ldots \in U$ a converging sequence. Further, let $A_1,A_2,\ldots \in \LL(U)$
be a sequence of linear bounded operators on $U$ such that there exists $c<1$ such that for all $u \in U$ it holds
\beq
\label{a1}
\|A_n u\| \le c \|u\| \mbox{ for all } n=1,2,\ldots
\ee
and \beq
\label{a2}
A_1u , A_2u,\ldots \,\,\mbox{\rm converges in  }  U.
\ee
Finally, let $v_1,v_2,\ldots \in U$ be a sequence such that for all $n=1,2,\ldots$ we have 
\beq
\label{a3}
v_{n+1}=A_nv_n+u_n.
\ee
Then the sequence  $v_1,v_2,\ldots$ converges in $U$.
\end{lemma}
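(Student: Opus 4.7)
The plan is to produce an explicit candidate for the limit of $v_n$, and then show convergence by a standard one-step contraction estimate.

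First I would exploit assumption \reff{a2} to define the pointwise limit operator. For each $u \in U$ set $A^*u := \lim_{n \to \infty} A_n u$; this limit exists by \reff{a2}, and $A^*$ is linear as a pointwise limit of linear maps. From \reff{a1} we get $\|A^*u\| \le c\|u\|$, so $A^* \in \LL(U)$ with $\|A^*\|_{\LL(U)} \le c < 1$. Let $u^* := \lim_{n \to \infty} u_n$. Since $I - A^*$ is invertible on $U$ by a Neumann series argument (because $\|A^*\|_{\LL(U)} < 1$), there exists a unique $v^* \in U$ with
$$
v^* = A^*v^* + u^*.
$$

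Next I would derive the error recursion. Subtracting the fixed point equation from \reff{a3} gives
$$
v_{n+1} - v^* = A_n(v_n - v^*) + (A_n v^* - A^* v^*) + (u_n - u^*),
$$
so, by \reff{a1},
$$
\|v_{n+1} - v^*\| \le c\,\|v_n - v^*\| + \eps_n, \qquad \eps_n := \|A_n v^* - A^* v^*\| + \|u_n - u^*\|.
$$
By the choice of $A^*$ and the hypothesis $u_n \to u^*$, we have $\eps_n \to 0$.

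Finally I would run the usual estimate for such a perturbed geometric recursion. Given $\eta > 0$, choose $N$ so large that $\eps_n < \eta(1-c)/2$ for all $n \ge N$. Iterating the inequality yields, for every $k \ge 0$,
$$
\|v_{N+k} - v^*\| \le c^k \|v_N - v^*\| + \sum_{j=0}^{k-1} c^{k-1-j} \eps_{N+j} \le c^k \|v_N - v^*\| + \frac{\eta}{2},
$$
and the first term tends to zero as $k \to \infty$ since $c < 1$. Hence $\|v_n - v^*\| \to 0$. I do not expect any serious obstacle; the only mild point is checking that $A^*$ inherits the uniform contraction bound from \reff{a1}, which is immediate from lower semicontinuity of the norm under pointwise convergence.
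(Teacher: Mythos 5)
Your proposal is correct and follows essentially the same route as the paper: define the pointwise limit operator $A^*$ (which inherits the bound $c<1$ from \reff{a1}), take $v^*$ as the unique fixed point of $v=A^*v+u^*$, subtract to get the perturbed recursion $\|v_{n+1}-v^*\|\le c\|v_n-v^*\|+\eps_n$ with $\eps_n\to 0$, and conclude by the standard geometric-iteration estimate. The only cosmetic difference is that the paper iterates the inequality all the way back to $r_1$ and then splits the resulting sum, whereas you start the iteration at a large index $N$; both are the same argument.
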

\begin{proof}
Because of \reff{a2} there exists $A \in \LL(U)$ such that for all $u \in U$ we have $A_nu \to Au$ in $U$ for $n \to \infty$.
Moreover, \reff{a1} yields that $\|Au\| \le c\|u\|$ for all $u \in U$. Because of $c < 1$ there exists exactly one $v \in U$ such that
$v=Av+u$, where $u \in U$ is the limit of the sequence $u_1,u_2,\ldots \in U$.

Let us show that $v_k \to v$ in $U$  for $k \to \infty$.
Because of \reff{a3} we have 
\beq
\label{a}
v_{n+1}-v
=A_n(v_{n}-v)+\left(A_n-A\right)v+u_{n}-u.
\ee
Using the notation
$r_{n}:=\|v_n-v\|$ and 
$s_{n}:=\|\left(A_{n}-A\right)v\| + \|u_{n}-u\|$,
we get from \reff{a1} and \reff{a}
\beq
r_{n+1} \le c r_{n} +s_{n} \le c^2  r_{n-1}+cs_{n-1}+s_{n}
\le \ldots\le c^{n}r_{1}+\sum_{m=0}^n c^{n-m} s_{m}.
\label{riteration}
\ee
Take $s_0>0$ such 
that $s_{n} \le s_0$ for all $n$.
Then \reff{riteration} implies for $0 \le l \le n$
\beq
r_{n+1} 
\le c^n \left(r_1+s_0\sum_{m=0}^l c^{-m}\right)+\frac{1}{1-c}\max_{m>l}s_{m}.
\label{riteration1}
\ee
But we have
\beq
\label{max}
\max_{m>l}s_{m} \to 0 \mbox{ for } l \to \infty.
\ee
Given $\eps>0$, fix $l\in\N$ sufficiently large, in order to meet the 
estimate
\beq
\label{max1}
\frac{1}{1-c}\max_{m>l}s_{m} <\frac{\eps}{2}.
\ee
Moreover, choose $n_0\in\N$ so large that 
\beq
\label{sum}
 c^n \left(r_1+s_0\sum_{m=0}^l c^{-m}\right) <\frac{\eps}{2} \mbox{ for all } n\ge n_0.
\ee
Then \reff{riteration1}--\reff{sum} yield that $|r_{n+1}|<\eps$ for all $ n\ge n_0$, as desired.
\end{proof}

\section*{Acknowledgments}
The first author  was supported by the Alexander von Humboldt Foundation.  
Both authors acknowledge support of the DFG Research Center {\sc Matheon} mathematics for key technologies (project D8).

\end{document}